\title[Test of Vandiver's conjecture with Gauss sums]
{Test of Vandiver's conjecture with  \\ Gauss sums -- Heuristics} 
\author{Georges Gras}
\address{Villa la Gardette, 4 Chemin Ch\^ateau Gagni\`ere, 
F--38520, Le Bourg d'Oisans.}
\email{g.mn.gras@wanadoo.fr}
\keywords{Cyclotomic fields; Vandiver's conjecture; Gauss sums; Jacobi sums;
Kummer theory; Stickelberger's theorem; Class field theory; $p$-ramification}
\subjclass[2010]{11R18, 11L05, 11R37, 11R29, 08-04}
\newtheorem{theorem}{Theorem}[section]
\newtheorem{lemma}[theorem]{Lemma}
\newtheorem{corollary}[theorem]{Corollary}
\newtheorem{proposition}[theorem]{Proposition}
\newtheorem{hypothesis}[theorem]{Hypothesis}
\theoremstyle{definition}
\newtheorem{definition}[theorem]{Definition}
\theoremstyle{remark}
\newtheorem{remark}[theorem]{Remark}
\newenvironment{pushright}{\begin{itemize}
\item[\hspace{-12pt}]}{\end{itemize}} 
\newcommand{\order}{\raise1.5pt \hbox{${\scriptscriptstyle \#}$}}
\newcommand{\too}{\relbar\mathrel{\mkern-4mu}\rightarrow}
\newcommand{\ds}{\displaystyle}
\newcommand{\Q}{\mathbb{Q}}
\newcommand{\Z}{\mathbb{Z}}
\newcommand{\F}{\mathbb{F}}
\newcommand{\virg}{\raise 2pt \hbox{,}\,\,}
\newcommand{\Cl}{{\mathcal C}\hskip-2pt{\ell}}
\newcommand{\cl}{c\hskip-1pt{\ell}}
\newcommand{\plus}{\ds\mathop{\raise 2.0pt \hbox{$\bigoplus $}}\limits}
\newcommand{\prd}{\ds\mathop{\raise 2.0pt \hbox{$  \prod   $}}\limits}
\newcommand{\sm}{\ds\mathop{\raise 2.0pt \hbox{$  \sum    $}}\limits}
\newcommand{\ev}{\emptyset}
\newcommand{\ul}{\underline}
\newcommand{\wt}{\widetilde}
\newcommand{\ov}{\overline}
\begin{document} 
 
\date{June 25, 2019}

\begin{abstract}  
The link between Vandiver's conjecture and Gauss sums is 
well known since the papers of Iwasawa (1975), 
Thaine (1995-1999) and Angl\`es--Nuccio (2010).
This conjecture is required in many subjects and
we shall give such examples of relevant references.
In this paper, we recall our interpretation of Vandiver's conjecture in terms of 
minus part of the torsion of the Galois group of the maximal abelian 
$p$-ramified pro-$p$-extension of the $p$th cyclotomic field (1984).
Then we provide a specific use of Gauss sums of characters of 
order $p$ of $\F_\ell^\times$ and prove new criteria for Vandiver's 
conjecture to hold (Theorem \ref{first}\,(a) using 
both the sets of exponents of $p$-irregularity and of $p$-primarity of suitable 
twists of the Gauss sums, and Theorem \ref{first}\,(b) which does not need
the knowledge of Bernoulli numbers or cyclotomic units).
We propose in \S\,\ref{new} new heuristics showing that any 
counterexample to the conjecture leads to excessive constraints 
modulo $p$ on the above twists as $\ell$ varies and suggests analytical 
approaches to evidence. We perform numerical experiments to strengthen 
our arguments in direction of the very probable truth of Vandiver's conjecture. 
All the calculations are given with their PARI/GP programs.
\end{abstract}

\maketitle

\tableofcontents

\section{Introduction}
Let $K=\Q(\mu_p^{})$ be the field of $p$th roots of unity for a given prime $p>2$
and let $K_+$ be its maximal real subfield. Put $G :={\rm Gal\,}(K/\Q)$. 

\smallskip
We denote by $\Cl$ and 
$\Cl_+$ the $p$-class groups of $K$ and $K_+$,
then by $\Cl_-$ the relative $p$-class group, so that $\Cl = \Cl _+ \oplus \Cl_-$.

\smallskip
Let $E$ and $E_+$ be the groups of units of  $K$ and $K_+$;
then $E= E_+ \oplus \mu_p^{}$ (Kummer).

\smallskip
The conjecture of Vandiver (or Kummer--Vandiver) asserts that $\Cl_+$ is trivial.
This statement is equivalent to say that the group of real cyclotomic units of $K$
is of prime to $p$ index in $E_+$ \cite[Theorem 8.14]{Wa}. One may refer to numerical 
tables using this property in \cite{BH,HHO} (verifying the conjecture up to
$2\cdot 10^9$), and to more general results in \cite{T1,T2} where some relations 
with Gauss and Jacobi sums are used to express the order of the isotypic 
components of $\Cl_+$ (e.g., \cite[Theorem 4]{T1}).

\smallskip
Many heuristics are proposed about this conjecture; 
see Washington's book \cite[\S\,8.3, Corollary 8.19]{Wa}
for some history, criteria, and for probabilistic arguments,
then \cite{Mi} assuming Greenberg's conjecture \cite{Gre1}
for $K_+$.

\smallskip
We have also given a probabilistic study in \cite[II.5.4.9.2]{Gr1}. 
All these heuristics lead to the fact that the number of 
primes $p$ less than $x$, giving a counterexample,
can be of the form $O(1)\cdot {\rm log}({\rm log}(x))$.

\smallskip
These reasonings, giving the possible existence of 
infinitely many counterexamples to Vandiver's conjecture, are 
based on standard probabilities associated with the
Borel--Cantelli heuristic, but many recent $p$-adic heuristics and
conjectures (on class groups and units) may contradict 
such unfounded approaches. 

\smallskip
In this paper, we shall work in another direction, in the framework of ``abelian 
$p$-ramifi\-cation'', using Gauss sums together with the ``Main Theorem on 
abelian fields'' restricted to $\Cl_-$, and giving the order of its isotypic 
components by means of generalized Bernoulli numbers
(this aspect is related by Ribet in \cite{R1,R2} and we shall 
call it ``Main Theorem'' for short).

\smallskip
Such a link of Vandiver's conjecture with Gauss sums 
and abelian $p$-ramification has been given first by 
Iwasawa \cite{Iw}, then by Angl\`es--Nuccio \cite{AN}, and encountered by 
many authors in various directions (Iwasawa's theory, 
Galois cohomology, Fermat curves, Galois representations,...), 
then often assuming Vandiver's conjecture 
(e.g., \cite{DP,Gre2, Gre3,Ich,IK,KM,Sha0,Sha1,Sha2,Shu,WE1,WE2}).

\smallskip
This link does exist also in the context of the classical conjecture of 
Greenberg \cite{Gre1} considered as a generalization 
of Vandiver's conjecture (e.g., \cite{McC}, \cite{Gr7}).
We propose, in Section \ref{pram}, to explain the links 
with $p$-ramification and prove again the reflection theorem
(Theorem \ref{reflection0} and Corollary \ref{reflection}).

\smallskip
Then we shall interpret a counterexample to Vandiver's conjecture
in terms of non-trivial ``$p$-primary pseudo-units'' 
stemming from Gauss sums:

\smallskip
\centerline{$\tau(\psi) = -\sm_{x \in \F_\ell^\times} \psi(x)\,\xi_\ell^x$,}

for $\psi$ of order $p$, $\xi_\ell$ of prime order $\ell \equiv 1 \pmod p$.
Indeed, if $\order \Cl_+ \equiv 0 \pmod p$,
there exists a class $\gamma = \cl({\mathfrak A}) \in \Cl_-$, of
order $p$, such that ${\mathfrak A}^p = (\alpha)$, with $\alpha$
$p$-primary (to give the unramified extension 
$K(\sqrt[p]{\alpha})/K$, decomposed over $K_+$ into a cyclic
unramified extension $L_+/K_+$ of degree $p$ predicted by class field 
theory); the reciprocal being obvious.

\smallskip
Since $\alpha$ can be obtained explicitely by means of twists
(giving products of Jacobi sums) of the above Gauss sums:
\begin{equation}\label{twist}
{\rm g}_c(\ell) = \tau(\psi)^{c - \sigma_c} \in K, 
\end{equation} 

with Artin automorphisms $\sigma_c$ attached to a primitive 
root $c$ modulo $p$, 
this will yield the main test verifying the validity of the conjecture 
at $p$; this result is the object of the Theorem \ref{thmp},
Corollary \ref{casvide} and Theorem \ref{N}, 
that we can summarize, in the Theorem \ref{first}
below, after the reminder of some notations and classical definitions.

\smallskip
\begin{definition}\label{definit}
(i) Let $\zeta_p$ be a primitive $p$th root of unity.
 We denote by $\omega$ the Teichm\"uller character of $G$ (the 
 $p$-adic character with values in $\mu_{p-1}^{}(\Q_p)$ such that 
$\zeta_p^s=\zeta_p^{\omega(s)}$ for all $s\in G$).

\smallskip
The irreducible $p$-adic characters of $G$ are the
$\theta = \omega^m$, $1 \leq m \leq p-1$.

\smallskip
(ii) Let $e_\theta :=  \hbox{$\frac{1}{p-1}$} \sm_{s \in G} \theta(s^{-1})\, s$ be
the associated idempotents in $\Z_p[G]$.

\smallskip
(iii) Let ${\rm g}_c(\ell)_ \theta$ denotes the $\theta$-component of the twist 
${\rm g}_c(\ell)$ defined by  \eqref{twist}, as representative in $K^\times$ of the 
class of $\ov {{\rm g}_c(\ell)}^{\, e_\theta^{}} \in K^{\times}/K^{\times p}$.
\end{definition}

\begin{theorem}[Main theorem]\label{first}
For a prime $\ell \equiv 1 \pmod p$, let
${\mathscr E}_\ell(p)$ be the set of exponents of $p$-primarity 
of $\ell$ (even integers $n \in [2, p-3]$, such that 
${\rm g}_c(\ell)_{\omega^{p-n}} \equiv 1 \pmod p$).
Then let ${\mathscr E}_0(p)$ be the set of exponents of 
$p$-irregularity of $K$ (even integers $n \in [2, p-3]$, 
such that $p$ divides the $n$th Bernoulli number $B_n$).

\smallskip
\quad (a) Vandiver's conjecture holds for $K$ if and only if
there exists $\ell\equiv 1\! \pmod p$ such that
${\mathscr E}_\ell(p) \cap {\mathscr E}_0(p)=\ev$.

\smallskip
\quad (b) Vandiver's conjecture holds for $K$ if and only if
there exist $N\geq 1$ primes $\ell_i \equiv 1 \!\pmod p$ such that 
$\bigcap_{i=1}^N{\mathscr E}_{\ell_i}(p)=\ev$. 
\end{theorem}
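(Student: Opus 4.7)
Plan for the proof of Theorem \ref{first}.

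The argument rests on three pillars: the reflection theorem (Theorem \ref{reflection0}), which recasts Vandiver's conjecture as the vanishing of certain $\omega^{p-n}$-isotypic components of the $p$-primary pseudo-unit group of $K$; the Main Theorem, which pinpoints the components where $\Cl_-^{\omega^{p-n}} \neq 0$ as exactly those indexed by $n \in {\mathscr E}_0(p)$; and Stickelberger's theorem, which shows that the twist ${\rm g}_c(\ell) = \tau(\psi)^{c-\sigma_c}$ of \eqref{twist} is a pseudo-unit in $K^\times$ whose ideal is a $p$-th power, so that its $\omega^{p-n}$-component represents a genuine class in $\Cl_-^{\omega^{p-n}}$. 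By Kummer theory, the Kummer extension $K\bigl(\sqrt[p]{{\rm g}_c(\ell)_{\omega^{p-n}}}\bigr)/K$ is unramified outside $p$, and unramified at $p$ (hence everywhere) precisely when ${\rm g}_c(\ell)_{\omega^{p-n}} \equiv 1 \pmod p$, i.e., when $n \in {\mathscr E}_\ell(p)$.

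For the forward direction of (a), assume Vandiver. For $n \notin {\mathscr E}_0(p)$ there is nothing to check: $\Cl_-^{\omega^{p-n}}$ is trivial, the twist is a $p$-th power, and membership in ${\mathscr E}_\ell(p)$ is irrelevant. For $n \in {\mathscr E}_0(p)$, the $p$-primary subspace of the $\omega^{p-n}$-pseudo-unit group is trivial under Vandiver. By Chebotarev density one selects a single $\ell \equiv 1 \pmod p$ whose Frobenius acts non-trivially on each of the finitely many relevant Hilbert class field pieces, so that the class of ${\rm g}_c(\ell)_{\omega^{p-n}}$ is non-zero for every such $n$; being non-zero it cannot be $p$-primary, so $n \notin {\mathscr E}_\ell(p)$. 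This yields ${\mathscr E}_\ell(p) \cap {\mathscr E}_0(p) = \ev$.

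For the backward direction of (a), assume there exists $\ell$ with ${\mathscr E}_\ell(p) \cap {\mathscr E}_0(p) = \ev$ but, for contradiction, that Vandiver fails at some even $n_0$. Reflection (as recalled in the introduction) then forces $n_0 \in {\mathscr E}_0(p)$ and produces a non-trivial $p$-primary pseudo-unit in the $\omega^{p-n_0}$-component. The key claim, established in the earlier Theorem \ref{thmp} and Corollary \ref{casvide}, is that under Vandiver failure the Gauss sum twist ${\rm g}_c(\ell)_{\omega^{p-n_0}}$ necessarily lies in the $p$-primary subspace for \emph{every} $\ell$: either its class in $\Cl_-^{\omega^{p-n_0}}$ is trivial, in which case the twist is a $p$-th power and trivially $\equiv 1 \pmod p$, or it is a non-trivial class, in which case the structure of the $\omega^{p-n_0}$-pseudo-unit group modulo $p$-th powers---whose $p$-primary part exhausts the whole component under Vandiver failure, by reflection---forces $p$-primarity. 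Either way $n_0 \in {\mathscr E}_\ell(p)$, a contradiction. Part (b) follows immediately: $(\Rightarrow)$ apply (a) with $N=1$; $(\Leftarrow)$ any Vandiver failure at $n_0$ places $n_0$ in ${\mathscr E}_{\ell_i}(p)$ for every $i$ by the same mechanism, hence in $\bigcap_{i=1}^N {\mathscr E}_{\ell_i}(p)$, contradicting its emptiness.

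The main obstacle is the ``forced $p$-primarity'' claim used in the backward direction, namely that Vandiver failure at $n_0$ makes ${\rm g}_c(\ell)_{\omega^{p-n_0}}$ $p$-primary regardless of $\ell$. This is the heart of the Gauss sum link developed throughout the paper, interpreted through the abelian $p$-ramification framework of Section \ref{pram}; once it is in hand, the rest is an orchestration of reflection, the Main Theorem, Stickelberger, and Chebotarev density.
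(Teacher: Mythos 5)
Your sketch of part (a) is essentially the paper's argument, but part (b) contains a genuine gap. The forward direction of (b) does not follow by ``applying (a) with $N=1$'': (a) produces an $\ell$ with ${\mathscr E}_\ell(p)\cap{\mathscr E}_0(p)=\ev$, whereas (b) needs $\bigcap_{i}{\mathscr E}_{\ell_i}(p)=\ev$, i.e.\ \emph{every} even $n\in[2,p-3]$ --- including the regular indices $n\notin{\mathscr E}_0(p)$ --- must be excluded from some ${\mathscr E}_{\ell_i}(p)$. For such regular $n$ your parenthetical claim that ``the twist is a $p$-th power'' is false: there $b_c(\chi^*)$ is a $p$-adic unit, so the ideal ${\mathfrak L}_{\chi^*}^{b_c(\chi^*)}$ of ${\rm g}_c(\ell)_{\chi^*}$ is not even the $p$-th power of an ideal, and the twist may nevertheless be $p$-primary (the paper's example $p=11$, $\ell=23$, $n=2$, and the many tabulated nonempty sets ${\mathscr E}_\ell(p)$ at regular indices). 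If your claim were true, every regular $n$ would lie in every ${\mathscr E}_\ell(p)$ and the criterion of (b) could never be satisfied. This is exactly why the paper's Theorem \ref{N} requires a separate argument for the components with $b_c(\chi_0^*)\not\equiv 0\pmod p$: assuming some $n_0$ lies in $\bigcap_{\ell}{\mathscr E}_\ell(p)$, the extensions $K(\sqrt[p]{{\rm g}_c(\ell)_{\chi_0^*}})/K$ are $\ell$-ramified and decomposed over $K_+$; comparing $L_+(\ell)$ with the ray class field of modulus $(\ell)$ over $K_+$ and using $\Cl_+=1$ forces the generating real unit $\varepsilon_{\chi_0^{}}$ to be a local $p$-th power at every $\ell\not\equiv 1\pmod{p^2}$, and a Chebotarev choice of $\ell$ inert in $K(\sqrt[p]{\varepsilon_{\chi_0^{}}})$ (worked inside the compositum with $\Q(\mu_{p^2})$) yields the contradiction. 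Nothing in your plan supplies this step, which is the real content of (b).

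A secondary weakness sits in your backward direction of (a). The claim that, when Vandiver fails at $n_0$, the $p$-primary classes ``exhaust'' the $\chi_0^*$-component of pseudo-units ``by reflection'' is more than reflection gives: Corollary \ref{reflection} yields only ${\rm rk}_p(\Cl_{\chi_0^*})={\rm rk}_p(\Cl_{\chi_0^{}})+\delta$ with $\delta\in\{0,1\}$, and when $\delta=1$ (non-cyclic $\Cl_{\chi_0^*}$) the $p$-primary subspace has codimension one and your dichotomy stalls. The paper closes that case with the Main Theorem order formula $\order\Cl_{\chi_0^*}\sim b_c(\chi_0^*)$ (Remark \ref{remfond}): non-cyclicity forces the exponent of $\Cl_{\chi_0^*}$ strictly below the $p$-part of $b_c(\chi_0^*)$, so ${\rm g}_c(\ell)_{\chi_0^*}$ is a global $p$-th power for every $\ell$ and $n_0\in{\mathscr E}_\ell(p)$ anyway; this is precisely the reduction to Hypothesis \ref{hypo} that your sketch omits. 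Also, invoking Theorem \ref{thmp} as ``established earlier'' is circular, since Theorem \ref{thmp} is part (a) of the statement being proved; your forward direction of (a) itself, on the other hand, does match the paper's proof (one Chebotarev prime whose class components generate all the $\Cl_{\omega^{p-n_i}}$, $n_i\in{\mathscr E}_0(p)$, so that the corresponding twists cannot be $p$-primary under Vandiver).
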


\medskip
Test (b) is numerically very frequent for $N=1$
or $N$ very small, {\it and does not need the 
knowledge of Bernoulli's numbers}; in fact, it does not need
to know if $p$ is irregular or not (see Theorem \ref{N}).

\smallskip
We show that some assumption of {\it independence}, of the 
congruential properties (mod~$p$) of these twists, 
as $\ell$ varies, is an obstruction to any counterexample 
to Vandiver's conjecture.

\smallskip
This method is different from that needed to prove that some cyclotomic 
unit is not a global $p$th power, which does not give obvious probabilistic 
approach (nevertheless, see \S\,\ref{resymbol} for some complements).

\smallskip
Finally, we propose, in \S\S\,\ref{new}, \ref{add}, new heuristics (to our knowledge) 
and give substantial numerical experiments confirming them.

\begin{definition}\label{ND}
(i) We denote by ${\mathscr X}_+$ the set of even characters
$\theta \ne 1$ (i.e., $\theta = \omega^m$, $m \in [2, p-3]$ even), and by
${\mathscr X}_-$ the set of odd characters distinct from $\omega$
 (i.e., $\theta = \omega^m$, $m \in [3, p-2]$ odd).

\smallskip
If $\theta =\omega^m$, we put $\theta^* := 
\omega \theta^{-1}= \omega^{p-m}$. This defines an involution
on the group of characters which applies ${\mathscr X}_+$ onto
${\mathscr X}_+^* = {\mathscr X}_-$.

\smallskip
(ii) For a finitely generated $\Z_p[G]$-module $M$, we put
$M_\theta := M^{e_\theta}$. The operation of the complex 
conjugation $s_{-1} \in G$ gives rise to the obvious definition of the 
components $M_+$ and $M_-$ such that $M=M_+ \oplus M_-$.

\smallskip
(iii) We denote by ${\rm rk}_p(A) := {\rm dim}_{\F_p}(A/A^p)$ the $p$-rank 
of any abelian group $A$.

\smallskip
(iv) For $\alpha \in K^\times$, prime to $p$, considered 
modulo $K^{\times p}$, we denote by $\alpha_\theta$ 
a representative in $K^{\times}$ of the class $\ov \alpha^{\,e_\theta} \in
K^{\times}/K^{\times p}$ (e.g., $\alpha_\theta = \alpha^{e'_\theta}$ where 
$e'_\theta \in \Z[G]$ approximates $e_\theta$ mod $p$).

\smallskip
(v) Let $I$ be the group of prime to $p$ ideals of $K$.
For any ${\mathfrak A}  \in I$ such that 
$\cl({\mathfrak A}) \in \Cl$, there exists an approximation 
$e'_\theta \in \Z[G]$ of $e_\theta$ modulo a sufficient high power 
of $p$ such that ${\mathfrak A}_\theta := {\mathfrak A}^{e'_\theta}$ 
is defined up to a principal ideal of the form $(x^p)$, $x \in K^\times$.

\smallskip
(vi) We say that ${\mathfrak A} \in I$ is $p$-principal if it is principal
in $I \otimes \Z_p$; thus ${\mathfrak A} = (\alpha)$, with 
$\alpha \in K^\times \otimes \Z_p$, defined up to the product by
$\varepsilon \in E \otimes \Z_p$.\,\footnote{The distinction between
${\mathfrak A}^{e_\theta} \in I \otimes \Z_p$ and ${\mathfrak A}^{e'_\theta} \in I$
($e'_\theta \equiv e_\theta \pmod {p^N\Z_p[G]}$, $N$ large enough)
has some importance in practice and programming, provided of a 
definition of ${\mathfrak A}^{e'_\theta}$ up to a principal ideal of the 
form $(x^p)$, for deciding, for instance in the writing
${\mathfrak A}^{e_\theta} =: (\alpha_\theta^{})$, of the``$p$-primarity'' of 
$\alpha_\theta^{} \in K^\times \otimes \Z_p$;
whence ${\mathfrak A}^{e'_\theta} =: (\alpha' )$ where
$\alpha_\theta^{} \cdot \alpha'{}^{-1} \in (K^\times \otimes \Z_p)^p \cdot E \otimes \Z_p$.
This will be used for $\theta \in {\mathscr X}_+^*$ where $\theta$-components of
units do not intervenne, giving $(\alpha_\theta) = (x)^p \Leftrightarrow
\alpha_\theta \in K^{\times p}$.}

(vii) For $\chi =: \omega^n \in {\mathscr X}_+$, let
$B_{1, \,(\chi^*)^{-1}} = B_{1,\,\omega^{n-1}} := 
\frac{1}{p} \sm_{a=1}^{p-1} ({\chi^*})^{-1}(s_a)\, a$ be
the generalized Bernoulli number of character $ ({\chi^*})^{-1}$
(where $s_a \in G$ is the Artin automorphism attached to $a$; it is the restriction
of the Artin automorphism $\sigma_a$ defined above in larger extensions).

\smallskip
The Bernoulli number $B_{1,\,\omega^{n-1}}$ is an element of $\Z_p$ 
congruent modulo $p$ to $\frac{B_n}{n}$, 
where $B_n$ is the $n$th ordinary Bernoulli number; see 
\cite[Proposition 4.1, Corollary 5.15]{Wa}.

\smallskip
(viii) We say that a finitely generated $\Z_p[G]$-module $M$ is monogenous
if it is generated, over $\Z_p[G]$, by a single element; this is equivalent to 
${\rm rk}_p(M_\theta) \leq 1$ for all irreducible $p$-adic character 
$\theta$ of $G$.
\end{definition}

The index of $p$-irregularity $i(p)$ is the number of even $n \in [2, p-3]$ 
such that $B_n \equiv 0 \pmod p$; thus $i(p) = \order {\mathcal E}_0(p)$.
See \cite[\S\,5.3 \ \&\  Exercise 6.6]{Wa} giving statistics and the heuristic 
$i(p) = O\big(\frac{{\rm log}(p)}{{\rm log}({\rm log}(p))} \big)$.

\smallskip
For a general history of Bernoulli--Kummer--Herbrand--Ribet,
then Mazur--Wiles--Thaine--Kolyvagin--Rubin--Greither works on cyclotomy
see \cite{Gr00,R2,Wa}; in this context, if for $\theta \in {\mathscr X}_-$,
$B_{1, \,\theta^{-1}}$ is of $p$-valuation $e$, we shall have (Main Theorem):

\centerline{$\order \Cl_\theta = \big\vert B_{1, \,\theta^{-1}} \big \vert_p^{-1} = p^e$.}

\section{Pseudo-units -- Notion of $p$-primarity}

\begin{definition}\label{psu}
(i) We call {\it pseudo-unit} any $\alpha \in K^\times$, prime to $p$,
such that $(\alpha)$ is the $p$th power of an ideal of $K$.

\smallskip
(ii) We say that an arbitrary $\alpha \in  K^{\times}$,
prime to $p$, is {\it $p$-primary}
if the Kummer extension $K(\sqrt [p] {\alpha}\,)/K$
is unramified at the unique prime ideal ${\mathfrak p}$ 
above $p$ in $K$ (but possibly ramified elsewhere).
\end{definition}

\begin{remark}
(i) Let $A$ be the group of pseudo-units of $K$. If $\alpha \in A$, there
exists an ideal ${\mathfrak a}$ such that $(\alpha) = {\mathfrak a}^p$;
then if we associate with $\alpha K^{\times p}$ the class of ${\mathfrak a}$,
we obtain the exact sequence, where ${}_p\Cl := \{\gamma \in \Cl,\ \, \gamma ^p=1\}$:
$$1 \too E/E^p \too A K^{\times p}/K^{\times p} \too {}_p\Cl \too 1, $$
giving ${\rm dim}_{\F_p} (A K^{\times p}/K^{\times p}) = \frac{p-1}{2}+ {\rm rk}_p(\Cl)$. 
Thus the computation of
${\rm dim}_{\F_p} \big((A K^{\times p}/K^{\times p})_\theta\big)$
is immediate from the value of ${\rm rk}_p(\Cl_\theta)$ and 
${\rm dim}_{\F_p} (\big(E/E^p)_\theta\big)=1$
(resp. $0$) if $\theta \in {\mathscr X}_+ \cup \{\omega\}$ 
(resp. $\theta \in {\mathscr X}_+^* \cup \{1\}$).

\smallskip
(ii) The general condition of $p$-primarity 
for any $\alpha \in K^\times$ ($\alpha$ prime to $p$ but not necessarily
a pseudo-unit) is ``\,$\alpha$ congruent to a $p$th power
modulo ${\mathfrak p}^{p} = (p) \,{\mathfrak p}$\,''
(e.g., \cite[Ch. I, \S\,6, (b), Theorem 6.3]{Gr1}). Since in
any case (replacing $\alpha$ by $\alpha^{p-1}$)
we can assume $\alpha \equiv 1 \pmod {\mathfrak p}$,
the above condition is then
equivalent to $\alpha \equiv 1 \pmod {{\mathfrak p}^{p}}$
(indeed, for any $x \equiv 1 \pmod {\mathfrak p}$ 
we get $x^p \equiv 1 \pmod {{\mathfrak p}^p}$).
\end{remark}

For the pseudo-units of $K$, the $p$-primarity may be 
characterized as follows:

\begin{proposition} Let $\alpha \in K^\times$ be a pseudo-unit. 
Then $\alpha$ is $p$-primary if and only if it is a local $p$th power at~${\mathfrak p}$.
\end{proposition}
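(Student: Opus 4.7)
The direction ``local $p$th power $\Rightarrow$ $p$-primary'' is essentially trivial and I would dispatch it first: if $\alpha \in (K_\mathfrak{p}^\times)^p$, then $K_\mathfrak{p}(\sqrt[p]{\alpha})=K_\mathfrak{p}$, so the completion of the global Kummer extension $K(\sqrt[p]{\alpha})$ at any prime above $\mathfrak{p}$ is trivial, and in particular unramified at $\mathfrak{p}$.

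For the converse, my plan is to set $L:=K(\sqrt[p]{\alpha})$, show that $L/K$ is unramified at \emph{every} prime of $K$, and then exploit the principality of $\mathfrak{p}$ via class field theory. At each prime $\mathfrak{q}\neq \mathfrak{p}$, the pseudo-unit hypothesis $(\alpha)=\mathfrak{a}^p$ forces $v_\mathfrak{q}(\alpha)\equiv 0\pmod p$; since $K$ contains $\mu_p$ and $\mathfrak{q}$ has residue characteristic $\neq p$, standard Kummer theory gives unramifiedness at $\mathfrak{q}$. At $\mathfrak{p}$ itself, unramifiedness is precisely the $p$-primarity hypothesis. Hence $L/K$ is unramified everywhere.

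I would then invoke the crucial observation that $\mathfrak{p}=(1-\zeta_p)$ is a principal ideal of $K$. By Artin reciprocity applied to the everywhere-unramified abelian extension $L/K$ (a subextension of the Hilbert $p$-class field of $K$), the Frobenius ${\rm Frob}_\mathfrak{p}\in{\rm Gal}(L/K)$ is the image of the ideal class $[\mathfrak{p}]=1\in \Cl$ and is therefore trivial. Combined with unramifiedness at $\mathfrak{p}$, this says $\mathfrak{p}$ splits completely in $L$: for any prime $\mathfrak{P}$ of $L$ above $\mathfrak{p}$, $L_\mathfrak{P}=K_\mathfrak{p}$. This forces $\sqrt[p]{\alpha}\in K_\mathfrak{p}$, i.e., $\alpha\in (K_\mathfrak{p}^\times)^p$, as required.

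The only genuine obstacle is understanding why the pseudo-unit hypothesis cannot be dropped: without it, a globally $p$-primary $\alpha$ could very well realize the unique nontrivial \emph{unramified} local extension of $K_\mathfrak{p}$ of degree $p$, in which case $\alpha$ would fail to be a local $p$th power. The pseudo-unit assumption, together with the fact that $\mathfrak{p}$ is principal in $K=\Q(\mu_p)$, is exactly what rules this possibility out globally; once one spots this, the rest of the argument is a clean invocation of the principal-ideal-splits-completely consequence of class field theory.
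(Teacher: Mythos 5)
Your proof is correct and follows essentially the same route as the paper: the pseudo-unit hypothesis makes $K(\sqrt[p]{\alpha})/K$ unramified everywhere, hence inside the $p$-Hilbert class field, and the principality of ${\mathfrak p}=(1-\zeta_p)$ forces its Frobenius to be trivial, so ${\mathfrak p}$ splits completely and $\alpha$ is a local $p$th power. Your write-up merely makes explicit the intermediate steps (the valuation argument at ${\mathfrak q}\ne{\mathfrak p}$ and the passage from complete splitting to $\alpha\in K_{\mathfrak p}^{\times p}$) that the paper leaves implicit.
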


\begin{proof} 
One direction is trivial. Suppose that $K(\sqrt [p]{\alpha}\,)/K$ is 
unramified at ${\mathfrak p}$; since $\alpha = {\mathfrak a}^p$, 
this extension is unramified as a global 
extension and is contained in the $p$-Hilbert class field $H$ of $K$. 
The Frobenius automorphism in $H/K$ of the {\it principal ideal}
${\mathfrak p}=(\zeta_p - 1)$ is trivial; so ${\mathfrak p}$ totally splits 
in $H/K$, thus in $K(\sqrt [p] {\alpha}\,)/K$, proving the proposition.
\end{proof}

When $\alpha$ is not necessarily a pseudo-unit, we have a similar
result provided we only look at the $p$-primarity of 
$\alpha_\theta$ for $\theta \ne 1, \omega$:

\begin{proposition}\label{varpi}
Let $\alpha \equiv 1 \pmod {\mathfrak p}$. 
Let $m \in [2,p-2]$ and let $\alpha_\theta$ for $\theta = \omega^m$. Then 
$\alpha_\theta \equiv 1 \pmod {{\mathfrak p}^m}$; moreover $\alpha_\theta$ 
is $p$-primary if and only if $\alpha_\theta \equiv 1 \pmod p$, in which case
one gets $\alpha_\theta \equiv 1 \pmod {{\mathfrak p}^{m+p-1} = (p) {\mathfrak p}^m}$.
\end{proposition}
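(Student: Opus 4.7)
The plan is to work in the completion $K_\mathfrak{p}$ and exploit the filtration of principal units $U^{(i)}:=1+\mathfrak{p}^i\subset K_\mathfrak{p}^\times$. Since $\mathfrak{p}=(\zeta_p-1)$ and $s(\zeta_p-1)\equiv\omega(s)(\zeta_p-1)\pmod{\mathfrak{p}^2}$ for every $s\in G$, the $\F_p[G]$-isomorphism $U^{(i)}/U^{(i+1)}\cong\mathfrak{p}^i/\mathfrak{p}^{i+1}$ given by $1+x\mapsto x$ shows that $G$ acts on the graded piece $U^{(i)}/U^{(i+1)}$ through the character $\omega^i$. The entire argument is then one repeated observation: for $\theta=\omega^m$ and any $\beta\in U^{(i)}$ fixed by $e_\theta$, the image of $\beta$ in $U^{(i)}/U^{(i+1)}$ lies in the $\omega^m$-part of an $\omega^i$-eigenspace and therefore vanishes whenever $i\not\equiv m\pmod{p-1}$, so $\beta\in U^{(i+1)}$.

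For the first assertion, view $\alpha_\theta$ as $\alpha^{e_\theta}\in U^{(1)}$ (the latter being a pro-$p$ $\Z_p[G]$-module, on which $e_\theta$ operates). Apply the observation above successively at $i=1,2,\ldots,m-1$; all of these levels satisfy $i\not\equiv m\pmod{p-1}$ when $m\in[2,p-2]$, so one lands in $U^{(m)}$, i.e.\ $\alpha_\theta\equiv 1\pmod{\mathfrak{p}^m}$.

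For the second assertion, one direction is immediate from $\mathfrak{p}^p=(p)\mathfrak{p}\subset(p)$. For the converse, assume $\alpha_\theta\equiv 1\pmod p$, that is $\alpha_\theta\in U^{(p-1)}$, and iterate the same observation at $i=p-1,p,\ldots,m+p-2$: for $m\in[2,p-2]$ none of these $i$ satisfies $i\equiv m\pmod{p-1}$, since the next value above $m$ with $\omega^i=\omega^m$ is $m+(p-1)$, which lies just past the range. This pushes $\alpha_\theta$ into $U^{(m+p-1)}$, whence $\alpha_\theta\equiv 1\pmod{\mathfrak{p}^{m+p-1}=(p)\mathfrak{p}^m}$ and in particular $p$-primarity. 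The only bookkeeping point—and the nearest thing to an obstacle—is that $\alpha_\theta$ is literally $\alpha^{e'_\theta}$ for an integral approximation $e'_\theta\equiv e_\theta\pmod{p^N}$ rather than $\alpha^{e_\theta}$; however, $(U^{(1)})^{p^N}\subset U^{(1+N(p-1))}$, so for any $N$ with $1+N(p-1)>m+p-1$ the two representatives agree beyond the filtration levels appearing in the statement, and all congruences transfer without modification.
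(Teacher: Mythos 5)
Your proof is correct and follows essentially the same route as the paper: both arguments pin down the $\mathfrak{p}$-adic valuation of $\alpha_\theta-1$ modulo $p-1$ by using the fact that $G$ acts on the $i$th graded piece of the principal-unit filtration through $\omega^i$ (the paper packages this with the Dwork uniformizer $\varpi$, an exact $\omega$-eigenvector, writing $\alpha_\theta=1+\varpi^k u$ and forcing $k\equiv m \pmod{p-1}$, rather than stepping through the quotients $U^{(i)}/U^{(i+1)}$ with $\zeta_p-1$). Your final remark handling the replacement of $e_\theta$ by an integral approximation $e'_\theta$ is a welcome extra precision that the paper leaves implicit.
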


\begin{proof}
Consider the Dwork uniformizing parameter $\varpi$ in $\Z_p[\mu_p^{}]$
which has the following properties:

\smallskip
(i) $\varpi^{p-1} = -p$,

\smallskip
(ii) $s (\varpi) = \omega(s) \cdot \varpi$, for all $s \in G$.

\medskip
Put $\alpha_{\theta} =1+ \varpi^k u$, 
where $u$ is a unit of  $\Z_p[\varpi]$ and $k \geq 1$; let
$u_0 \in \Z \setminus  p\,\Z$ be such that $u \equiv u_0 \pmod \varpi$
giving $\alpha_{\theta} \equiv 1 + \varpi^k u_0 \pmod  {\varpi^{k+1}}$.
Since $\alpha_{\theta}^{s} = \alpha_{\theta}^{\theta(s)}$ in 
$K^\times \otimes \Z_p$, we get, for all $s \in G$:
$$1 + s (\varpi^k)\, u_0 = 1 + \omega^k(s) \,\varpi^k u_0 
\equiv (1 + \varpi^k u_0)^{\theta(s)} 
  \equiv 1 +  \omega^m(s) \,\varpi^k u_0  \pmod {\varpi^{k+1}},$$

which implies $k \equiv m \!\pmod {p-1}$ and
$\alpha_{\theta} = 1 + \varpi^k \,u$,
$k \in \{m, m+p-1, \ldots\}$; whence the first claim.
The $p$-primarity condition for $\alpha_\theta$ is 
$\alpha_\theta \equiv 1 \pmod {\varpi^p}$ giving the obvious 
direction ``$\alpha_\theta$ $p$-primary $\Rightarrow$ $\alpha_\theta \equiv 1 \pmod p$''
since $(\varpi^p) = (p\, \varpi)$.

\smallskip
Suppose $\alpha_\theta \equiv 1 \pmod {\varpi^{p-1}}$;
so $k=m$ does not work in the writing $\alpha_{\theta} =1+ \varpi^k u$
since $m \leq p-2$, and necessarily $k$ is at least 
$m+p-1\geq p+1$, because $m\geq 2$ (which is also the local $p$th power condition).
\end{proof}

\section{Abelian $p$-ramification} 
Let's give an overview of the theory of abelian $p$-ramification, 
which is not our main purpose, but the natural framework for 
Vandiver's conjecture and Gauss sums.

\subsection{Vandiver's conjecture and abelian 
$p$-ramification}\label{pram}

Let $U$ be the group of principal local units at $p$ of $K$ and let 
$\ov E$ be the closure of the image of $E$ in~$U$.
Let ${\mathcal T}$ be the torsion group of the Galois group of
the maximal abelian $p$-ramified (i.e., unramified outside $p$)
pro-$p$-extension $H^{\rm pr}$ of $K$. This extension contains
the $p$-Hilbert class field $H$ and the compositum $\wt K$ of the 
$\Z_p$-extensions of $K$.
In the case of $K=\Q(\mu_p)$, the theory is summarized by the 
following exact sequences (since Leopoldt's conjecture 
holds for abelian fields):
\begin{equation*}\label{sef}
\begin{aligned}
&1 \too  {\rm tor}_{\Z_p}^{} \big(U \big / \ov E \big) 
\too {\mathcal T}  \too \wt{\Cl} \too 1 \\
& 1\too  {\rm tor}_{\Z_p}^{} (U) / {\rm tor}_{\Z_p}^{}(\ov E)= 1 \too
 {\rm tor}_{\Z_p}^{} \big(U \big / \ov E \big)  
\mathop {\too}^{{\rm log}} {\mathcal R}  \too 0,
\end{aligned}
\end{equation*}

where $\wt{\Cl} \subseteq \Cl$ corresponds, by class 
field theory, to the subgroup ${\rm Gal (H/H \cap \wt K)}$, and where
${\mathcal R} := {\rm tor}_{\Z_p}^{}\big({\rm log}\big 
(U \big) \big / {\rm log} (\ov E) \big)$ is 
the normalized $p$-adic regulator \cite[Proposition 5.2]{Gr5}.
Taking the $\theta$-components, we obtain the exact 
sequences (where ${\mathcal R}_\theta = 1$ for all odd $\theta$):
$$1 \too {\mathcal R}_\theta \too {\mathcal T}_\theta \too \wt\Cl_\theta \too 1.$$

For more information, see \cite{Gr1,Gr2,Gr5}.
We then have
${\rm Gal}(H^{\rm pr}/K) \simeq \Gamma \oplus {\mathcal T} 
\simeq  \Z_p^{\frac{p+1}{2}} \oplus {\mathcal T}$
where $\Gamma := {\rm Gal}(\wt K/K)$ is such that 
$\Gamma_+ = \Gamma_1 \simeq \Z_p$ and $\Gamma_- \simeq \Z_p[G]_-$ 
giving $\Gamma_\theta \simeq \Z_p$ for all odd $\theta$. 

\smallskip
Write ${\mathcal T} = {\mathcal T}_+ \oplus {\mathcal T}_-$ and
define $H^{\rm pr}_- \subseteq H^{\rm pr}$ 
(fixed by ${\rm Gal}(H^{\rm pr}/K)_+$), then
$H^{\rm pr}_+ \subseteq H^{\rm pr}$
(fixed by ${\rm Gal}(H^{\rm pr}/K)_-$). Thus
${\rm Gal}(H^{\rm pr}_+/K) \simeq \Z_p \oplus {\mathcal T}_+$
and ${\rm Gal}(H^{\rm pr}_-/K) \simeq \Z_p^{\frac{p-1}{2}}
\oplus {\mathcal T}_-$.

\smallskip
One defines in the same way the fields 
$H^{\rm pr}_\theta$ for which ${\rm Gal}(H^{\rm pr}_\theta/K)
\simeq \Gamma_\theta \oplus {\mathcal T}_\theta$
(reduced to ${\mathcal T}_\theta$, finite, for all 
$\theta \in {\mathscr X}_+$). We have $H_\theta \subset 
H^{\rm pr}_\theta$ in terms of components of~$H$.

\smallskip
Note that $H^{\rm pr}_+/K$ is decomposed
over $K_+$ to give the maximal abelian $p$-ramified 
pro-$p$-extension of $K_+$. 

\begin{theorem}\label{reflection0}\label{spiegel}
For all irreducible $p$-adic character $\theta$ of $K$, we have
${\rm rk}_p({\mathcal T}_{\theta^*}) = {\rm  rk}_p(\Cl_\theta)$.
\end{theorem}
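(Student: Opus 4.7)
My plan is to prove the reflection theorem via Kummer theory applied to the maximal elementary abelian $p$-ramified $p$-extension of $K$, exploiting the fact that $K$ already contains $\mu_p$. Let $L$ denote the compositum of all cyclic extensions $K(\sqrt[p]{\alpha})/K$ that are unramified outside $\mathfrak{p}$; by Kummer theory, $L=K(\sqrt[p]{V})$ for a radical $V\subseteq K^\times/K^{\times p}$, and simultaneously $L$ is the maximal $p$-elementary quotient of $H^{\rm pr}/K$, so that ${\rm Gal}(L/K)\simeq (\Gamma\oplus{\mathcal T})/p$ by the structure sequence recalled above. The Kummer pairing $V\times {\rm Gal}(L/K)\to\mu_p$ is $G$-equivariant and perfect, and since $\mu_p$ carries the character $\omega$, it induces $\dim_{\F_p}{\rm Gal}(L/K)_{\theta^*}=\dim_{\F_p}V_\theta$ for every irreducible $p$-adic character $\theta$, with the twist $\theta\mapsto\theta^*=\omega\theta^{-1}$.

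Next, I would describe $V$ explicitly. An element $\ov\alpha\in K^\times/K^{\times p}$ lies in $V$ iff $v_{\mathfrak q}(\alpha)\equiv 0\pmod p$ for every prime ${\mathfrak q}\nmid p$, i.e.\ $(\alpha)={\mathfrak a}^p\,{\mathfrak p}^{v_{\mathfrak p}(\alpha)}$ with ${\mathfrak a}$ prime to $p$. The resulting map
$$\phi:V\too {}_p\Cl\oplus\Z/p\Z,\qquad \ov\alpha\longmapsto\bigl(\cl({\mathfrak a}),\ v_{\mathfrak p}(\alpha)\bmod p\bigr),$$
is well defined (using $p\cdot\cl({\mathfrak a})=-v_{\mathfrak p}(\alpha)\cdot\cl({\mathfrak p})=0$ since ${\mathfrak p}=(\zeta_p-1)$ is principal), its kernel is $EK^{\times p}/K^{\times p}\simeq E/E^p$, and it is surjective because ${\mathfrak a}^p=(\beta)$ yields $\phi(\ov\beta)=(\cl({\mathfrak a}),0)$ and $\phi(\ov{\zeta_p-1})=(0,1)$. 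The $\Z/p\Z$ summand has trivial $G$-action because $\ov{\zeta_p-1}$ is $G$-invariant modulo units (the ratio $(\zeta_p^a-1)/(\zeta_p-1)$ is a cyclotomic unit). Taking $\theta$-components (exact, since $|G|$ is prime to $p$) gives
$$\dim_{\F_p}V_\theta\;=\;\dim_{\F_p}(E/E^p)_\theta\;+\;{\rm rk}_p(\Cl_\theta)\;+\;\delta_{\theta,1}.$$

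Combining the two computations yields
$${\rm rk}_p(\Gamma_{\theta^*})+{\rm rk}_p({\mathcal T}_{\theta^*})=\dim_{\F_p}(E/E^p)_\theta+{\rm rk}_p(\Cl_\theta)+\delta_{\theta,1},$$
and the theorem will follow by checking that the $(\Gamma,E)$-contributions cancel case by case, using the standard isotypic dimensions: $\dim(E/E^p)_\theta=1$ exactly when $\theta\in{\mathscr X}_+\cup\{\omega\}$ (Dirichlet's unit theorem for $E_+$ and the $\mu_p$-summand), and ${\rm rk}_p(\Gamma_{\theta^*})=1$ exactly when $\theta^*$ is odd or $\theta^*=1$, i.e.\ when $\theta$ is even or $\theta=\omega$. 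A four-case inspection ($\theta=1$, $\theta=\omega$, $\theta\in{\mathscr X}_+$, $\theta\in{\mathscr X}_-$) shows that the surplus contributions always match, leaving ${\rm rk}_p({\mathcal T}_{\theta^*})={\rm rk}_p(\Cl_\theta)$.

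The delicate step is bookkeeping the Kummer twist together with the one-dimensional correction coming from $\mu_p\subset E$ and from the class of the uniformizer $\ov\pi$: the $\ov\pi$-factor inflates $V_1$ by one (matching the cyclotomic $\Z_p$-extension contribution in $\Gamma_\omega$), while the $\mu_p$-factor inflates $V_\omega$ by one (matching $\Gamma_1$). Verifying equivariance of the Kummer pairing with the explicit action $s(\zeta_p)=\zeta_p^{\omega(s)}$ is the only point requiring care; everything else is a direct tally of isotypic ranks.
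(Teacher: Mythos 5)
Your proposal is correct and takes essentially the same route as the paper's own (sketched) proof: Kummer duality for the maximal elementary $p$-ramified extension, with the radical computed from $E/E^p$, ${}_p\Cl$ and the class of the uniformizer, then an isotypic rank comparison against ${\rm Gal}(H^{\rm pr}/K)\simeq\Gamma\oplus{\mathcal T}$ — the paper merely performs this count component by component instead of via your global exact sequence. One cosmetic slip only: the $\Z_p$-extension paired with the $\ov\pi$-line in $V_1$ is $\Gamma_\omega$, which is \emph{not} the cyclotomic one (the cyclotomic $\Z_p$-extension is $\Gamma_1$, paired with the $\mu_p$-line in $V_\omega$), but this mislabeling does not affect your rank bookkeeping.
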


\begin{proof}
We will give an outline of this famous reflection result as follows
from classical Kummer duality between radicals and Galois groups 
(see, e.g., \cite[Theorem I.6.2 \& Corollary I.6.2.1]{Gr1}),
using the fact that $K(\sqrt[p]{\beta})/K$, $\beta \in K^\times$,
is $p$-ramified if and only if
$(\beta) = {\mathfrak p}^e \cdot {\mathfrak A}^p$, $e \geq 0$,
${\mathfrak A} \in I$.  We shall have to take the
$\theta$ or $\theta^*$-components for each object considered in
$K^\times \otimes \Z_p$, $I  \otimes \Z_p$\,$\ldots\,$, modulo $p$th powers:

\smallskip
Let $\theta$ be even.
The Kummer radical of the compositum of the cyclic extensions 
of degree $p$ of $K$, contained in $H^{\rm pr}_{\theta^*}$, is generated 
(modulo $K^{\times p}$) by the part $E_{\theta}$ of real units, 
giving a $p$-rank $1$ for $\theta \ne 1$ (and $0$ for $\theta=1$),
by $p$ (of character $1$), and by the pseudo-units $\alpha_\theta$ 
comming from the elements of order $p$ of $\Cl_\theta$, 
which gives a radical of $p$-rank $1+{\rm rk}_p(\Cl_\theta)$. 
Since ${\rm rk}_p({\rm Gal}(H^{\rm pr}_{\theta^*}/K))=
1+{\rm rk}_p({\mathcal T}_{\theta^*})$, we get
${\rm rk}_p({\mathcal T}_{\theta^*}) = {\rm rk}_p(\Cl_\theta)$.
Similarly, we have ${\rm rk}_p({\mathcal T}_{\theta}) = {\rm rk}_p(\Cl_{\theta^*})$.
\end{proof}

\begin{corollary}\label{reflection}
One has ${\mathcal T}_1={\mathcal T}_{\omega} = \Cl_\omega=\Cl_1=1$ and
for all $\chi \in {\mathscr X}_+$, we have ${\mathcal R}_{\chi*}=1$ and
${\mathcal T}_{\chi*} = \wt \Cl_{\chi*} \subseteq \Cl_{\chi*}$, which 
establishes the Hecke reflection theorem
or Leopoldt spiegelungssatz ${\rm  rk}_p(\Cl_{\chi*}) = 
{\rm  rk}_p(\Cl_{\chi}) + \delta_\chi$, $\delta_\chi \in \{0,1\}$
since $\Gamma_{\chi*} \simeq \Z_p$
(particular case of \cite[Theorem II.5.4.5, 5.4.9.2]{Gr1}).
\end{corollary}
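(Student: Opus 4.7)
The plan is to combine Theorem~\ref{reflection0} with the exact sequences displayed at the start of \S\ref{pram}, settling the triviality assertions first and then deducing the rank inequality as the spiegelungssatz. I would proceed in three steps.

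\emph{Step 1 (the four triviality claims).}
The component $\Cl_1 = \Cl^{e_1}$ is the image under the norm of the ideal class group of $\mathbb{Q}$, hence trivial. For $\mathcal{T}_1$, I specialize the exact sequence $1 \to \mathcal{R}_\theta \to \mathcal{T}_\theta \to \widetilde{\Cl}_\theta \to 1$ at $\theta = 1$: since $\widetilde{\Cl}_1 \subseteq \Cl_1 = 1$, and $\mathcal{R}_1$ vanishes (the units of $\mathbb{Q}$ are $\pm 1$, which $\log$ kills, so the quotient $\log(U)_1/\log(\overline{E})_1 = \log(U)_1$ is torsion-free), one gets $\mathcal{T}_1 = 1$. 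Applying Theorem~\ref{reflection0} to the pairs $(\theta,\theta^*) = (1,\omega)$ and $(\omega,1)$ then yields ${\rm rk}_p(\mathcal{T}_\omega) = {\rm rk}_p(\Cl_1) = 0$ and ${\rm rk}_p(\Cl_\omega) = {\rm rk}_p(\mathcal{T}_1) = 0$; both being finite $p$-groups, they vanish.

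\emph{Step 2 (the identity $\mathcal{T}_{\chi^*} = \widetilde{\Cl}_{\chi^*} \subseteq \Cl_{\chi^*}$ for $\chi \in \mathscr{X}_+$).}
The paper records $\mathcal{R}_\theta = 1$ for every odd $\theta$: the normalized regulator involves only $\overline E$, and $E = E_+ \oplus \mu_p^{}$ with $\mu_p^{}$ killed by $\log$, so $\mathcal{R}$ is a plus-part object. Specializing the above exact sequence at $\theta = \chi^* \in \mathscr{X}_-$ (odd) collapses $\mathcal{T}_{\chi^*}$ onto $\widetilde{\Cl}_{\chi^*}$, a subgroup of $\Cl_{\chi^*}$ by construction.

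\emph{Step 3 (the reflection inequality).}
By class field theory, $\Cl/\widetilde{\Cl}$ corresponds to $\mathrm{Gal}(H \cap \widetilde K/K)$, which embeds into $\Gamma = \mathrm{Gal}(\widetilde K/K)$. Taking $\chi^*$-components sends the finite group $\Cl_{\chi^*}/\widetilde{\Cl}_{\chi^*}$ into $\Gamma_{\chi^*} \simeq \mathbb{Z}_p$, so the quotient is cyclic of $p$-rank $\delta_\chi \in \{0,1\}$. The elementary sandwich ${\rm rk}_p(\widetilde{\Cl}_{\chi^*}) \leq {\rm rk}_p(\Cl_{\chi^*}) \leq {\rm rk}_p(\widetilde{\Cl}_{\chi^*}) + 1$, valid for a cyclic quotient of a finite abelian $p$-group, combined with ${\rm rk}_p(\mathcal{T}_{\chi^*}) = {\rm rk}_p(\Cl_\chi)$ from Theorem~\ref{reflection0}, delivers ${\rm rk}_p(\Cl_{\chi^*}) = {\rm rk}_p(\Cl_\chi) + \delta_\chi$.

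No step here requires a new idea; the only mild subtlety is verifying in Step~3 that the $p$-rank can increase by at most one across a cyclic quotient, which follows from the right-exactness $A \to B \to B/A \to 0$ tensored with $\F_p$ together with the injectivity of $A[p] \hookrightarrow B[p]$.
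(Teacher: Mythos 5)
Your proposal follows exactly the route the paper intends (the corollary is stated as an immediate consequence of Theorem~\ref{reflection0} together with the exact sequences $1 \to {\mathcal R}_\theta \to {\mathcal T}_\theta \to \wt\Cl_\theta \to 1$ and $\Gamma_{\chi*} \simeq \Z_p$), and Steps 1 and 2 are fine: ${\mathcal R}_1=1$, $\wt\Cl_1 \subseteq \Cl_1 = 1$, hence ${\mathcal T}_1=1$, and the two applications of the reflection theorem at $(\theta,\theta^*)=(\omega,1)$ and $(1,\omega)$ kill $\Cl_\omega$ and ${\mathcal T}_\omega$; likewise ${\mathcal R}_{\chi*}=1$ collapses ${\mathcal T}_{\chi*}$ onto $\wt\Cl_{\chi*}$. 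The only flaw is the justification in Step 3: $\Cl_{\chi*}/\wt\Cl_{\chi*} \simeq {\rm Gal}(H\cap\wt K/K)_{\chi*}$ does \emph{not} embed into $\Gamma_{\chi*}\simeq \Z_p$ --- a nontrivial finite group cannot inject into a torsion-free group --- it is a \emph{quotient} of $\Gamma_{\chi*}$ (restriction from $\wt K$ to $H\cap \wt K$ is surjective). Since every finite quotient of $\Z_p$ is cyclic, the conclusion you actually use (the quotient is cyclic, so its $p$-rank is $\delta_\chi\in\{0,1\}$) stands, and with that one word corrected your sandwich argument ${\rm rk}_p(\wt\Cl_{\chi*}) \leq {\rm rk}_p(\Cl_{\chi*}) \leq {\rm rk}_p(\wt\Cl_{\chi*})+1$ combined with ${\rm rk}_p({\mathcal T}_{\chi*})={\rm rk}_p(\Cl_\chi)$ gives the spiegelungssatz exactly as claimed.
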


\begin{remark}
(i) One says that $K$ is $p$-rational if ${\mathcal T}=1$ (same definition
for any number field fulfilling the Leopoldt conjecture at $p$; 
see \cite{Gr2,Gr8} for more details and programs testing the $p$-rationality
of any number field). 
For the $p$th cyclotomic field $K$ this is equivalent to its 
``$p$-regularity'' in the more general context of ``regular kernel'' given in 
\cite[Th\'eor\`eme 4.1]{GJ} (${\mathcal T}_- = 1$ may be 
interpreted as the conjectural ``relative $p$-rationality'' of $K$).

\smallskip
(ii) As we have seen, at each unramified cyclic extension 
$L_+$ of degree $p$ of $K_+$ is associated a $p$-primary 
pseudo-unit $\alpha \in (K^\times/K^{\times p})_-$ such that
$L_+ K= K(\sqrt[p]{\alpha})$. Put $(\alpha)= {\mathfrak A}^p$, 
where $\cl({\mathfrak A}) \in \Cl_-$; moreover ${\mathfrak A}$ 
is not principal, otherwise $\alpha$ should be, up to a $p$th power 
factor, a unit $\varepsilon$ such that $\varepsilon^{1+s_{-1}} = 1$, 
which gives $\varepsilon \in \mu_p^{}$ (absurd). 
In the same way, if $G$ operates via $\chi$ on ${\rm Gal}(L_+/K_+)$ 
then by Kummer duality $G$ operates via $\chi^*$ on 
$\langle \alpha \rangle K^{\times p}/K^{\times p}$.

\smallskip
(iii) As explained in the Introduction, we shall prove in 
Section \ref{sec4} that such pseudo-units $\alpha$ may be found 
by means of twists ${\rm g}_c(\ell):=\tau(\psi)^{c-\sigma_c}$  
associated to primes $\ell \equiv 1 \!\!\pmod p$ and Artin 
automorphisms $\sigma_c$.
\end{remark}

\subsection{Vandiver's conjecture and Gauss sums}\label{VG}

Recall the formula \cite[Corollary III.2.6.1]{Gr1}:
$$\order {\mathcal T}_- = \frac{\order \Cl_-}
{\order \big(\Z_p \,{\rm log}(I)\, \big /\, \Z_p \,{\rm log}(U)\big)_-}, $$

where $I$ is the group of prime to $p$ ideals of $K$ and
$U =1 + \varpi\,\Z_p[\varpi]$.
For any ${\mathfrak A} \in I$, 
let $m \geq 1$ be such that ${\mathfrak A}^m = (\alpha)$, then
${\rm log}({\mathfrak A}) := \frac{1}{m}{\rm log}(\alpha)$ 
where ${\rm log}$ is the $p$-adic logarithm; taking the minus parts,
${\rm log}({\mathfrak A})$ becomes well-defined since 
$\Q_p {\rm log}(E)_- = 0$. We obtain:
\begin{equation}\label{tauchi}
\order {\mathcal T}_{\chi*} = \frac {\order \Cl_{\chi*}}
{\order \big(\Z_p \,{\rm log}(I) \big / \Z_p \,{\rm log}(U)\big)_{\chi*}},
\ \, \hbox{for all $\chi =: \omega^n \in {\mathscr X}_+$}
\end{equation}

The following reasonning (from \cite[\S\,3]{Gr4}) gives
another interpretation of the result of Iwasawa \cite{Iw}. 
Consider  the Stickelberger element
$S :=\frac{1}{p} \sm_{a=1}^{p-1}  a\, s_a^{-1} \in \Q[G]$; it is such that
$\hbox{$S\,.\, e_{\chi*} = B_{1, \,(\chi^*)^{-1}}\,.\,e_{\chi*} \in \Z_p[G]$ for all 
$\chi \in {\mathscr X}_+$;} $
then $\chi^* = \omega^{p-n}$
for which $\order \Cl_{\chi*}$ corresponds to the ordinary 
Berrnoulli numbers $B_n$ giving the ``exponents of 
$p$-irregularity'' $n$ for $B_n \equiv 0 \pmod p$
(see Definitions \ref{ND} (vii)).

\smallskip
Let $\ell$ be a prime number totally split in $K$ 
(thus $\ell \equiv 1 \pmod p$). Let $\psi$ be a 
character of order $p$ of $\F_\ell^\times$.
We define the Gauss sum (where $\xi_{\ell}$ is a primitive $\ell$th root of unity):
\begin{equation}\label{eq1}
\tau(\psi) := -\sm_{x \in \F_\ell^\times} \psi(x)\,\xi_{\ell}^{x}
\in \Z[\mu_{p\,\ell}^{}].
\end{equation}
 
\begin{lemma}\label{congr}
We have $\tau(\psi)^{\sigma_a}
= \psi(a)^{-a}\, \tau(\psi^a)$, where $\sigma_a$
is the Artin automorphism attached to $a$
in ${\rm Gal}(\Q(\mu_{p\,\ell})/\Q)$, and
$\tau(\psi)^p \in \Z [\zeta_p]$; then
$\tau(\psi) \equiv 1 \pmod {{\mathfrak p}\,\Z[\mu_{p\,\ell}]}$.
\end{lemma}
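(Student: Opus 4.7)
The plan is to handle the three assertions in sequence, starting with the transformation law and then extracting the other two from it; each step is a direct verification, so I do not expect any substantive obstacle.

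For the Galois action $\tau(\psi)^{\sigma_a} = \psi(a)^{-a}\,\tau(\psi^a)$, I apply $\sigma_a$ termwise in the definition \eqref{eq1}. Since $\psi$ takes values in $\mu_p$ and $\sigma_a$ acts on $\mu_{p\ell}$ by $\zeta \mapsto \zeta^a$, one has $\sigma_a(\psi(x)) = \psi(x)^a$ and $\sigma_a(\xi_\ell^x) = \xi_\ell^{ax}$. Reindexing by $y = ax \in \F_\ell^\times$ and pulling out the factor $\psi(a^{-1})^a = \psi(a)^{-a}$ via multiplicativity of $\psi$ leaves exactly the sum defining $\tau(\psi^a)$.

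For $\tau(\psi)^p \in \Z[\zeta_p]$, I specialize the transformation law to $a \equiv 1 \pmod p$: then $\psi^a = \psi$, so $\sigma_a(\tau(\psi)) = \psi(a)^{-a}\,\tau(\psi)$, and raising to the $p$-th power kills the $\mu_p$-valued scalar. This shows $\tau(\psi)^p$ is fixed by $\{\sigma_a : a \equiv 1 \pmod p\} = {\rm Gal}(\Q(\mu_{p\ell})/\Q(\mu_p))$; integrality is automatic from the definition, so $\tau(\psi)^p \in \Z[\zeta_p]$ by Galois descent.

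For the congruence $\tau(\psi) \equiv 1 \pmod{{\mathfrak p}\,\Z[\mu_{p\ell}]}$, I use that every value $\psi(x) \in \mu_p$ is congruent to $1$ modulo any prime of $\Z[\mu_{p\ell}]$ above ${\mathfrak p}$, since $\zeta_p - 1 \in {\mathfrak p}$. Reducing the defining sum collapses $\tau(\psi)$ to $-\sum_{x \in \F_\ell^\times} \xi_\ell^x = -(-1) = 1$, using the vanishing of $\sum_{x=0}^{\ell-1} \xi_\ell^x$. The only mild point of care is the tacit identification of ${\mathfrak p}$ with its extension to $\Z[\mu_{p\ell}]$, which is harmless since $p$ is unramified (in fact totally split) in $\Q(\mu_\ell)/\Q$.
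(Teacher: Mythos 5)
Your argument is correct and follows essentially the same route as the paper: termwise application of $\sigma_a$ with the reindexing $y=ax$ to pull out $\psi(a)^{-a}$, specialization to $a\equiv 1\pmod p$ plus Galois descent for $\tau(\psi)^p\in\Z[\zeta_p]$, and reduction of the defining sum modulo ${\mathfrak p}$ using $\sum_{x\in\F_\ell^\times}\xi_\ell^x=-1$. You are merely a bit more explicit than the paper on the descent step and on the harmless identification of ${\mathfrak p}$ with ${\mathfrak p}\,\Z[\mu_{p\ell}]$.
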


\begin{proof}
By definition of $\sigma_a$, one has
$\tau(\psi)^{\sigma_a} = 
- \sm_{x \in \F_\ell^\times} \psi(x)^a\,\xi_{\ell}^{a\,x}
= - \psi^a(a^{-1})  \sm_{y \in \F_\ell^\times}\psi^a(y)\,\xi_{\ell}^{y}$;
whence the second claim taking 
$\sigma_a \in {\rm Gal}(\Q(\mu_{p\,\ell})/K)$ (i.e., $a \equiv 1 \pmod p$).

\smallskip
Then $\tau(\psi) \equiv -\sum_{x \in \F_\ell^\times} \xi_\ell^x \!
\pmod {{\mathfrak p}\,\Z[\mu_{p\,\ell}]}$; 
since $\ell$ is prime, $\sum_{x \in \F_\ell^\times} \xi_\ell^x = -1$.
\end{proof}

We then have the fundamental classical relation in $K$ 
(see \cite[\S\S\,6.1, 6.2, 15.1]{Wa}):
\begin{equation}\label{eq2}
{\mathfrak L}^{\,p S} = \tau(\psi)^p \, \Z[\zeta_p], 
\end{equation}

for ${\mathfrak L} \mid \ell$ such that $\psi$ is defined on the 
multiplicative group of $\Z[\zeta_p]/{\mathfrak L} \simeq \F_\ell$.

\begin{remark}\label{remaell}
(i) Since various choices of ${\mathfrak L} \mid \ell$, $\xi_\ell$ and $\psi$, 
from a given $\ell$, corres\-pond to Galois conjugations and/or products by
a $p$th root of unity, we denote simply $\tau(\psi)$ such a Gauss sum, 
where $\psi$ is for instance the canonical character of order $p$; 
for convenience, we shall have in mind that $\ell$ {\it defines} 
such a $\tau(\psi)$ (and some other objects) in an 
obvious way. One verifies that the forthcoming properties 
($p$-primarities, Kummer radicals\,$\ldots$) do not depend 
on these choices especially because of the action of the 
$\theta$-components.

\smallskip
(ii) If we consider $\alpha := \tau(\psi)^p \in K^\times$ as the 
Kummer radical of the cyclic extension $M_\ell := K(\tau(\psi))$ 
of $K$, we have $\alpha^{c - s_c} =: {\rm g}_c(\ell)^p$, where
${\rm g}_c(\ell) := \tau(\psi)^{c-\sigma_c} \in K^\times$; which 
gives $M_\ell = K(\sqrt[p]{\alpha}) = F_\ell K$, where $F_\ell$
is the subfield of $\Q(\mu_\ell)$ of degree $p$ (the character 
of $\langle \alpha \rangle K^{\times p}/K^{\times p}$
is $\omega$ and that of ${\rm Gal}(M_\ell/K)$ is~$1$). 
Thus $p$ is unramified in $M_\ell/K$
(which is coherent with $\tau(\psi) \equiv 1 \pmod 
{{\mathfrak p}\Z_p[\mu_{p\, \ell}]}$ implying 
$\tau(\psi)^p \equiv 1 \pmod {{\mathfrak p}^p}$); it splits if and 
only if $\tau(\psi)^p \equiv 1 \pmod {{\mathfrak p}^{p+1}}$.
\end{remark}

Taking the logarithms in \eqref{eq2}, we obtain, for all 
$\chi \in {\mathscr X}_+$:
$$\big (S\,.\, e_{\chi*} \big) \,.\, {\rm log} ({\mathfrak L}) =
 B_{1, \,(\chi^*)^{-1}} \,.\,{\rm log}({\mathfrak L}) \,.\, e_{\chi*}=
{\rm log}(\tau(\psi)) \,.\, e_{\chi*} ,$$

where ${\rm log}(\tau(\psi)):= \frac{1}{p}{\rm log}(\tau(\psi)^p) 
\in \Z_p[\varpi]$. Put $B_{1, \,(\chi^*)^{-1}} \sim p^e$, $e \geq 1$,
where $\sim$ means equality up to a $p$-adic unit.
Then  $p^e\,\Z_p \, {\rm log}({\mathfrak L}) 
\,.\, e_{\chi*} = {\Z_p}\, {\rm log}(\tau(\psi)) \,.\, e_{\chi*}$, 
thus, from \eqref{tauchi}, since $I/P$ may be represented by
prime ideals of degree $1$:
\begin{equation}\label{tchi*}
\order {\mathcal T}_{\chi*} = \frac {p^e}
{\order \big({\Z_p}{\rm log}\,({\mathcal G}) \big / 
p^e \, {\rm log}\,(U)\big)_{\chi*}}\, ,
\end{equation}

where ${\mathcal G}$ is the group generated by all the 
previous Gauss sums.

\smallskip
So, the ``Vandiver conjecture at $\chi \in {\mathscr X}_+$'' is equivalent to
$\big({\Z_p}\,{\rm log}\,({\mathcal G}) /{\rm log}(U)\big)_{\chi*} = 1$, 
and is, as expected, obviously fulfilled if $e=0$.
The whole Vandiver conjecture is equivalent to 
the fact that the images of the Gauss sums in
$U$ generate the minus part of this $\Z_p$-module
giving again Iwasawa's result \cite{Iw}. 

\smallskip
We shall from now make the following working hypothesis which 
corresponds to the more subtle case for testing Vandiver's conjecture 
with Theorems \ref{thmp}, \ref{N} (or Theorem \ref{first}), the case where
some $\Cl_{\chi*}$ are not cyclic being obvious for 
all the forthcoming statements, as soon as one knows that 
$B_{1, \,(\chi^*)^{-1}} \sim p^e$ gives the order of $\Cl_{\chi*}$ thus its 
annihilation and identities of the form ${\mathfrak a}^{p^e} = (\beta^p)$, 
$\beta \in K^\times$.
So, this will give ${\mathscr E}_\ell(p) \cap {\mathscr E}_0(p) \ne \ev$
for all $\ell\equiv 1\! \pmod p$ (see \S\,\ref{mainthm}):

\begin{hypothesis}[Cyclicity hypothesis]\label{hypo}
We assume that,  for all $\chi \in {\mathscr X}_+$, the component 
$\Cl_{\chi*}$ of the $p$-class group is cyclic (which implies the
cyclicity of $\Cl_{\chi}$); in other words, we restrict 
ourselves to the case where $\Cl$ is $\Z_p[G]$-monogenous 
(cf. Definition \ref{ND}\,(viii)), giving ${\rm rk}_p(\Cl_-) = i(p)$.
\end{hypothesis}

\subsection{Vandiver's conjecture and ray class group modulo $(p)$}
\label{appli}
Assume the Hypothesis \ref{hypo} and let $\chi = \omega^n 
\in {\mathscr X}_+$ be such that $B_{1, \,(\chi^*)^{-1}} \sim p^e$, $e \geq 1$
(i.e., $\Cl_{\chi*} \simeq \Z/p^e\Z$); thus, from \eqref{tchi*}, 
we have ${\mathcal T}_{\chi*}=1$ (i.e., $\Cl_\chi = 1$) if and only if there 
exists a prime number $\ell \equiv 1 \pmod p$ such that the corresponding 
${\rm log}(\tau(\psi)_{\chi*})$ generates 
${\rm log}(U_{\chi*}) = {\rm log} (1+\varpi^{p-n}\Z_p[\varpi])
= \varpi^{p-n}\Z_p[\varpi]$ (Proposition \ref{varpi}), which 
indicates analytically the non-$p$-primarity of 
$\tau(\psi)_{\chi*}$ in $\Z[\zeta_p]$ since $n>1$. 

\smallskip
There is also the fact that the Gauss sums (or the ${\rm g}_c(\ell)$), 
considered modulo $p$th powers and computed modulo $p$, are indexed
by infinitely many $\ell$; in other words there are some non-obvious large 
periodicities in the results as $\ell$ varies since numerical data
are finite in number.

\smallskip
This may be explained as follows (giving also an interesting criterion
which will imply new heuristics):
 
\begin{theorem}\label{cyclicity} 
Let $\Cl^{(p)}$ be the $p$-subgroup of  the ray 
class group $I/\{(x),\,  x \equiv 1 \pmod p\}$ of modulus $p\,\Z[\zeta_p]$. 
Then for any $\chi \in {\mathscr X}_+$,
we have (under the Hypothesis \ref{hypo}) the following properties:

\smallskip
(i) $\order \Cl^{(p)}_{\chi*} = p \cdot \order \Cl_{\chi*}$.

\smallskip
(ii) The condition $\Cl_\chi = 1$ is equivalent to the cyclicity of $\Cl^{(p)}_{\chi*}$.
\end{theorem}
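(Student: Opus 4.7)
The plan is to set up the standard ray-class exact sequence, extract its $\chi^*$-component, and then read off the cyclicity criterion from the Kummer-theoretic material developed earlier.

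For part (i), I would start from the six-term sequence
$$1 \too E_{(p)} \too E \too (\mathcal{O}_K/p)^\times \too \Cl^{(p)} \too \Cl \too 1,$$
with $E_{(p)} := \{u \in E : u \equiv 1 \!\!\pmod p\}$, tensor with $\Z_p$, and pick off the $\chi^*$-component; the latter operation is exact since $p \nmid \order G$. As $\chi^* \in \mathscr{X}_-$ is odd and distinct from $\omega$, the full unit group has trivial $\chi^*$-part, so the sequence collapses to
$$0 \too \bigl((\mathcal{O}_K/p)^\times \otimes \Z_p\bigr)_{\chi^*} \too \Cl^{(p)}_{\chi^*} \too \Cl_{\chi^*} \too 0.$$
The left-hand term is computed via the Dwork parameter $\varpi$ from Proposition \ref{varpi}: the $\Z_p[G]$-module $U^{(1)}/U^{(p-1)}$ decomposes as $\bigoplus_{k=1}^{p-2}\F_p\,\varpi^k$, and the unique index with $G$-character $\chi^*=\omega^{p-n}$ is $k=p-n$, giving $((\mathcal{O}_K/p)^\times\otimes\Z_p)_{\chi^*}\simeq\F_p$. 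Comparing orders proves (i).

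For part (ii), assume Hypothesis \ref{hypo} and write $\Cl_{\chi^*}\simeq \Z/p^e\Z$. If $e=0$ the claim is trivial, for then $\Cl^{(p)}_{\chi^*}=\F_p$ is cyclic and reflection forces $\Cl_\chi=1$. Assume $e\geq 1$, with $[\mathfrak A]$ a generator and $\alpha\in K^\times\otimes\Z_p$ such that $\mathfrak A^{p^e}=(\alpha)$ in the $\chi^*$-component. The extension in (i), an extension of $\Z/p^e\Z$ by $\F_p$, is non-split (equivalently, $\Cl^{(p)}_{\chi^*}$ is cyclic) iff the lift of $[\mathfrak A]$ has order $p^{e+1}$, iff the image of $(\alpha)=\mathfrak A^{p^e}$ in $\Cl^{(p)}_{\chi^*}$ is non-zero. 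Because $(E\otimes\Z_p)_{\chi^*}=0$, that image coincides with $\alpha_{\chi^*}\bmod p$ inside the $\F_p$ above, and Proposition \ref{varpi} rephrases the condition as: $\alpha_{\chi^*}$ is \emph{not} $p$-primary.

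To finish, I would link non-$p$-primarity of $\alpha_{\chi^*}$ to $\Cl_\chi=1$. Setting $\mathfrak B := \mathfrak A^{p^{e-1}}$, one has $\mathfrak B^p=(\alpha)$, so $[\mathfrak B]$ generates the unique order-$p$ subgroup of the cyclic $\Cl_{\chi^*}$, and $\alpha\notin K^{\times p}$ (else $\mathfrak A^{p^{e-1}}$ would be principal, contradicting the order $p^e$ of $[\mathfrak A]$). By reflection (Theorem \ref{reflection0}) together with $\mathcal T_{\chi^*}=\wt\Cl_{\chi^*}$ (Corollary \ref{reflection}), the $p$-primary classes in $(AK^{\times p}/K^{\times p})_{\chi^*}$ form an $\F_p$-subspace of dimension ${\rm rk}_p(\Cl_\chi)$, whereas under Hypothesis \ref{hypo} the ambient space itself has dimension ${\rm rk}_p(\Cl_{\chi^*})=1$. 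Hence the non-zero class of $\alpha_{\chi^*}$ is $p$-primary iff the subspace is all of the ambient space, iff $\Cl_\chi\neq 1$; combined with the previous paragraph this gives $\Cl^{(p)}_{\chi^*}$ cyclic $\iff \Cl_\chi=1$.

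The main obstacle is the last paragraph: one must identify the specific pseudo-unit $\alpha$ attached to the chosen generator of $\Cl_{\chi^*}$ with a generator of the $p$-torsion, and then use Hypothesis \ref{hypo} so that $p$-primarity of this particular $\alpha_{\chi^*}$, rather than of some unspecified pseudo-unit, governs $\Cl_\chi$ via reflection; without the cyclicity hypothesis the ambient space $(AK^{\times p}/K^{\times p})_{\chi^*}$ could be higher-dimensional and the equivalence would break down.
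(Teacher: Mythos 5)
Your proposal is correct, and part (i) coincides with the paper's argument (the paper uses the quotient $V/W$ of global elements $\equiv 1 \pmod{\mathfrak p}$ by those $\equiv 1 \pmod p$, which is your $((\mathcal{O}_K/p)^\times\otimes\Z_p)_{\chi^*}\simeq\F_p$, together with $E_{\chi^*}=1$). For part (ii), however, you take a genuinely different route. The paper runs both directions through the abelian $p$-ramification machinery: for one direction, $\Cl_\chi=1$ gives ${\mathcal T}_{\chi^*}=1$ by reflection, so $H^{\rm pr}_{\chi^*}/K$ is a $\Z_p$-extension containing the $\chi^*$-part of the $p$-ray class field, forcing cyclicity; for the converse, the non-$p$-primary $\alpha_{\chi^*}$ produced from a generator of the cyclic ray class group cuts out, by Kummer duality and the cyclicity of ${\mathcal T}_\chi$ (with $\Gamma_\chi=1$), the unique degree-$p$ extension of $K$ decomposed over $K_+$ inside $H^{\rm pr}_\chi$; since it is ramified at $p$ and $H^{\rm pr}_\chi\supset H_\chi$, one gets $\Cl_\chi=1$. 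You instead settle the equivalence ``$\Cl^{(p)}_{\chi^*}$ cyclic $\Leftrightarrow$ $\alpha_{\chi^*}$ not $p$-primary'' purely at the level of the group extension of (i) (cyclic iff the lift of a generator has order $p^{e+1}$ iff the image of $\mathfrak A^{p^e}=(\alpha)$ in the kernel $\F_p$ is nonzero, translated by Proposition \ref{varpi}), and then close the loop with a dimension count: under Hypothesis \ref{hypo} the space $(AK^{\times p}/K^{\times p})_{\chi^*}$ is one-dimensional and its subspace of $p$-primary classes has dimension ${\rm rk}_p(\Cl_\chi)$, so the nonzero class of $\alpha_{\chi^*}$ is $p$-primary exactly when $\Cl_\chi\neq 1$. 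This avoids any explicit use of ${\mathcal T}$, $\Gamma$ and $H^{\rm pr}$ and is more elementary (Kummer theory plus the Hilbert class field), at the price of handling both directions through the single pseudo-unit attached to a generator of $\Cl_{\chi^*}$ — which, as you note, is where Hypothesis \ref{hypo} is indispensable. One small caveat: the dimension statement ``$p$-primary classes of character $\chi^*$ correspond to $\Cl_\chi/\Cl_\chi^p$'' is not literally the statement of Theorem \ref{reflection0} or Corollary \ref{reflection}; it is proved by the same Kummer-duality argument as in the proof of Theorem \ref{reflection0} (radical of the maximal unramified elementary $p$-extension of Galois character $\chi$), so you should spell out that identification rather than merely cite the rank equality.
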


\begin{proof} 
Let  $V := \{x \in K^\times \!, \, x \equiv 1\!\! \pmod {\mathfrak p}\}$ and
$W := \{x \in K^\times \!, \, x \equiv 1  \!\! \pmod p\}$.
Since $E_{\chi*}=1$, we have the exact sequence (using Proposition \ref{varpi}): 
$$1 \to (V/W)_{\chi*} \simeq \F_p \too \Cl^{(p)}_{\chi*} \too \Cl_{\chi*} \to 1,$$
giving (i). The statement (ii) is obvious if $\Cl_{\chi*}=1$. 
Suppose $\order \Cl_{\chi*}=p^e$, with  $e\geq 1$.

\smallskip
Then $\Cl_\chi = 1$ implies ${\mathcal T}_{\chi*}=1$ 
(from Theorem \ref{spiegel}) which implies
$\Cl^{(p)}_{\chi*} \simeq \Z/p^{e+1} \Z$: indeed, the $\chi^*$-part
$H^{\rm pr}_{\chi*}/K$ of the pro-$p$-extension $H^{\rm pr}/K$ is a
$\Z_p$-extension, thus the $p$-ray class field 
corresponding to $\Cl^{(p)}_{\chi*}$, contained in 
$H^{\rm pr}_{\chi*}$, is a cyclic extension of $K$.

\smallskip
Reciprocally, if $\Cl^{(p)}_{\chi*}  \simeq \Z/p^{e+1} \Z$, $e\geq 1$
(thus $\Cl_{\chi*} \simeq \Z/p^e \Z$), there exists 
${\mathfrak A}$ (whose class generates $\Cl^{(p)}_{\chi*}$)
such that ${\mathfrak A}_{\chi*}^{p^e} = (\alpha_{\chi*})$ 
(where $\alpha_{\chi*}$ is unique up to a $p$th power since $E_{\chi*}=1$)
with $\alpha_{\chi*} \equiv 1 \pmod {{\mathfrak p}^{p-n}}$
($\chi =: \omega^n$, $n\in [2, p-3]$ even), but 
$\alpha_{\chi*} \not\equiv 1 \pmod p$. Note that 
${\rm rk}_p({\mathcal T}_{\chi}) = {\rm rk}_p(\Cl_{\chi*}) = 1$.
Thus $\alpha_{\chi*}$ defines the radical of the unique $p$-ramified
(but not unramified) cyclic extension of degree $p$ of $K$
decomposed over $K_+$ into $L_+/K_+$
and contained in $H^{\rm pr}_{\chi}$ 
(its Galois group is a quotient of order $p$ of the {\it cyclic 
group} ${\mathcal T}_\chi$ since $\Gamma_\chi=1$
for an even $\chi \ne 1$); thus $\Cl_\chi = 1$.
\end{proof}

\section{Twists of Gauss sums associated to 
primes $\ell \equiv 1 \pmod p$}\label{sec4}

Let ${\mathscr L}_p$ be the set of primes $\ell$ totally split in $K$ 
(namely, $\ell \equiv 1 \pmod p$). For $\ell \in {\mathscr L}_p$, let
$\psi :  \F_\ell^\times \to \mu_p^{}$ be a multiplive character of order $p$; 
if $g$ is a primitive root modulo $\ell$, 
we put $\psi(g\! \pmod \ell) = \zeta_p$. 
Let $\xi_\ell$ be a primitive $\ell$-th root of unity; then the Gauss sum 
associated to $\ell$ may be written in $\Z[\mu_{p\,\ell}]$:
\begin{equation}\label{defk}
\tau(\psi) := -\sm_{x \in \F_\ell^\times} \psi(x) \cdot \xi_\ell^x =
-\sm_{k=0}^{\ell-2} \zeta_p^k \cdot \xi_\ell^{g^k}. 
\end{equation}

\subsection{Computation and properties of the twists ${\rm g}_c(\ell) := 
\tau(\psi)^{c - \sigma_c}$}
Let $c \in [2, p-2]$ be a primitive root modulo $p$; to get an integer 
of $K$ (a PARI/GP program in $\Z[\mu_{p\,\ell}]$ overflows as $\ell$ 
increases, even if $\tau(\psi)_{\chi*} = \tau(\psi)^{e'_{\chi*}}$ 
makes sense in $\Z[\zeta_p]$, a posteriori), one uses the twist 
$\tau(\psi)^{c - \sigma_c}$, where $\sigma_c$
is the Artin automorphism attached to $c$
in ${\rm Gal}(\Q(\mu_{p\,\ell}^{})/\Q)$. 
We define for $\ell \in {\mathscr L}_p$ (cf. Lemma \ref{congr}):
\begin{equation}\label{tcl}
{\rm g}_c(\ell) := \tau(\psi)^{c - \sigma_c} \in \Z[\zeta_p] \  \,
\hbox{(see formulas \eqref{eq1}, \eqref{eq2} and Remark \ref{remaell})}.
\end{equation}

giving for all $\chi \in {\mathscr X}_+$, up to $K^{\times p}$ for the generators of ideals:
$${\mathfrak L}^{S_c}\! = {\rm g}_c(\ell) \, \Z[\zeta_p]  
\ \ \& \ \ {\mathfrak L}_{\chi*}^{(c -  \chi*(s_c)) \cdot B_{1, \,(\chi^*)^{-1}}}\! = 
{\rm g}_c(\ell)_{\chi*} \, \Z[\zeta_p]$$ 

(see Definitions \ref{ND}),
where ${\mathfrak L} \mid \ell$ in $K$,
$S_c := (c - s_c) \cdot S \in \Z[G]$ is the corresponding 
twist of the Stickelberger element and where 
${\rm g}_c(\ell) \in \Z[\zeta_p]$. Put:
\begin{equation}\label{eq3}
b_c(\chi^*) := (c -  \chi^*(s_c)) \cdot B_{1, \,(\chi^*)^{-1}} \sim B_{1, \,(\chi^*)^{-1}}, 
\hbox{ for all $\chi \in {\mathscr X}_+$}. 
\end{equation}

Then we obtain the main relation that will be of a constant use:
\begin{equation}\label{eqfond}
 {\mathfrak L}_{\chi*}^{b_c(\chi*)}= {\rm g}_c(\ell)_{\chi*}\,\Z[\zeta_p] .
\end{equation}

\begin{remark}\label{gooddef}
(i) In the above definition \eqref{tcl} of ${\rm g}_c(\ell)$,
$\tau(\psi)^{\sigma_c} = \tau(\psi^c)\cdot \psi^{-c}(c)$
(Lemma \ref{congr}); but for all $\chi \ne 1$, 
$\mu_p^{e_{\chi*}}=1$, defining ${\rm g}_c(\ell)_{\chi*}$ 
without ambiguity up to $K^{\times p}$, which does
not change the $p$-primarity properties. 
But in some sense the best definition of the twists should be
$\psi^{-c}(c)\cdot {\rm g}_c(\ell) = \psi^{-c}(c)\cdot 
\tau(\psi)^{c-\sigma_c}$.

\smallskip
(ii) Note that, since $\tau(\psi)^{1+s_{-1}} = \ell$, this yields
${\rm g}_c(\ell)_\chi \in K^{\times p}$ for all $\chi \in {\mathscr X}_+$.
\end{remark}

\begin{lemma} \label{tc}
Let $\ell \in {\mathscr L}_p$ be given. Then 
$\psi^{-c}(c)\cdot {\rm g}_c(\ell)$ is a product of Jacobi sums 
and $\psi^{-c}(c)\cdot {\rm g}_c(\ell) \equiv
{\rm g}_c(\ell) \equiv 1 \pmod {\mathfrak p}$.
\end{lemma}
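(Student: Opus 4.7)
The plan is to collapse ${\rm g}_c(\ell)$ to a ratio of Gauss sums using Lemma \ref{congr}, then convert that ratio into a product of Jacobi sums using the classical identity $\tau(\psi_1)\tau(\psi_2)=J(\psi_1,\psi_2)\,\tau(\psi_1\psi_2)$ (valid when $\psi_1\psi_2\ne 1$). Concretely, Lemma \ref{congr} gives $\tau(\psi)^{\sigma_c}=\psi(c)^{-c}\,\tau(\psi^c)$, so
\begin{equation*}
{\rm g}_c(\ell)=\tau(\psi)^{c-\sigma_c}=\psi(c)^{c}\cdot\frac{\tau(\psi)^{c}}{\tau(\psi^c)},
\end{equation*}
and therefore
\begin{equation*}
\psi^{-c}(c)\cdot{\rm g}_c(\ell)=\frac{\tau(\psi)^{c}}{\tau(\psi^{c})}.
\end{equation*}

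Next, I would iterate the Gauss--Jacobi identity. Since $c\in[2,p-2]$ is a primitive root modulo $p$ and $\psi$ has order $p$, each character $\psi^{k}$ for $1\le k\le c$ is nontrivial; hence $\psi^{k}\cdot\psi=\psi^{k+1}\ne 1$ for $1\le k\le c-1$, and one gets by induction
\begin{equation*}
\tau(\psi)^{c}=\Bigl(\prod_{k=1}^{c-1}J(\psi^{k},\psi)\Bigr)\cdot\tau(\psi^{c}),
\end{equation*}
so that $\psi^{-c}(c)\cdot{\rm g}_c(\ell)=\prod_{k=1}^{c-1}J(\psi^{k},\psi)\in\Z[\zeta_p]$, which is the desired expression as a product of Jacobi sums.

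For the congruence, I would use the argument already recorded in the proof of Lemma \ref{congr}: for any character $\psi'$ of order $p$ on $\F_{\ell}^{\times}$, one has $\tau(\psi')\equiv-\sum_{x\in\F_{\ell}^{\times}}\xi_{\ell}^{x}=1\pmod{\mathfrak p\,\Z[\mu_{p\ell}]}$. Applying this to both $\psi$ and $\psi^{c}$ yields $\tau(\psi)^{c}\equiv 1$ and $\tau(\psi^{c})\equiv 1\pmod{\mathfrak p}$, hence
\begin{equation*}
\psi^{-c}(c)\cdot{\rm g}_c(\ell)=\frac{\tau(\psi)^{c}}{\tau(\psi^{c})}\equiv 1\pmod{\mathfrak p}.
\end{equation*}
Finally, since $\psi(c)\in\mu_p$ and every $p$th root of unity satisfies $\zeta\equiv 1\pmod{\mathfrak p}$ (because $\mathfrak p=(\zeta_p-1)$), multiplying back by $\psi^{c}(c)$ preserves the congruence, giving ${\rm g}_c(\ell)\equiv 1\pmod{\mathfrak p}$.

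There is no real obstacle here; the only thing to watch is that the Jacobi sum telescoping needs all partial products of characters to stay nontrivial, which is guaranteed by the range $c\in[2,p-2]$ together with $\psi$ being of exact order $p$. The rest is a direct combination of Lemma \ref{congr} with the standard Gauss--Jacobi relation.
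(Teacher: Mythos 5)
Your proof is correct and follows essentially the same route as the paper: the telescoping identity $\tau(\psi)^c=\big(\prod_{k=1}^{c-1}J(\psi^k,\psi)\big)\tau(\psi^c)$ combined with Lemma \ref{congr} to eliminate $\tau(\psi)^{\sigma_c}$, and the congruence $\tau(\psi')\equiv 1\pmod{\mathfrak p}$ for the final claim. The only difference is cosmetic (you substitute Lemma \ref{congr} before telescoping rather than after), so nothing further is needed.
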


\begin{proof} The classical formula \cite[\S\,6.1]{Wa}
for Jacobi sums (with $\psi\,\psi' \ne 1$) is:
$$J(\psi, \psi') :=  \tau(\psi) \cdot \tau(\psi')\cdot \tau(\psi\,\psi')^{-1} =
- \sm_{x \in \F_\ell \, \setminus  \, \{0,1\}} \psi(x) \cdot \psi' (1-x). $$ 

Whence
$\tau(\psi)^c = J_1\cdots J_{c-1} \cdot \tau(\psi^c)$, 
where $J_i = -  \!\!\sm_{x \in \F_\ell \, \setminus  \, \{0,1\}} 
\psi^i(x) \cdot \psi (1-x)$, thus: 
\begin{equation*}
\tau(\psi)^{c-\sigma_c} = J_1\cdots J_{c-1} \cdot \tau(\psi^c)\,
\tau(\psi)^{-\sigma_c} = J_1\cdots J_{c-1} \cdot \psi^{c}(c),
\end{equation*}

from Lemma \ref{congr}; then
$\tau(\psi) \equiv 1 \!\!\pmod {{\mathfrak p}\,\Z[\mu_{p\,\ell}]}$
implies the result for ${\rm g}_c(\ell)$.
\end{proof}

Thus, in the numerical computations, we shall use the relation:
\begin{equation}\label{jacobi}
{\rm g}_c(\ell)_{\chi*} = ( J_1\cdots J_{c-1})_{\chi*}\ 
\hbox{for any $\chi \in {\mathscr X}_+$}.
\end{equation}

The following definitions will be of constant use in the paper:

\begin{definition}[exponents of $p$-primarity 
and $p$-irregularity] \label{defsets}
${}$
\quad (i) We call set of exponents of $p$-primarity, of a prime 
$\ell \in {\mathscr L}_p$, the set 
${\mathscr E}_\ell(p)$ of even integers $n \in [2, p-3]$  
such that ${\rm g}_c(\ell)_{\omega^{p-n}}$ is $p$-primary,
thus ${\rm g}_c(\ell)_{\omega^{p-n}} \equiv 1\!\! \pmod p$
(Definition \ref{psu}\,(ii), Proposition \ref{varpi}).

\smallskip
\quad (ii) We call set of exponents of $p$-irregularity, the set 
${\mathscr E}_0(p)$ of even integers $n \in [2, p-3]$  
such that $B_n \equiv 0 \!\pmod p$,
thus, $B_{1,{\omega^{n-1}}} \equiv 0\! \pmod p$ (see Definitions \ref{ND}\,(vii)).
\end{definition}

\begin{remark} \label{remCNS}
Let $\chi =: \omega^n \in {\mathscr X}_+$ and $\ell \in {\mathscr L}_p$.
If ${\rm g}_c(\ell)_{\chi*}$ is 
$p$-primary ($n \in {\mathscr E}_\ell(p)$) this does not give 
necessarily a counterexample to Vandiver's conjecture for the
two following possible reasons considering  
$S_c \, e_{\chi*} = b_c({\chi^*}) \, e_{\chi*}$; recall that from \eqref{eq3}, 
$$b_c({\chi^*}) = 
(c - \chi^*(s_c) )\cdot B_{1, \,(\chi^*)^{-1}} \sim 
B_{1, \,(\chi^*)^{-1}} = B_{1, \, \omega^{n-1}}.$$

\quad (i) The number $b_c({\chi^*})$ is a $p$-adic unit ($n \notin {\mathscr E}_0(p)$),
so the radical ${\rm g}_c(\ell)_{\chi*}$ is not the $p$th power of an ideal (thus not a
pseudo-unit, even if Proposition \ref{varpi} applies) and leads to a cyclic
$\ell$-ramified Kummer extension of degree $p$ of $K_+$.

\smallskip
For instance, for $p=11$ ($c=2$), $\ell = 23$,
the exponent of $11$-primarity is $n=2$ so that $\alpha := {\rm g}_c(\ell)_{\chi*}$
is the integer (where $x=\zeta_{11}$):

\footnotesize
\smallskip
\begin{verbatim}
-8491773970656065727678427465045288222*x^9-1963231019856677733688722439078492228*x^8
+11757523232198873159205810348854526320*x^7-5860674150310922200348907606983566648*x^6
-644088006192816851608142123579276962*x^5-611074014289231284308386817199658010*x^4
+2673005955545675004066087284224877298*x^3+15023028737838809151251842166615658188*x^2
+1520229819300797188419125563036321734*x+17836238554732163868933693789025679469
\end{verbatim}

\normalsize
\smallskip
for which $K(\sqrt[11]{\alpha})/K$ is decomposed over $K_+$ into 
$L_+/K_+$, $\ell$-ramified; then $(\alpha)$ is a product of 
prime ideals above $\ell$ ($s = s_2$): $(\alpha) = 
{\mathfrak L}^{1+2s+2^2s^2+2^3s^3+2^4s^4+2^5s^5+
2^6s^6+2^7s^7+2^8s^8+2^9s^9}$,
up to the $11$th power of an $\ell$-ideal.
We get ${\rm N}_{K/\Q}(\alpha)=\ell^{275}$ and
${\rm N}_{K/\Q}(\alpha-1) \sim 11^{13}$. In fact the program gives
$(\alpha) = {\mathfrak L}_1^{25} \!\cdot\! {\mathfrak L}_2^{27} 
\!\cdot\! {\mathfrak L}_3^{31}
\!\cdot\! {\mathfrak L}_4^{24} \!\cdot\! {\mathfrak L}_5^{28} 
\!\cdot\! {\mathfrak L}_6^{15}
\!\cdot\! {\mathfrak L}_7^{30} \!\cdot\! {\mathfrak L}_8^{23} 
\!\cdot\! {\mathfrak L}_9^{32}
\!\cdot\!{\mathfrak L}_{10}^{40}$ 
and one must discover the significance given above~!
Here $b_c({\chi^*}) \equiv 1 \pmod {11}$.

\smallskip
\quad (ii) The number $b_c({\chi^*})$ is divisible by $p$, 
but the ideal ${\mathfrak L}_{\chi*}$ is $p$-principal and then 
${\rm g}_c(\ell)_{\chi*}$ is a $p$th power in $K^\times$
(numerical examples in \S\,\ref{classes37}).
\end{remark}

\subsection{First main theorem}\label{mainthm}

So, from the previous Remark \ref{remCNS}, a {\it sufficient condition 
for the existence of a counterexample} to Vandiver's conjecture is the 
existence of $\chi \in {\mathscr X}_+$ and $\ell \in {\mathscr L}_p$
such that the three following conditions are fulfilled:

\smallskip
(a) $b_c(\chi^*) \equiv 0 \pmod p$, 

(b) ${\rm g}_c(\ell)_{\chi*}$ is $p$-primary,

(c) ${\rm g}_c(\ell)_{\chi*}$ is not a global $p$th power.

\medskip
We make here a fundamental remark:

\begin{remark} \label{remfond}
If ${\rm rk}_p(\Cl_{\chi_0^*}) \geq 2$ for $\chi_0^{} = \omega^{n_0} \in {\mathscr X}_+$ 
(giving a counterexample to Vandiver's conjecture), we get, from the ``Main Theorem'',
$\order \Cl_{\chi_0^*} \sim b_c({\chi_0^*})$; then the $p$-part of $b_c({\chi_0^*})$ is 
strictely larger than the exponent of $\Cl_{\chi_0^*}$ so that, in any relation 
${\mathfrak L}_{\chi_0^*}^{b_c({\chi_0^*})} = ({\rm g}_c(\ell)_{\chi_0^*})$ 
where ${\mathfrak L}_{\chi_0^*}$ define a generating class of $\Cl_{\chi_0^*}$,
necessarily ${\rm g}_c(\ell)_{\chi_0^*}$ is 
a global $p$th power (condition (c) is never fulfilled), whence the property
$n_0 \in {\mathscr E}_\ell(p) \cap {\mathscr E}_0(p) \ne \ev$ for all $\ell \in {\mathscr L}_p$; 
thus Theorems \ref{thmp} and \ref{N} will apply for trivial reasons and we can 
go back to the cases ${\rm rk}_p(\Cl_{\chi*}) < 2$ (Hypothesis \ref{hypo}) for the reciprocal.
\end{remark}

\begin{lemma} \label{lem}
Let $\chi \in {\mathscr X}_+$ such that $\Cl_\chi \ne 1$. 
There exists a totally split prime ideal ${\mathfrak L}$
such that ${\mathfrak L}_{\chi*}$ represents a generator of
$\Cl_{\chi*}$.
Then ${\mathfrak L}^{S_c \, e_{\chi*}} 
= {\mathfrak L}_{\chi*}^{b_c(\chi*)}  = (\alpha_{\chi*})$, 
where $\alpha_{\chi*}$ is unique 
(up to a $p$th power), thus equal to ${\rm g}_c(\ell)_{\chi*}$
which is $p$-primary and not a global $p$th power.
\end{lemma}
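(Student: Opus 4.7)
My approach will build $\mathfrak{L}$ by a density argument, apply the Stickelberger identity \eqref{eqfond} to exhibit the generator, and then upgrade $\alpha_{\chi^*}$ to a $p$-primary element using that Hypothesis~\ref{hypo} collapses the relevant pseudo-unit space to dimension one.

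First, since $\Cl_\chi \ne 1$, the reflection Corollary~\ref{reflection} together with Hypothesis~\ref{hypo} makes $\Cl_{\chi^*}$ cyclic of some order $p^e$ with $e \ge 1$, and the Main Theorem gives $b_c(\chi^*) \sim p^e$. I would invoke Chebotarev density for $H/\Q$, where $H$ is the $p$-Hilbert class field of $K$: infinitely many rational primes $\ell \equiv 1 \pmod p$ split completely in $K$ and a prime $\mathfrak{L}\mid\ell$ can be chosen with $\cl(\mathfrak{L})$ having $\chi^*$-component generating $\Cl_{\chi^*}$. Relation \eqref{eqfond} then reads $\mathfrak{L}_{\chi^*}^{b_c(\chi^*)} = ({\rm g}_c(\ell)_{\chi^*})$, and I would set $\alpha_{\chi^*} := {\rm g}_c(\ell)_{\chi^*}$. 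Its uniqueness modulo $K^{\times p}$ is immediate because $E_{\chi^*} = 1$: the character $\chi^*$ is odd and distinct from $\omega$, so it appears neither in $E_+$ nor in $\mu_p$.

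Next, to rule out $\alpha_{\chi^*} \in K^{\times p}$, I would write $b_c(\chi^*) = p^e u$ with $u \in \Z_p^{\times}$; if $\alpha_{\chi^*} = \gamma^p$, then $(\gamma)^p = \mathfrak{L}_{\chi^*}^{p^e u}$ in the torsion-free group of fractional ideals, forcing $(\gamma) = \mathfrak{L}_{\chi^*}^{p^{e-1} u}$ and hence $\mathfrak{L}_{\chi^*}^{p^{e-1}}$ principal, contradicting the exact order $p^e$ of $\cl(\mathfrak{L}_{\chi^*})$.

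Finally, the identity $(\alpha_{\chi^*}) = (\mathfrak{L}_{\chi^*}^{p^{e-1} u})^p$ shows $\alpha_{\chi^*}$ is a pseudo-unit, so the exact sequence recalled just after Definition~\ref{psu}, combined with $E_{\chi^*} = 1$ and the cyclicity of $\Cl_{\chi^*}$, yields $\dim_{\F_p}((AK^{\times p}/K^{\times p})_{\chi^*}) = 1$. On the other hand, $\Cl_\chi \ne 1$ furnishes by class field theory an unramified cyclic degree-$p$ extension $L_+/K_+$ whose compositum with $K$ is of the form $K(\sqrt[p]{\alpha_0})$ with $\alpha_0$ a $p$-primary pseudo-unit of character $\chi^*$ (Kummer duality), non-trivial modulo $p$th powers; this $\alpha_0$ therefore spans the whole $1$-dimensional $\chi^*$-pseudo-unit space, and since $\alpha_{\chi^*}$ is non-trivial in that space by the previous step, it is proportional to $\alpha_0$ and hence itself $p$-primary. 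The main obstacle lies precisely in this last identification: without Hypothesis~\ref{hypo}, the $\chi^*$-pseudo-units could occupy a higher-dimensional space in which the Stickelberger-produced $\alpha_{\chi^*}$ might a priori avoid the $p$-primary subspace.
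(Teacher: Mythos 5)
Your proposal is correct and follows essentially the same route as the paper: Chebotarev in $H/\Q$ to produce ${\mathfrak L}$ whose $\chi^*$-component generates $\Cl_{\chi^*}$, the Stickelberger relation \eqref{eqfond} with uniqueness of $\alpha_{\chi^*}$ from $E_{\chi^*}=1$, exclusion of a global $p$th power via the exact order $p^e$ of $\cl({\mathfrak L}_{\chi^*})$, and $p$-primarity forced by Hypothesis~\ref{hypo}. Your final step merely makes explicit, via the one-dimensionality of the $\chi^*$-pseudo-unit space, what the paper phrases as the uniqueness of the unramified Kummer extension $K(\sqrt[p]{\alpha_{\chi^*}})/K$ attached to $\Cl_\chi/\Cl_\chi^p$.
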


\begin{proof} 
From the Chebotarev density theorem in $H/\Q$, 
there exists a prime $\ell$ and $\ov {\mathfrak L} \mid \ell$ 
in $H$ such that the Frobenius
$\big( \frac{H/\Q}{ \ov {\mathfrak L}} \big)$ generates the subgroup of
${\rm Gal}(H/K)$ corresponding to $\Cl_{\chi*}$ by class field theory.
So $\ell$ splits completely in $K/\Q$ ($\ell \in {\mathscr L}_p$)
and the ideal ${\mathfrak L}$ of $K$ under $\ov {\mathfrak L}$ is 
(as ${\mathfrak L}_{\chi*}$) a representative of 
a {\it generator} of $\Cl_{\chi*} \simeq \Z_p/b_c(\chi^*)\,\Z_p$. 
Then ${\mathfrak L}_{\chi*}^{b_c(\chi^*)} = (\alpha_{\chi*})$
where $\alpha_{\chi*} \notin K^{\times p}$; $\alpha_{\chi*}$ is unique since 
$E_{\chi*} = 1$ for $\chi^*\ne \omega$. In terms 
of Gauss sums, ${\mathfrak L}_{\chi*}^{b_c(\chi^*)} = 
({\rm g}_c(\ell)_{\chi*})$, thus $\alpha_{\chi*} = {\rm g}_c(\ell)_{\chi*}$. 
The $p$-primarity of $\alpha_{\chi*}$ is necessary to obtain 
the {\it unique} (still thanks to Hypothesis \ref{hypo}) unramified 
Kummer extension $K (\sqrt[p]{\alpha_{\chi*}})/K$ of degree $p$,
decomposed over $K_+$ into the unramified extension
$L_+/K_+$ of degree $p$ in $H_\chi$, associated to $\Cl_\chi/\Cl_\chi^p$ by 
class field theory, whence the $p$-primarity of ${\rm g}_c(\ell)_{\chi*}$.
\end{proof}

Drawing the consequences of the above, we get,
unconditionally, the main test for Vandiver's conjecture
stated in the Introduction (Theorem \ref{first}\,(a)).
We refer to the relations \eqref{tcl}, \eqref{eq3}, \eqref{eqfond}
and the Definition \ref{defsets}.

\smallskip
\begin{theorem} \label{thmp}
Vandiver's conjecture holds for $K = \Q(\mu_p)$ if and only if
there exists $\ell \equiv 1 \pmod p$ such that
${\mathscr E}_\ell(p) \cap {\mathscr E}_0(p)=\ev$.
\end{theorem}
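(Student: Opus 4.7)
The plan relies on the identity ${\mathfrak L}_{\chi^*}^{b_c(\chi^*)}=({\rm g}_c(\ell)_{\chi^*})$ from \eqref{eqfond}, on the Main Theorem fact $v_p(b_c(\chi^*))=v_p(B_{1,(\chi^*)^{-1}})=:e$, and on the reflection identity ${\rm rk}_p({\mathcal T}_{\chi^*})={\rm rk}_p(\Cl_\chi)$ of Corollary \ref{reflection}. I treat the two implications separately.

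For direction $(\Leftarrow)$, I proceed by contraposition: assume Vandiver fails at some $\chi_0=\omega^{n_0}\in{\mathscr X}_+$, so $\Cl_{\chi_0}\ne 1$. Reflection gives $\Cl_{\chi_0^*}\ne 1$, hence $p\mid B_{1,(\chi_0^*)^{-1}}$ and $n_0\in{\mathscr E}_0(p)$. I then show that ${\rm g}_c(\ell)_{\chi_0^*}$ is $p$-primary for every $\ell\in{\mathscr L}_p$, which forces $n_0\in{\mathscr E}_\ell(p)\cap{\mathscr E}_0(p)\ne\ev$ and contradicts the hypothesis. The case ${\rm rk}_p(\Cl_{\chi_0^*})\ge 2$ is handled directly by Remark \ref{remfond}. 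In the remaining case $\Cl_{\chi_0^*}\simeq\Z/p^e\Z$ with $e\ge 1$, I distinguish by the order $p^f$ ($0\le f\le e$) of the class of ${\mathfrak L}_{\chi_0^*}$ in $\Cl_{\chi_0^*}$: when $f=e$ the prime represents a generator and Lemma \ref{lem} delivers the $p$-primary pseudo-unit; when $f<e$, writing ${\mathfrak L}_{\chi_0^*}^{p^f}=(\beta)$ (well-defined modulo $K^{\times p}$ since $E_{\chi_0^*}=1$) and raising to $b_c(\chi_0^*)/p^f$, whose $p$-adic valuation is $e-f\ge 1$, exhibits ${\rm g}_c(\ell)_{\chi_0^*}$ as a $p$th power, hence trivially $p$-primary.

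For direction $(\Rightarrow)$, assume Vandiver holds. Corollary \ref{reflection} then forces ${\rm rk}_p(\Cl_{\chi^*})\le 1$ for every $\chi\in{\mathscr X}_+$, so Hypothesis \ref{hypo} is automatic and each $\Cl_{\omega^{p-n}}$ with $n\in{\mathscr E}_0(p)$ is cyclic of order $p^{e_n}$, $e_n\ge 1$. Repeating the previous case analysis now under $\Cl_\chi=1$: $p$-primarity of the generator $\beta$ satisfying ${\mathfrak L}_{\chi^*}^{p^{e_n}}=(\beta)$ would produce an everywhere unramified cyclic degree-$p$ extension $K(\beta^{1/p})/K$, contradicting $\Cl_\chi=1$; hence ${\rm g}_c(\ell)_{\chi^*}$ fails to be $p$-primary exactly when the class of ${\mathfrak L}_{\chi^*}$ generates $\Cl_{\chi^*}$, a Frobenius condition on $\ell$ in the $\chi^*$-component $H_{\chi^*}/K$ of the $p$-Hilbert class field. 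Applying the Chebotarev density theorem to the Galois compositum over $\Q$ of the fields $H_{\omega^{p-n}}$ for $n\in{\mathscr E}_0(p)$, the set of primes $\ell\equiv 1\!\pmod p$ whose Frobenius is simultaneously a generator of every cyclic factor has positive density, so such an $\ell$ exists and realizes ${\mathscr E}_\ell(p)\cap{\mathscr E}_0(p)=\ev$. The main obstacle is precisely this simultaneous control: while for each single $\chi$ the existence of a good $\ell$ is immediate from ${\mathcal T}_{\chi^*}=1$ via \eqref{tchi*}, producing a single prime valid for \emph{all} irregular characters requires translating the local $p$-primarity of each ${\rm g}_c(\ell)_{\chi^*}$ into a clean Frobenius condition in a finite abelian extension, after which Chebotarev on the compositum yields the desired $\ell$.
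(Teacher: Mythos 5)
Your proposal is correct and follows essentially the same route as the paper's proof: the identity \eqref{eqfond} together with the Main Theorem valuation of $b_c(\chi^*)$, the dichotomy on whether $\cl({\mathfrak L}_{\chi^*})$ generates the cyclic $\Cl_{\chi^*}$ (Lemma \ref{lem} versus the global $p$th-power case), Remark \ref{remfond} for the non-cyclic case, and Chebotarev applied to the compositum of the fields $H_{\omega^{p-n}}$, $n\in{\mathscr E}_0(p)$, to produce one prime $\ell$ whose class components generate all the relevant $\Cl_{\omega^{p-n}}$ simultaneously. The only deviations are organizational: you prove the direction ``empty intersection $\Rightarrow$ Vandiver'' by contraposition through Lemma \ref{lem}, where the paper argues directly via the cyclicity of ${\mathcal T}_\chi$ inside $H^{\rm pr}_\chi$, and in the converse you establish non-$p$-primarity directly for the simultaneous prime rather than transferring it from component-wise primes; both variants are sound.
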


\begin{proof} As explained in the Remark \ref{remfond}, 
we may assume the cyclicity Hypothesis \ref{hypo}.

\smallskip
Suppose ${\mathscr E}_\ell(p) \cap {\mathscr E}_0(p)=\ev$ and consider, 
for $\chi =: \omega^n \in {\mathscr X}_+$, and $\chi^*=\omega^{p-n}$, the 
relation ${\mathfrak L}_{\chi*}^{b_c(\chi*)} = ({\rm g}_c(\ell)_{\chi*})$ for the 
prime $\ell$ under consideration, and examine the two possibilities:

\smallskip
\quad (i) If $n$ is not an exponent of $p$-irregularity (namely, 
$b_c({\chi^*}) \not \equiv 0 \pmod p$ or $B_n \not\equiv 0 \pmod p$), then
$\Cl_{\chi*}=1$ and $\Cl_{\chi}=1$ from reflection theorem 
(Corollary \ref{reflection}).

\smallskip
\quad (ii) If $n$ is an exponent of $p$-irregularity, then 
$b_c({\chi^*}) \sim p^e$, $e \geq 1$, giving, for some 
$p$-adic unit $u$, ${\mathfrak L}_{\chi*}^{p^e u} = 
({\rm g}_c(\ell)_{\chi*})$ (Lemma \ref{lem}); if ${\mathfrak L}_{\chi*}^{p^{e-1}u}$
is $p$-principal, then ${\rm g}_c(\ell)_{\chi*}$
is a global $p$th power, hence $p$-primary (absurd by assumption).
So ${\mathfrak L}_{\chi*}$ defines a class of order $p^e$ in $\Cl_{\chi*}$
for which the pseudo-unit ${\rm g}_c(\ell)_{\chi*}$ is not $p$-primary by 
assumption; since ${\rm Gal}(H^{\rm pr}_\chi/K_+) = 
{\mathcal T}_\chi$ is cyclic, from relation \eqref{spiegel},
 by Kummer duality $K(\sqrt[p]{{\rm g}_c(\ell)_{\chi*}})$ is the unique 
extension cyclic of degree $p$, decomposed over $K_+$ and
contained in $H^{\rm pr}_\chi$. Since it is ramified at $p$ and since
$H^{\rm pr}_\chi$ contains the $\chi$-component of the $p$-Hilbert 
class field of $K_+$, this implies $\Cl_{\chi}=1$.

\smallskip
Reciprocally, if Vandiver's conjecture holds, then
$\Cl=\Cl_-$ is $\Z_p[G]$-monogenous, thus the direct sum
of non-trivial cyclic isotypic components generated by some $p$-classes
$\gamma^{(n_i)} = \cl({\mathfrak L}^{(n_i)}_{\omega^{p-n_i}}) 
\in \Cl_{\omega^{p-n_i}}$
($n_i \in {\mathscr E}_0(p)$) related to non-$p$-primary 
${\rm g}_c(\ell^{(n_i)})_{\omega^{p-n_i}}$;
thus there exists, from density theorem, $\ell \in {\mathscr L}_p$
and ${\mathfrak L} \mid \ell$ such that 
$\cl({\mathfrak L})_{\omega^{p-n_i}}=
\gamma^{(n_i)}$ for all $i$ (e.g.,
${\mathfrak L} = (z) \cdot \prod_{i} 
{\mathfrak L}^{(n_i)}_{\omega^{p-n_i}}$). So each 
${\rm g}_c(\ell)_{\omega^{p-n_i}} = 
{\rm g}_c(\ell^{(n_i)})_{\omega^{p-n_i}}$ (up to a $p$th power)
is non-$p$-primary, whence 
${\mathscr E}_\ell(p) \cap {\mathscr E}_0(p)=\ev$ for this prime $\ell$.
\end{proof}

\begin{corollary} \label{casvide}
Let $\ell \in {\mathscr L}_p$. If, for all $\chi \in {\mathscr X}_+$,
the numbers ${\rm g}_c(\ell)_{\chi*}$ 
are not $p$-primary (i.e., ${\mathscr E}_\ell(p) = \ev$),
then the Vandiver conjecture holds for $p$.
\end{corollary}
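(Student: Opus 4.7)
The plan is straightforward: this statement is an immediate specialization of Theorem \ref{thmp}. If $\mathscr{E}_\ell(p) = \ev$ for some $\ell \in \mathscr{L}_p$, then the intersection $\mathscr{E}_\ell(p) \cap \mathscr{E}_0(p)$ is automatically empty, irrespective of which exponents happen to lie in $\mathscr{E}_0(p)$. Invoking the sufficiency direction of Theorem \ref{thmp} concludes that Vandiver's conjecture holds at $p$.

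The conceptual point worth emphasising in the write-up is that, in this formulation, one does not need any a priori knowledge of the set of $p$-irregularity exponents: it suffices to verify numerically that none of the components ${\rm g}_c(\ell)_{\chi*}$ (for $\chi \in \mathscr{X}_+$) is $p$-primary, which by Proposition \ref{varpi} amounts to checking ${\rm g}_c(\ell)_{\chi*} \not\equiv 1 \pmod p$ for every even $\chi \ne 1$. This is precisely the foundation for part (b) of Theorem \ref{first}, where the same reasoning, applied simultaneously to a family of primes $\ell_i$, will reduce the condition to $\bigcap_i \mathscr{E}_{\ell_i}(p) = \ev$.

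There is no real obstacle here: the substantive work has already been done in Lemma \ref{lem} and in Theorem \ref{thmp}, so the corollary itself reduces to a one-line logical implication. What genuinely required argument, and what I do not need to redo, is the fact established inside the proof of Theorem \ref{thmp} that, under the cyclicity Hypothesis \ref{hypo} (justified for the reciprocal direction by Remark \ref{remfond}), the non-$p$-primarity of ${\rm g}_c(\ell)_{\chi*}$ at every exponent of $p$-irregularity forces $\Cl_\chi = 1$ via the reflection theorem and Kummer duality inside $H^{\rm pr}_\chi$.
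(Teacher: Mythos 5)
Your proposal is correct and matches the paper's intent exactly: the corollary is stated as an immediate consequence of Theorem \ref{thmp}, since ${\mathscr E}_\ell(p)=\ev$ trivially forces ${\mathscr E}_\ell(p)\cap{\mathscr E}_0(p)=\ev$, and the sufficiency direction of that theorem then gives Vandiver's conjecture at $p$. The paper offers no separate argument beyond this, so your one-line reduction (together with the correct observation that no knowledge of ${\mathscr E}_0(p)$ is needed) is precisely the intended proof.
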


\subsubsection{Program computing ${\mathscr E}_\ell(p)$.}
For $p\in [3, 199]$ and for the least $\ell \in {\mathscr L}_p$,
the program computes ${\rm g}_c(\ell)$ in 
${\sf Mod(J,P)}$, with ${\sf P=polcyclo(p)}$, where 
${\sf J = J_1\cdots J_{c-1}}$ is written in $\Z[x]$ modulo $p\,\Z[x]$; 
${\sf c}$ is a primitive root (mod ${\sf p}$) (see the relation \eqref{jacobi}). 

\smallskip
For the computation of ${\sf J_i}$ we use the discrete logarithm
${\sf znlog}$ to interprete the $1-g^k$ in $g^{\Z/(\ell-1)\Z}$.
We put  $\chi=\omega^n \ \& \ \chi^*= \omega^{1-n}$,
taking ${\sf n=2*m}$ for ${\sf m}$ in $[1, (p-3)/2]$.

\smallskip
The program takes into account the relation 
$J^{1+s_{-1}}\equiv 1 \pmod p$ in the action of the idempotents
and drops the coefficient $\frac{1}{p-1}$ in $e_{\chi*}$ (in which
$\chi^*(s_a^{-1})$ is replaced by the residue of $a^{n-1}$ modulo $p$), 
thus computes in reality ${\rm g}_c(\ell)^{-1/2}$ up to $p$th powers.
Then the polynomials ${\sf Jj}$ give, in the list ${\sf LJ}$, the powers 
${\sf J^j}$ modulo $p$, $j=1,\ldots,p-1$. 

\smallskip
The result is given in 
$${\sf Sn = \prd_{a=1}^{(p-1)/2} s_a(J^{an})}, \ \ \ \hbox{from} \ \ \
{\rm g}_c(\ell)_{\chi*}^{-1/2} = 
\prd_{a=1}^{(p-1)/2} s_a \big ({\rm g}_c(\ell)^{\omega^{n-1}(a)}\big)$$
(up to a $p$th power), where $\omega^{n-1}(a) \equiv a^{n-1} \pmod p$
is computed in ${\sf an}$ and ${\sf J^{an}}$ is given by
${\sf component(LJ,an)}$. 
The conjugate ${\sf s_a(J^{an})}$ 
is computed in ${\sf sJan}$ via the conjugation 
${\sf x \mapsto x^a}$ in ${\sf J^{an}}$, whence the product in 
${\sf Sn}$ (the exponents of $p$-primarity are denoted ${\sf expp}$):

\medskip
{\bf Note:}\label{note} To copy and past the programs in verbatim text, 
one must perhaps replace the symbol of power (in a\^{}b) by the 
PARI/GP symbol ($=$ that of the keyboard); otherwise the 
program does not work (this is due to the character font used by some 
Journals).

\footnotesize
\smallskip
\begin{verbatim}
{forprime(p=3,200,c=lift(znprimroot(p));P=polcyclo(p)+Mod(0,p);
X=Mod(x,P);el=1;while(isprime(el)==0,el=el+2*p);g=znprimroot(el);
print("p=",p," el=",el," c=",c," g=",g);J=1;for(i=1,c-1,Ji=0;
for(k=1,el-2,kk=znlog(1-g^k,g);e=lift(Mod(kk+i*k,p));Ji=Ji-X^e);J=J*Ji);
LJ=List;Jj=1;for(j=1,p-1,Jj=lift(Jj*J);listinsert(LJ,Jj,j));
for(m=1,(p-3)/2,n=2*m;Sn=Mod(1,P);for(a=1,(p-1)/2,
an=lift(Mod(a,p)^(n-1));Jan=component(LJ,an);sJan=Mod(0,P);
for(j=0,p-2,aj=lift(Mod(a*j,p));sJan=sJan+x^(aj)*component(Jan,1+j));
Sn=Sn*sJan);if(Sn==1,print("   exponents of p-primarity: ",n))))}

p=3   el=7   c=2  g=3                p=97  el=389  c=5  g=2 expp:26
p=5   el=11  c=2  g=2                p=101 el=607  c=2  g=3 expp:10
p=7   el=29  c=2  g=2                p=103 el=619  c=5  g=3
p=11  el=23  c=3  g=5 expp:2         p=107 el=643  c=2  g=11
p=13  el=53  c=2  g=2                p=109 el=1091 c=6  g=2 expp:14,86
p=17  el=103 c=3  g=5                p=113 el=227  c=3  g=2
p=19  el=191 c=4  g=19               p=127 el=509  c=3  g=2
p=23  el=47  c=2  g=5                p=131 el=263  c=2  g=5 expp:16
p=29  el=59  c=2  g=2 expp:2         p=137 el=823  c=3  g=3 expp:78
p=31  el=311 c=7  g=17               p=139 el=557  c=2  g=2
p=37  el=149 c=2  g=2                p=149 el=1193 c=2  g=3
p=41  el=83  c=6  g=2                p=151 el=907  c=6  g=2
p=43  el=173 c=9  g=2 expp:26        p=157 el=1571 c=5  g=2 expp:94
p=47  el=283 c=2  g=3                p=163 el=653  c=2  g=2 expp:42
p=53  el=107 c=2  g=2 expp:34,10     p=167 el=2339 c=5  g=2 expp:122
p=59  el=709 c=3  g=2                p=173 el=347  c=2  g=2
p=61  el=367 c=2  g=6                p=179 el=359  c=2  g=7 expp:138
p=67  el=269 c=4  g=2                p=181 el=1087 c=2  g=3 expp:114,164
p=71  el=569 c=2  g=3                p=191 el=383  c=19 g=5
p=73  el=293 c=5  g=2                p=193 el=773  c=5  g=2 expp:108,172
p=79  el=317 c=2  g=2                p=197 el=3547 c=2  g=2 expp:62
p=83  el=167 c=3  g=5                p=199 el=797  c=3  g=2
p=89  el=179 c=3  g=2             
 \end{verbatim}
\normalsize

\subsubsection{Minimal prime $\ell \in {\mathscr L}_p$ 
such that ${\mathscr E}_\ell(p) = \ev$.} \label{exist}
The following program examines, for each $p$, the successive prime
numbers $\ell_i \in {\mathscr L}_p$, $i \geq 1$,
and returns the first one, $\ell_N$ (in ${\sf L}$ with its index ${\sf N}$), 
such that ${\mathscr E}_{\ell_N}(p)=\ev$.
Its existence is of course a strong conjecture, but the numerical 
results are extremely favorable to the existence of 
such primes; which strengthens the conjecture of Vandiver.
Moreover, since the integer $i(p) = \order {\mathscr E}_0(p)$ is 
rather small regarding $p$, as doubtless for $\order {\mathscr E}_\ell(p)$,
and can be both in $O\big(\frac{{\rm log}(p)}{{\rm log}({\rm log}(p))} \big)$,
the intersection ${\mathscr E}_\ell(p) \cap {\mathscr E}_0(p)$ 
may be easily empty {\it if these sets are independent}.
\smallskip
The experiments give the impression that the sets ${\mathscr E}_\ell(p)$ 
are random when $\ell$ varies and have no link with ${\mathscr E}_0(p)$.

\footnotesize
\smallskip
\begin{verbatim}
{forprime(p=3,100,c=lift(znprimroot(p));P=polcyclo(p)+Mod(0,p);
N=0;T=1;el=1;while(T==1,el=el+2*p;if(isprime(el)==1,N=N+1;g=znprimroot(el);
J=Mod(1,P);for(i=1,c-1,Ji=0;for(k=1,el-2,kk=znlog(1-g^k,g);
e=lift(Mod(kk+i*k,p));Ji=Ji-x^e);J=J*Ji);LJ=List;Jj=1;
for(j=1,p-1,Jj=lift(Jj*J);listinsert(LJ,Jj,j));T=0;for(m=1,(p-3)/2,n=2*m;
Sn=Mod(1,P);for(a=1,(p-1)/2,an=lift(Mod(a,p)^(n-1));Jan=component(LJ,an);
sJan=0;for(j=0,p-2,aj=lift(Mod(a*j,p));sJan=sJan+x^(aj)*component(Jan,1+j));
Sn=Sn*sJan);if(Sn==1,T=1;break));if(T==0,print(p," ",el," ",N);break))))}
\end{verbatim}

\normalsize
\smallskip
For $p<400$, we only write the primes $p, \ell_N$ for which $N>1$,
then a complete list for $p \in [409, 683]$:

\footnotesize
\smallskip
\begin{verbatim}
p   el    N    p   el    N          p   el    N    p   el    N    p   el     N
11  67    2    197 4729  2          409 4091  2    499 1997  1    601 25243  5
29  233   2    211 10973 4          419 839   1    503 3019  1    607 20639  3
43  431   2    223 6691  2          421 4211  1    509 4073  2    613 6131   1
53  743   2    227 5903  2          431 863   1    521 16673 1    617 30851  3
97  971   2    229 5039  2          433 5197  2    523 6277  2    619 17333  3
101 809   2    233 1399  2          439 4391  1    541 9739  1    631 6311   1
109 2399  2    251 4519  2          443 887   1    547 5471  1    641 1283   1
131 1049  3    277 4987  3          449 3593  1    557 24509 3    643 10289  2   
137 1097  2    337 6067  3          457 21023 3    563 7883  1    647 9059   1
157 7537  5    349 8377  2          461 9221  2    569 6829  1    653 1307   1
163 5869  3    367 3671  2          463 5557  1    571 5711  1    659 1319   1
167 7349  3    383 16087 4          467 2803  1    577 3463  2    661 14543  3
179 1433  2    389 14783 2          479 3833  1    587 8219  1    673 2693   1
181 1811  2    397 6353  2          487 1949  1    593 1187  1    677 5417   1
193 1931  2    401 10427 4          491 983   1    599 4793  1    683 4099   2
\end{verbatim}
\normalsize

\smallskip
The comparison with the table of exponents of $p$-irregularity
does not show any relation.

\subsection{Second main theorem}\label{classes}\label{ellell'}
Let $n_0$ be an exponent of $p$-irregularity;
put $\chi_0^{} = \omega^{n_0}$ and let $b_c(\chi_0^*) \sim p^e$, 
$e \geq 1$. If $\Cl_{\chi_0^*}$ is not cyclic, Remark \ref{remfond}
implies $n_0 \in \cap_{\ell \in {\mathscr L}_p} {\mathscr E}_\ell(p) \ne \ev$
and Theorem \ref{N} will hold.
Then we may assume $\Cl_{\chi_0^*} \simeq \Z/ p^e \Z$. 
We shall examine what happens when $\ell \in {\mathscr L}_p$ varies.

\smallskip
Let $\ell \in {\mathscr L}_p$ and let ${\mathfrak L}_{\chi_0^*}$
with ${\mathfrak L} \mid \ell$.
There are two cases as we have seen previously in the 
monogenous case:

\smallskip
\quad (i)  ${\mathfrak L}_{\chi_0^*}^{p^{e-1}}$ is $p$-principal.
Since $b_c(\chi_0^*) \sim p^e$, $e \geq 1$,
${\rm g}_c(\ell)_{\chi_0^*}$ is a global $p$th power in $K^\times$, 
whence ${\rm g}_c(\ell)_{\chi_0^*}$ is $p$-primary and 
$n_0 \in {\mathscr E}_\ell(p)$, but this does not lead
to an unramified cyclic extension of degree $p$ of $K_+$ 
of character~$\chi_0^{}$;

\smallskip
\quad (ii) ${\mathfrak L}_{\chi_0^*}^{p^{e-1}}$ is not $p$-principal
(from density theorem, such primes $\ell$ always exist).
Thus it defines a generator of $\Cl_{\chi_0^*}$ and Vandiver's
conjecture ``holds for $\chi_0^{} = \omega^{n_0}$'' if and only if 
${\rm g}_c(\ell)_{\chi_0^*}$ is not $p$-primary (Theorem \ref{thmp}).

\smallskip
If ${\rm g}_c(\ell)_{\chi_0^*} \equiv 1 \pmod p$
(counterexample to Vandiver's conjecture), 
we fix this $\ell$ once for all, and whatever the ideal 
${\mathfrak L}' \mid \ell' $, $\ell' \in {\mathscr L}_p$, we have 
${\mathfrak L}'_{\chi_0^*} = (z) \cdot {\mathfrak L}^r_{\chi_0^*}$,
with $z \in K^\times$ and $r \in [0, p^e-1]$, so:
$${\mathfrak L}'{}^{p^e u}_{\chi_0^*} = 
(z^{p^e u}) \cdot {\mathfrak L}^{r p^e u}_{\chi_0^*}\ \ \ \&
\ \ \  {\rm g}_c(\ell')_{\chi_0^*} \equiv 
{\rm g}_c(\ell)_{\chi_0^*}^r \equiv 1 \pmod p. $$

Whence, the exponent $n_0$ of $p$-irregularity is
a common exponent  of $p$-primarity for all 
$\ell \in {\mathscr L}_p$, giving $n_0 \in {\mathscr E}_0 (p) \cap 
\big(\cap_{\ell \in {\mathscr L}_p} \!{\mathscr E}_\ell(p) \big) \ne \ev$.
In other words, the existence of  an empty intersection
${\mathscr E}_{\ell_1}(p) \cap \cdots \cap {\mathscr E}_{\ell_N}(p)$
implies Vandiver's conjecture. We shall also prove the reciprocal,
that gives the new criterion:

\smallskip
\begin{theorem} \label{N}
Vandiver's conjecture holds if and only if there exist $N \geq 1$
and $\ell_1, \ldots , \ell_N \in {\mathscr L}_p$ such that 
${\mathscr E}_{\ell_1}(p) \cap \cdots \cap {\mathscr E}_{\ell_N}(p)=\ev$.
\end{theorem}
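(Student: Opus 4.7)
The equivalence will be proved by reducing each direction to a statement about individual exponents $n$ in the finite set of even integers in $[2,p-3]$. The ``if'' direction is essentially already in hand: if Vandiver's conjecture fails at $\chi_0^{} = \omega^{n_0}$, then Remark \ref{remfond} reduces us to Hypothesis \ref{hypo} with $\Cl_{\chi_0^*} \simeq \Z/p^e\Z$, $e \geq 1$, and the paragraph of \S\,\ref{ellell'} preceding the theorem shows $n_0 \in \mathscr{E}_{\ell'}(p)$ for \emph{every} $\ell' \in \mathscr{L}_p$. Hence no finite intersection can be empty.

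For the converse I plan to exhibit, for each even $n \in [2,p-3]$, one prime $\ell(n) \in \mathscr{L}_p$ with $n \notin \mathscr{E}_{\ell(n)}(p)$; then taking $\{\ell_i\}$ to be these $\ell(n)$'s yields $\bigcap_i \mathscr{E}_{\ell_i}(p) = \ev$. Put $\chi := \omega^n$. If $n \in \mathscr{E}_0(p)$, Vandiver gives $\Cl_\chi = 1$, and Chebotarev in $H/\Q$ supplies $\ell$ such that ${\mathfrak L}_{\chi^*}$ represents a generator of $\Cl_{\chi^*}$ (equivalently, ${\mathfrak L}_{\chi^*}^{p^{e-1}}$ is not $p$-principal). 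The argument already used in Lemma \ref{lem} and Theorem \ref{thmp} then applies: if ${\rm g}_c(\ell)_{\chi^*}$ were $p$-primary, the Kummer extension $K(\sqrt[p]{{\rm g}_c(\ell)_{\chi^*}})$ would be unramified and split over $K_+$ into a cyclic unramified extension of $K_+$ with non-trivial class in $\Cl_\chi$, contradicting Vandiver. Hence $n \notin \mathscr{E}_\ell(p)$.

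The harder case is $n \notin \mathscr{E}_0(p)$: then $\Cl_{\chi^*} = 1$ and $b_c(\chi^*)$ is a $p$-adic unit, so the previous class-group argument is vacuous and a finer invariant is needed. Here I would invoke Theorem \ref{cyclicity}: even though $\Cl_{\chi^*} = 1$, the $p$-ray class group $\Cl^{(p)}_{\chi^*}$ has order $p$. By Chebotarev in the corresponding $p$-ray class field over $K$, pick $\ell \in \mathscr{L}_p$ whose ${\mathfrak L}_{\chi^*}$ has non-trivial image in $\Cl^{(p)}_{\chi^*}$. Writing ${\mathfrak L}_{\chi^*} = (\beta)$ with $\beta \equiv 1 \pmod{\mathfrak{p}}$ (unique modulo $K^{\times p}$ since $E_{\chi^*} = 1$), this non-triviality is exactly $\beta \not\equiv 1 \pmod p$, i.e., $\beta$ is not $p$-primary. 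A short calculation in the local completion at $\mathfrak{p}$ gives $v_{\mathfrak{p}}(\beta^m - 1) = v_{\mathfrak{p}}(\beta - 1)$ for any $m \in \Z_p^{\times}$, so ${\rm g}_c(\ell)_{\chi^*} \equiv \beta^{b_c(\chi^*)} \pmod{K^{\times p}}$ is also not $p$-primary, giving $n \notin \mathscr{E}_\ell(p)$.

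The main obstacle is precisely this second case: when $\Cl_{\chi^*}$ is trivial, there is no a priori reason for the generators of the $p$-principal primes ${\mathfrak L}_{\chi^*}$ to ever fail $p$-primarity, and without Theorem \ref{cyclicity} one has no handle on the requisite $\F_p$ of extra information. The ray class group of modulus $(p)$ supplies exactly this datum, and Chebotarev makes it accessible by varying $\ell$.
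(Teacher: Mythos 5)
Your proof is correct, but the crucial direction (Vandiver implies an empty intersection) follows a genuinely different route from the paper's. The paper argues by contradiction: assuming every finite intersection is non-empty, it extracts a single exponent $n_0 \in \bigcap_{\ell \in {\mathscr L}_p} {\mathscr E}_\ell(p)$ and splits on $b_c(\chi_0^*) \bmod p$; the regular case $b_c(\chi_0^*) \not\equiv 0 \pmod p$ is handled over $K_+$ with ray class fields of modulus $(\ell)$, forcing the generating $\chi_0^{}$-unit $\varepsilon$ to be a local $p$th power at every $\ell \in {\mathscr L}_p$ with $\ell \not\equiv 1 \pmod {p^2}$, which is then contradicted by a Chebotarev choice in $K(\sqrt[p]{\varepsilon})\,\Q(\mu_{p^2})$. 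You instead construct, for each even $n$, a prime $\ell(n)$ with $n \notin {\mathscr E}_{\ell(n)}(p)$: for $n \in {\mathscr E}_0(p)$ your argument coincides with the paper's case (i) (and with Lemma \ref{lem} and Theorem \ref{thmp}), while for $n \notin {\mathscr E}_0(p)$ you exploit $\Cl^{(p)}_{\chi^*} \simeq \F_p$ (Theorem \ref{cyclicity}\,(i), which needs only $E_{\chi^*}=1$) together with Chebotarev in the modulus-$(p)$ ray class field of $K$ to find ${\mathfrak L}$ whose $\chi^*$-ray class is non-trivial; since $E_{\chi^*}=1$ the generator $\beta$ of ${\mathfrak L}_{\chi^*}$ is unique up to $p$th powers, Proposition \ref{varpi} identifies non-triviality of the ray class with non-$p$-primarity of $\beta$, and raising to the unit exponent $b_c(\chi^*)$ preserves $v_{\mathfrak p}(\beta - 1)$, so ${\rm g}_c(\ell)_{\chi^*}$ is not $p$-primary — all of which checks out. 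Your version is more constructive: it gives $N \leq \frac{p-3}{2}$ explicitly, avoids the auxiliary unit and the restriction $\ell \not\equiv 1 \pmod{p^2}$, shows the primes attached to regular exponents exist unconditionally, and recycles Theorem \ref{cyclicity}; the paper's version instead ties the hypothetical failure to the behaviour of the real unit $\varepsilon_{\chi_0^{}}$ at the auxiliary primes, which connects with the classical cyclotomic-unit criteria. The only point you should make explicit is that, under Vandiver, the cyclicity of $\Cl_{\chi^*}$ (needed to speak of a generator when $n \in {\mathscr E}_0(p)$, and tacitly in invoking Theorem \ref{cyclicity}) follows from the reflection theorem (Corollary \ref{reflection}), so no separate appeal to Hypothesis \ref{hypo} is required in that direction.
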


\begin{proof} 
It remains to prove that Vandiver's conjecture implies
such an empty intersection.
Assume, on the contrary, that for all $N \geq 1$ and all sets 
$\{\ell_1, \ldots , \ell_N\} \subset {\mathscr L}_p$, one has
${\mathscr E}_{\ell_1}(p) \cap \cdots \cap {\mathscr E}_{\ell_N}(p) \ne \ev$.

\smallskip
Since ${\mathscr X}_+$ is finite, there exists such an
$n_0$ in $\bigcap_{\ell \in {\mathscr L}_p} {\mathscr E}_{\ell}(p)$
(if $\cap_{\ell \in {\mathscr L}_p} {\mathscr E}_{\ell}(p) = \ev$ then for all even 
$n \in [2, p-3]$ there exists $\ell(n)$ such that $n \notin {\mathscr E}_{\ell(n)}(p)$
whence $\bigcap_{n \in [2, p-3]\, {\rm even}}\, {\mathscr E}_{\ell(n)}(p) = \ev$ (absurd)).
This means that for the fixed character $\chi_0^{} := \omega^{n_0}$, we
have the property:
$$\hbox{${\rm g}_c(\ell)_{\chi_0^*} \equiv 1\!\! \pmod p$, \ 
for all $\ell \in {\mathscr L}_p$.}$$

To simplify, put $\alpha(\ell) := {\rm g}_c(\ell)_{\chi_0^*}$ and 
consider the extensions $K(\sqrt[p]{\alpha(\ell)}) / K$; these extensions, 
with Galois groups of character $\chi_0^{}$, are decomposed over 
$K_+$ into cyclic extensions $L_+(\ell)/K_+$ (possibly trivial), and are
$\ell$-ramified since $(\alpha(\ell) ) = 
{\mathfrak L}_{\chi_0^*}^{b_c(\chi_0^*)}$ with 
$\alpha(\ell)\equiv 1 \pmod p$ (non-ramification at $p$). 
Examine the two possibilities about $b_c(\chi_0^*)$:

\smallskip
\quad (i) $b_c(\chi_0^*) \equiv 0 \pmod p$. Then $\alpha(\ell)$ is,
for all $\ell$, a $p$-primary pseudo-unit, and choosing $\ell$
such that ${\mathfrak L}_{\chi_0^*}$ generates $\Cl_{\chi_0^*}$
(which is cyclic since $\Cl_{\chi_0^{}}=1$), the extension 
$L_+(\ell)/K_+$ is unramified of degree $p$ (absurd).

\smallskip
\quad (ii) $b_c(\chi_0^*) \not\equiv 0 \pmod p$. Then $L_+(\ell)/K_+$ is, 
for all $\ell$, a {\it $\ell$-ramified degree $p$ cyclic extension
of character $\chi_0^{}$}. We restrict ourselves to primes 
$\ell \not \equiv 1 \pmod {p^2}$ and consi\-der the $p$-ray class
fields, $H_+(\ell)$ over $K_+$, of modulus $(\ell)$; we have
$L_+(\ell) \subseteq H_+(\ell)$. Since $\Cl_+ = 1$, 
${\rm Gal}(H_+(\ell)/K_+) \simeq (P_+/P_+(\ell)) \otimes \Z_p$, where
$P_+$ is the group of principal ideals prime to $\ell$ of $K_+$
and $P_+(\ell)$ the subgroup of $P_+$ of ideals generated by an
element $x \equiv 1 \pmod \ell$. 

\smallskip
From the $G$-modules exact sequence 
$1 \to E_+/E_+(\ell) \to\hbox{$ \plus_{{\mathfrak L}_+ \mid \ell}$ }
\F_\ell^\times \to  P_+/P_+(\ell) \to 1$,
where $E_+(\ell) := \{\varepsilon \in E_+, \  \varepsilon \equiv 1 \pmod \ell\}$, 
we get (for $\ell \not \equiv 1 \pmod {p^2}$):
$$1 \to (E_+/E_+(\ell))_{\chi_0^{}}  \too (\Z/p\Z)_{\chi_0^{}}  \too 
{\rm Gal}(H_+(\ell)/K_+)_{\chi_0^{}} \to 1. $$

Since ${\rm Gal}(H_+(\ell)/K_+)_{\chi_0^{}}$ is, at least, of order $p$, the generating 
$\chi_0^{}$-unit, $\varepsilon_{\chi_0^{}} =: \varepsilon $, is in $E_+(\ell)_{\chi_0^{}}$, 
thus locally a $p$th power at $\ell$, for all $\ell \in {\mathscr L}_p$, 
$\ell \not \equiv 1 \pmod {p^2}$. Thus $\ell$ totally splits in $K(\sqrt[p]{\varepsilon})/K$.
Let $M$ be the compositum $K(\sqrt[p]{\varepsilon})\cdot K_1$, where
$K_1=\Q(\mu_{p^2})$; this Galois field $M$ only depends on $p$ and $\chi_0^{}$
and the primes $\ell \not \equiv 1 \pmod {p^2}$ are inert in $K_1/K$.
Then choose $\ell$ such that the decomposition group 
of ${\mathcal L} \mid \ell$ does not fix $K(\sqrt[p]{\varepsilon})$ (since 
${\rm Gal}(M/K) \simeq (\Z/p\Z)^2$, this allows $p-1$ possibilities).
Thus $\ell$ is inert in $K(\sqrt[p]{\varepsilon})/K$ (contradiction).

\smallskip
Whence the reciprocal. 
\end{proof}

\begin{remark}
This theorem suggests that if the sets ${\mathscr E}_{\ell}(p)$ are random 
when $\ell$ varies and independent, the (conjectural) triviality of $\Cl_+$ is
{\it a consequence} of a natural $p$-adic property of Gauss sums and the
statement does exist with $N=1$.

\smallskip
On the contrary, the structure of $\Cl_-$ is independent of the Gauss 
sums because the {\it even components} ${\rm g}_c(\ell)_{\chi}$ are 
global $p$th powers for all $\ell \in {\mathscr L}_p$ (Remark 
\ref{gooddef}\,(ii)) and do not yield any obstruction ! Thus the cases
of non-triviality of $\Cl_-$ may follow standard probabilities under
the monogenous case.
\end{remark}

\subsection{Study of the case $p=37$}
So it is fundamental to see if the sets ${\mathscr E}_\ell(p)$ 
are independent (or not) of the choice of $\ell \in {\mathscr L}_p$
for ${\mathscr E}_0(p) \ne \ev$.
We analyse the case of $p=37$ 
($n_0=32$) giving $\order \Cl_{\omega^5} = 37$
and compute (in ${\sf expp}$) the sets ${\mathscr E}_\ell(37)$ 
when $\ell \in {\mathscr L}_{37}$ varies. 
If $n_0 \in {\mathscr E}_\ell(37)$, this means that 
${\mathfrak L}_{\chi*}$ is necessarily $37$-principal and then
${\rm g}_c(\ell)_{\omega^5} \in K^{\times 37}$:

\footnotesize
\smallskip
\begin{verbatim}
{p=37;c=lift(znprimroot(p));P=polcyclo(p)+Mod(0,p);X=Mod(x,P);
for(i=1,100,el=1+2*i*p;if(isprime(el)!=1,next);g=znprimroot(el);
print("el=",el," g=",lift(g));J=1;for(i=1,c-1,Ji=0;for(k=1,el-2,kk=znlog(1-g^k,g);
e=lift(Mod(kk+i*k,p));Ji=Ji-X^e);J=J*Ji);LJ=List;Jj=1;for(j=1,p-1,Jj=lift(Jj*J);
listinsert(LJ,Jj,j));for(m=1,(p-3)/2,n=2*m;Sn=Mod(1,P);for(a=1,(p-1)/2,
an=lift(Mod(a,p)^(n-1));Jan=component(LJ,an);sJan=Mod(0,P);
for(j=0,p-2,aj=lift(Mod(a*j,p));sJan=sJan+x^(aj)*component(Jan,1+j));
Sn=Sn*sJan);if(Sn==1,print("     exponent of p-primarity: ",n))))}

el=149      g=2                          el=3331     g=3   expp: 22
el=223      g=3                          el=3701     g=2
el=593      g=3                          el=3923     g=2
el=1259     g=2                          el=4219     g=2   expp: 18,16
el=1481     g=3    expp: 30              el=4441     g=21
el=1777     g=5                          el=4663     g=3
el=1999     g=3                          el=5107     g=2
el=2221     g=2                          el=5477     g=2
el=2591     g=7    expp: 34              el=6143     g=5   expp: 28
el=2887     g=5                          el=6217     g=5
el=3109     g=6                          el=6661     g=6
el=3257     g=3                          el=6883     g=2
-------------------------------------------------------------------------
el=742073   g=3    expp: 12              el=768343   g=11  expp: 18
el=742369   g=7                          el=768491   g=10 
el=742591   g=3                          el=768787   g=2   expp: 20
el=743849   g=3                          el=769231   g=11  expp: 24
el=743923   g=3    expp: 16              el=769453   g=2   expp: 30
el=744071   g=22                         el=772339   g=3
el=744811   g=10                         el=773153   g=3   expp: 14
el=744959   g=13   expp: 10              el=774337   g=5   expp: 28
el=745033   g=10   expp: 16              el=774929   g=3   expp: 18
el=745181   g=2                          el=775669   g=10  expp: 18
el=745477   g=2                          el=776483   g=2
el=745699   g=2                          el=776557   g=2   expp: 20
el=746069   g=2                          el=777001   g=31  expp: 18,28
el=746957   g=2                          el=778111   g=11
el=747401   g=3                          el=778333   g=2   expp: 28
el=747919   g=3                          el=778777   g=5
el=748807   g=6    expp: 22              el=779221   g=2 
el=749843   g=2    expp: 34              el=779591   g=7
el=750287   g=5                          el=779887   g=10  expp: 18
el=750509   g=2    expp: 14,22           el=780257   g=3   expp: 8
el=751027   g=3                          el=780553   g=10
el=751841   g=3    expp: 14,16,24        el=781367   g=5   expp: 34
el=752137   g=10   expp: 8           *el=781589   g=2   expp: 32
el=752359   g=3    expp: 18              el=782107   g=2 
el=752581   g=2    expp: 16              el=782329   g=13  expp: 18
el=752803   g=2    expp: 22,32           el=782921   g=3   expp: 20
el=753617   g=3                          el=783143   g=5
el=753691   g=11   expp: 16              el=783661   g=2
el=753839   g=7    expp: 4,22            el=784327   g=3 
el=754283   g=2                          el=784697   g=3 
el=755171   g=6                          el=784919   g=7 
el=755393   g=3    expp: 22              el=785363   g=2 
el=756281   g=3    expp: 2               el=786251   g=2 
el=756799   g=15   expp: 18              el=786547   g=2
el=757243   g=2                          el=787139   g=2   expp: 20
el=757909   g=2    expp: 16              el=787361   g=6
el=758279   g=7                          el=787879   g=6   expp: 10,18,20
el=758501   g=2    expp: 18              el=788027   g=2   expp: 34
el=759019   g=2                          el=789137   g=3   expp: 24
el=759167   g=5    expp: 12              el=790099   g=2
el=759463   g=3                          el=791209   g=7
el=759833   g=3    expp: 4               el=791431   g=12
el=760129   g=11                         el=791801   g=3 
el=760499   g=2                         *el=792023   g=5   expp: 32
el=762053   g=2                          el=792689   g=3
el=762571   g=10                         el=793207   g=5
el=763237   g=2                          el=795427   g=2
el=764051   g=2                         *el=795649   g=22  expp: 2,32
el=764273   g=3                          el=795797   g=2
el=764717   g=2    expp: 2               el=795871   g=3
el=765383   g=5                          el=796759   g=3 
el=765827   g=2    expp: 34              el=796981   g=7 
el=766049   g=3    expp: 22              el=797647   g=3 
el=766937   g=3    expp: 34              el=797869   g=10
el=767381   g=2    expp: 18              el=798461   g=2
el=767603   g=5    expp: 34              el=798757   g=2 
el=767677   g=5                          el=800089   g=7   expp: 20
\end{verbatim}

\normalsize
\smallskip
For $\ell = 149, 223, 593, 1259, 1777, \ldots\,$, ${\mathscr E}_\ell(37) = \ev$,
which proves the Vandiver conjecture for $p=37$ a great lot of times.
For $\ell = 1481$ one finds a $p$-primarity for 
$\chi^* = \omega^7$ ($\chi = \omega^{30} \ne \omega^{32}$).
Theorem \ref{N} applies at will.

\smallskip
It remains to give statistics about the $p$-principality (or not) 
of the ${\mathfrak L}_{\chi_0^*}$ when $\ell \in {\mathscr L}_p$ varies.
For $p=37$, ${\mathfrak L}_{\chi_0^*}$ is $37$-principal if and only if 
${\mathfrak L}$ is principal since the class number of $K$ is $h = 37$.

\subsubsection{Table of the classes of 
${\mathfrak L}$ for $p=37$.}\label{classes37}

We give a table with a generator of ${\mathfrak L}$ in the principal 
cases (indicated by $*$). Otherwise, the class of ${\mathfrak L}$
is of order $37$ in $K$.
We only write the cases ${\mathscr E}_\ell (37) \ne \ev$:

\footnotesize
\smallskip
\begin{verbatim}
{p=37;c=lift(znprimroot(p));P=polcyclo(p);K=bnfinit(P,1);P=P+Mod(0,p);
X=Mod(x,P);Lsplit=List;N=0;for(i=1,2000,el=1+2*i*p;if(isprime(el)!=1,next);
N=N+1;listinsert(Lsplit,el,N));for(j=1,N,el=component(Lsplit,j);
F=bnfisintnorm(K,el);if(F!=[],print("el=",el," ",component(F,1)));
g=znprimroot(el);J=1;for(i=1,c-1,Ji=0;for(k=1,el-2,kk=znlog(1-g^k,g);
e=lift(Mod(kk+i*k,p));Ji=Ji-X^e);J=J*Ji);LJ=List;
Jj=1;for(j=1,p-1,Jj=lift(Jj*J);listinsert(LJ,Jj,j));for(m=1,(p-3)/2,
n=2*m;Sn=Mod(1,P);for(a=1,(p-1)/2,an=lift(Mod(a,p)^(n-1));
Jan=component(LJ,an);sJan=Mod(0,P);for(j=0,p-2,aj=lift(Mod(a*j,p));
sJan=sJan+x^(aj)*component(Jan,1+j));Sn=Sn*sJan);
if(Sn==1,print("el=",el," expp:",n))))}

el=1481    expp: 30                     el=56167   expp: 10,14,26                            
el=2591    expp: 34                     el=57203   expp: 34                                  
el=3331    expp: 22                     el=58313   expp: 28
el=4219    expp: 16,18                  el=58757   expp: 16,18
el=6143    expp: 28                     el=58831   expp: 24,30
el=7993    expp: 16,20                  el=59497   expp: 28
el=8363    expp: 8                      el=61051   expp: 10
el=9769    expp: 20                     el=62383   expp: 2
el=10657   expp: 4,18,26                el=62753   expp: 2
el=12433   expp: 20                     el=63493   expp: 2
el=13099   expp: 28                     el=64381*  expp: 6,32  [x^20+x^9+x]   
el=14431   expp: 4,14,22                el=66749   expp: 30
el=17021   expp: 6                      el=67489*  expp: 30,32 [x^24-x^3-x^2]     
el=17909   expp: 30                     el=67933   expp: 6
el=18131   expp: 22                     el=68821*  expp: 32    [x^15-x^9+x^4]     
el=19463   expp: 6                      el=69931   expp: 12
el=20129   expp: 6                      el=71411   expp: 4
el=21017   expp: 2,4                    el=72817   expp: 28
el=21313   expp: 18                     el=74149   expp: 2
el=21757   expp: 8                      el=75407   expp: 10
el=22349   expp: 8                      el=75629   expp: 12, 20
el=23459   expp: 6                      el=76961   expp: 14
el=23977   expp: 26                     el=78737   expp: 28
el=25087   expp: 26                     el=79181   expp: 10
el=25457   expp: 30                     el=80513   expp: 16, 26
el=29009   expp: 8,24                   el=81031   expp: 18, 34
el=30859   expp: 2                      el=82067   expp: 34
el=32783*  expp: 32 [x^11+x^3+x]        el=83621   expp: 34
el=33301   expp: 30                     el=83843   expp: 2
el=33967   expp: 26                     el=84731   expp: 6
el=36187   expp: 8                      el=85027   expp: 26
el=37889   expp: 16                     el=86729   expp: 22
el=38629   expp: 22                     el=86951   expp: 8
el=40627   expp: 30                     el=87691   expp: 24
el=40849   expp: 6                      el=91243   expp: 22, 34
el=42773   expp: 4                      el=91909   expp: 30
el=45289   expp: 8                      el=94351   expp: 10
el=45659   expp: 26                     el=94573   expp: 18
el=48619   expp: 8                      el=95239   expp: 18, 28
el=48989   expp: 20                     el=96497   expp: 10
el=51283   expp: 14,16                  el=98347   expp: 28
el=51431   expp: 20                     el=98939   expp: 30
el=53281   expp: 16                     el=99679   expp: 10, 22
el=55057   expp: 20                     el=100049  expp: 14
\end{verbatim}

\normalsize
\smallskip
Give some examples (${\mathfrak L}^{1+s_{-1}}$ is always principal giving
an easy characterization):

\smallskip
(ii) Non-principal case ${\mathfrak L} \mid 149$.
The instruction {$\sf bnfisintnorm(K,149^k)$}:

\footnotesize
\smallskip
\begin{verbatim}
{P=polcyclo(37);K=bnfinit(P,1);for(k=1,2,print(bnfisintnorm(K,149^k)))}
\end{verbatim}

\normalsize
\smallskip
yields an empty set for $k=1$ (since ${\mathfrak L}$ is not principal)
and, for $k=2$, it gives (with $x=\zeta_{37}$) the $18$ conjugates 
of the real integer:

\footnotesize
\smallskip
\begin{verbatim}
 -2*x^35-2*x^34-x^32-2*x^31+x^29-x^28-2*x^27-2*x^24-x^23+x^22-2*x^20-x^19
                     -x^17-2*x^16+x^14-x^13-2*x^12-2*x^9-x^8+x^7-2*x^5-x^4-2*x^2-2*x
\end{verbatim}

\normalsize
\smallskip
(i) Principal case ${\mathfrak L} \mid 32783$.  
The principal ${\mathfrak L}$ are rare 
(which comes from density theorems); the first one is
${\mathfrak L}=(\zeta_{37}^{11} + \zeta_{37}^3 + \zeta_{37})$ where 
$\ell = 32783$. Thus in that case, in the relation
${\mathfrak L}_{\chi_0^*}^{b_c({\chi_0^*})} 
= ({\rm g}_c(\ell)_{\chi_0^*})$, the number
${\rm g}_c(\ell)_{\chi_0^*}$ must be a global $37$th power (which
explains that one shall find the exponent of $37$-primarity 
$n_0=32$ equal to that of $37$-irregularity in the table); 
unfortunately, the data are too large to be given. 

\smallskip
Nevertheless, the reader can easily compute 

${\sf factor(norm(Sn)) = 32783^{37 \cdot 16 \cdot 9}}$ 
and use ${\sf K=bnfinit(P,1)}; {\sf idealfactor(K,Sn)}$,

which gives the $37$th power of ${\mathfrak L} \mid 32783$. 

We obtain the following 
excerpts of the table (up to $10^6$) of principal cases:

\footnotesize
\smallskip
\begin{verbatim}
el=32783    expp:32      el=64381    expp:6,32         el=67489    expp:30,32
el=68821    expp:32      el=108929   expp:32           el=132313   expp:32
el=325379   expp:10,32   el=332039   expp:6,10,14,32   el=351797   expp:32
el=364451   expp:28,32   el=387169   expp:32           el=396937   expp:32
el=960151   expp:32      el=973397   expp:32           el=983239   expp:32
el=1000777  expp:32      el=1002109  expp:2,32         el=1040959  expp:20,32
\end{verbatim}

\normalsize
\subsubsection{Densities of the exponents of $p$-primarity.}\label{vary}
The following program intends to show that all exponents of $p$-primarity
are obtained, with (perhaps) some specific densities, taking sufficientely 
many $\ell \in {\mathscr L}_p$
(each even $n \in [2, p-3]$, such that ${\rm g}_c(\ell)_{\omega^{p-n}}$
is $p$-primary for some new $\ell$, is counted in the $(n/2)$th 
component of the list ${\sf Eel}$). 

\footnotesize
\smallskip
\begin{verbatim}
{p=37;c=lift(znprimroot(p));P=polcyclo(p)+Mod(0,p);X=Mod(x,P);Nel=0;Npp=0;Eel=List;
for(j=1,(p-3)/2,listput(Eel,0,j));for(i=1,1000,el=1+2*i*p;if(isprime(el)!=1,next);
g=znprimroot(el);Nel=Nel+1;J=1;for(i=1,c-1,Ji=0;for(k=1,el-2,kk=znlog(1-g^k,g);
e=lift(Mod(kk+i*k,p));Ji=Ji-X^e);J=J*Ji);LJ=List;Jj=1;for(j=1,p-1,Jj=lift(Jj*J);
listinsert(LJ,Jj,j));for(m=1,(p-3)/2,n=2*m;Sn=Mod(1,P);for(a=1,(p-1)/2,
an=lift(Mod(a,p)^(n-1));Jan=component(LJ,an);sJan=Mod(0,P);for(j=0,p-2,
aj=lift(Mod(a*j,p));sJan=sJan+x^(aj)*component(Jan,1+j));Sn=Sn*sJan);
if(Sn==1,Npp=Npp+1;listput(Eel,1+component(Eel,n/2),n/2);
print(Nel," ",Npp," ",el," ",Eel))))}
\end{verbatim}

\normalsize
\smallskip
In the first column, one shall find the index $i$ (in ${\sf Nel}$) of the prime 
$\ell_i$ considered; if some index $i$ is missing, this means that
${\mathscr E}_{\ell_i} (p) = \ev$. The second integer gives the whole 
number of exponents of $p$-primarity obtained at this step (in ${\sf Npp}$); 
then the third one is $\ell_i$ (in ${\sf el}$).
In some cases, a prime $\ell$ gives rise to several 
exponents of $p$-primarity

\smallskip
(i) Results for $p=37$. The end of the table for the selected interval is:

\footnotesize
\smallskip
\begin{verbatim}
Nel  Npp    el 
3015 1426 1414067  [83,95,84,91,80,80,86,83,92,83,97,76,83,78,85,74,76]
3015 1427 1414067  [83,95,84,91,80,80,86,83,92,83,97,76,83,78,86,74,76]
3027 1428 1419839  [83,95,84,91,80,80,86,83,92,83,98,76,83,78,86,74,76]
3030 1429 1420949  [83,95,84,91,80,80,86,83,92,83,98,76,83,78,86,75,76]
3032 1430 1421911  [83,95,85,91,80,80,86,83,92,83,98,76,83,78,86,75,76]
3033 1431 1422133  [83,95,86,91,80,80,86,83,92,83,98,76,83,78,86,75,76]
3042 1432 1428127  [83,96,86,91,80,80,86,83,92,83,98,76,83,78,86,75,76]
3889 1819 1863913[106,114,108,113,99,111,115,100,117,113,116,93,103,97,108,103,103]
3894 1820 1865911[106,114,108,114,99,111,115,100,117,113,116,93,103,97,108,103,103]
3898 1821 1868501[106,114,108,114,100,111,115,100,117,113,116,93,103,97,108,103,103]
3900 1822 1869389[106,114,108,114,100,112,115,100,117,113,116,93,103,97,108,103,103]
3900 1823 1869389[106,114,108,114,100,112,115,101,117,113,116,93,103,97,108,103,103]
3900 1824 1869389[106,114,108,114,100,112,115,101,117,113,116,93,104,97,108,103,103]
\end{verbatim}

\normalsize
The penultimate column corresponds to the exponent of 
$37$-irregularity $n_0=32$; since there is no counterexamples
to Vandiver's conjecture, when this component increases,
this means that the new $\ell$ gives rise to a principal ${\mathfrak L}$
for which ${\rm g}_c(\ell)_{\omega^5}$ is a $37$th power.

\smallskip
(ii) Results for $p=157$.
For $p=157$ (exponents of $p$-irregularity $62, 110$),
one finds the partial analogous information after 
$590$ distinct primes $\ell \in {\mathscr L}_p$ tested  
(proving also Vandiver's conjecture for a lot of times):

\footnotesize
\smallskip
\begin{verbatim}
Nel  Npp    el 
590 309 1161487 [9,3,2,6,8,3,1,4,5,10,3,1,3,1,6,3,4,4,2,2,1,2,5,
                5,3,2,2,1,5,7,6,2,2,1,5,5,5,4,4,3,3,4,5,4,5,5,5,5,5,3,6,1,6,3,5,4,5,
                                            0,2,3,5,7,3,3,3,2,4,4,7,6,6,5,6,1,7,4,7]
590 310 1161487 [9,3,2,6,8,3,1,4,5,10,3,1,3,1,6,3,4,4,2,2,1,2,5,
                5,3,2,2,1,5,7,6,2,2,1,5,5,5,4,4,3,3,4,5,4,5,6,5,5,5,3,6,1,6,3,5,4,5,
                                            0,2,3,5,7,3,3,3,2,4,4,7,6,6,5,6,1,7,4,7]
590 311 1161487 [9,3,2,6,8,3,1,4,5,10,3,1,3,1,6,3,4,4,2,2,1,2,5,
                5,3,2,2,1,5,7,6,2,2,1,5,5,5,4,4,3,3,4,5,4,5,6,5,5,5,3,6,1,6,3,5,4,5,
                                            0,2,3,5,7,3,3,3,2,4,5,7,6,6,5,6,1,7,4,7]
\end{verbatim}

\normalsize
\smallskip
The remaining column of zeros (for $n/2=58$) stops at the following lines:

\footnotesize
\smallskip
\begin{verbatim}
602 318 1185979 [9,3,2,6,8,3,2,4,6,10,3,1,3,1,6,4,4,4,2,2,1,2,5,
                5,3,2,2,1,5,7,6,3,2,1,5,5,5,4,4,3,3,4,5,4,5,6,5,5,5,3,6,1,6,4,5,4,6,
                                            0,2,3,5,7,3,3,3,3,4,5,7,6,6,5,6,1,7,4,7]
602 319  1185979 [9,3,2,6,8,3,2,4,6,10,3,1,3,1,6,4,4,4,2,2,1,2,5,
                5,3,2,2,1,5,7,6,3,2,1,5,5,5,4,4,3,3,4,5,4,5,6,5,5,5,3,6,1,6,4,5,4,6,
                                            1,2,3,5,7,3,3,3,3,4,5,7,6,6,5,6,1,7,4,7]
602 320 1185979 [9,3,2,6,8,3,2,4,6,10,3,1,3,1,6,4,4,4,2,2,1,2,5,
                5,3,2,2,1,5,7,6,3,2,1,5,5,5,4,4,3,3,4,5,4,5,6,5,5,5,3,6,1,6,4,5,4,6,
                                            1,2,4,5,7,3,3,3,3,4,5,7,6,6,5,6,1,7,4,7]
\end{verbatim}

\normalsize
These numbers may depend on the orders of $\omega^n$ and/or 
$\omega^{p-n}$, but this needs to be clarified taking much 
$\ell \in {\mathscr L}_p$.

\subsubsection{Vandiver's conjecture and $p$-adic regulator of $K_+$.}
\label{regul}
We return to the case $p=37$ and $n_0=32$.
We see that $\omega^{32}$ 
is a character of order $9$, hence a character of the real subfield $k_9$
of degree $9$, which is such that ${\mathcal T}_{k_9} \ne 1$ 
from the reflection relation \eqref{spiegel}; 
so, $k_9$ admits a cyclic $37$-ramified extension of degree $37$
which is not unramified. To verify, we use
\cite[Program I]{Gr2}, for real fields, which indeed gives 
$\order {\mathcal T}_{k_9} = 37$ (${\sf nt}$ must verify
${\sf p^{nt}} > p^{t}$, the exponent of ${\mathcal T}$):

\footnotesize
\smallskip
\begin{verbatim}
{p=37;n=32;d=(p-1)/gcd(p-1,n);P=polsubcyclo(p,d);K=bnfinit(P,1);nt=6;
Kpn=bnrinit(K,p^nt);Hpn=component(component(Kpn,5),2);L=List;
e=component(matsize(Hpn),2);R=0;for(k=1,e-1,c=component(Hpn,e-k+1);
if(Mod(c,p)==0,R=R+1;listinsert(L,p^valuation(c,p),1)));if(R>0,
print("rk(T)=",R,"  K is not ",p,"-rational ",L));
if(R==0,print("rk(T)=",R,"  K is ",p,"-rational"))}

rk(T)=1   K is not 37-rational    List([37])
\end{verbatim}

\normalsize
\smallskip
We find here another interpretation of the reflection theorem since
we have the typical formula
$\order {\mathcal T}_+ = \order \Cl_+ \cdot  \order {\mathcal R}_+$,
where the $p$-group ${\mathcal R}_+$ is the normalized 
$p$-adic regulator of $K_+$ \cite[Proposition 5.2]{Gr5}.
Whence $\order {\mathcal T}_\chi = \order \Cl_ \chi \cdot 
\order {\mathcal R}_\chi$ and ${\mathcal R}_{\chi*}=1$, for all 
$\chi \in {\mathscr X}_+$; but we have 
$\order {\mathcal T}_{\chi*} = \order \wt \Cl_{\chi*}$ 
for the subgroup $\wt \Cl_{\chi*}$ of $\Cl_{\chi*}$.
The above data shows that the relation $\order {\mathcal T}_{\chi_0^{}} = 37$ 
comes from $\order {\mathcal R}_{\chi_0^{}}=37$, which is not surprising:

\begin{remark}
We have the analytic formula $\order \Cl_{\chi_0^{}} = 
\order (E_{\chi_0^{}} / \langle \eta_{\chi_0^{}} \rangle)$, where $\eta$
is a suitable cyclotomic unit; so a classical method (explained
in \cite[Corollary 8.19]{Wa}, applied in \cite{BH,HHO} 
and developped in \cite{T1,T2}) 
consists in finding $\ell \in {\mathscr L}_p$ such that 
$\eta_{\chi_0^{}}$ is not a local $p$th power at $\ell$ proving 
Vandiver's conjecture at $\chi_0^{}$; so when we find that 
${\mathcal R}_{\chi_0^{}} \ne 1$ (with $\Cl_{\chi_0^{}}=1$), this
means that $\eta_{\chi_0^{}}$ generates $E_{\chi_0^{}}$ and is 
a local $p$th power at $p$ by $p$-primarity, 
so that $K\big (\sqrt[p]{\eta_{\chi_0}} \big)$ is contained in the 
$\chi_0^*$-component of the $p$-Hilbert class field of $K$. 

We shall give in \S\,\ref{resymbol} some insights in this direction
to state new heuristics for the probability of $p$-primarity of
${\rm g}_c(\ell)_{\chi_0^*}$ to be at most $\frac{O(1)}{p^2}$.
\end{remark}

\section{Heuristics -- Probability of a counterexample}

\subsection{Use of classical standard probabilities}
We may suppose in a first approximation 
that, for a given $p$, the sets ${\mathscr E}_\ell(p)$
of exponents of $p$-primarity of primes $\ell \in {\mathscr L}_p$, 
are random with the same behavior as for the 
set ${\mathscr E}_0(p)$ of exponents of $p$-irregularity.
More precisely, assume, as in Washington's book (see in
\cite{Wa}, the Theorem 5.17 and some statistical 
computations), that for given primes $p$ and $\ell \in {\mathscr L}_p$, 
the probabilities of a cardinality $k$ is
$\binom{N} {j} \cdot \big(1-\frac{1}{p} \big)^{N-j} \cdot \big(\frac{1}{p} \big)^j$
(where $N:= \frac{p-3}{2}$).
This would imply that, for $p$ given,
${\mathscr E}_\ell(p) \ne \ev$ for many $\ell \in {\mathscr L}_p$, but that
${\mathscr E}_\ell(p) = \ev$ in a proportion close to $e^{- \frac{1}{2}}$,
which is in accordence with previous tables.
Then the probability, for $p$ and $\ell$ given, of 
${\mathscr E}_0(p) \cap {\mathscr E}_\ell(p) \ne \ev$
with cardinalities $j \in [0, N]$ and $k \in [0, N]$ fixed, is:
$$1 - \frac{(N-k) ! \cdot (N-j) !}{N ! \cdot (N-k-j) !}. $$

So, an approximation of the whole probability of 
${\mathscr E}_0(p) \cap {\mathscr E}_\ell(p) \ne \ev$ is:
\begin{equation}\label{stupid}
\sm_{j, \, k \geq 0} 
\binom{N} {j} \binom{N} {k} \cdot \Big(1-\frac{1}{p} \Big)^{2\,N-j-k} 
\!\cdot \Big(\frac{1}{p} \Big)^{j+k}\! \cdot 
 \Big( 1 - \frac{(N-k) ! \cdot (N-j) !}{N ! \cdot (N-k-j) !}  \Big). 
\end{equation}

The computations show that this expression is around $\frac{1}{2\,p}$,
which does not allow to conclude easily for a single $\ell$, but {\it this 
does not take into account} the ``infiniteness'' of ${\mathscr L}_p$ giving, 
a priori, {\it independent informations}, but limited by the Theorem \ref{cyclicity} 
on periodicities due to the density theorem (see the Weil interpretation
of Jacobi sums defining Hecke Gr\"ossencharacters \cite[Theorem, p. 489]{We} 
where a module of definition of our Jacobi sums is $p^2$, which may give 
an order of magnitude of the cardinality of this ``infiniteness''). So
this must be put in relation with the Theorem \ref{thmp} to
characterize ``non-Vandiver''.

\subsection{New heuristics and probabilities}\label{new}
Many reasons imply that the generic probability $\frac{1}{p}$
must be replaced by a much lower one:

\subsubsection{Results from ${\rm K}$-theory.}
For some characters $\chi \in {\mathscr X}_+$,
of the form $\chi =: \omega^{p-(1+h)}$, for small 
$h=2, 4,\ldots\,$, one may prove 
that $\Cl_{\omega^{p-(1+h)}} = 1$, as the case of $\Cl_{\omega^{p-3}} = 1$
proved unconditionally by Kurihara \cite{K}; then Soul\'e proved in \cite{So1} that for 
$n \in [2, p-3]$ even, $\Cl_{\omega^{p-n}} = 1$ for all $p$ large 
enough (see also \cite{Gh, So2, BEH} among other references 
applying the same approach via ${\rm K}$-theory). 
Unfortunately these bounds are not usable in practice, 
but demonstrate the existence of a fundamental general principle.

\subsubsection{Archimedean aspects.}
At the opposite, for $\chi \in {\mathscr X}_+$ of 
small order, $\Cl_{\chi}$ may be trivial because of the 
``archimedean'' order of magnitude of the whole class number of
the subfield of $K_+$ fixed by ${\rm Ker}(\chi)$
(which is proved for the quadratic case when $p\equiv 1 \pmod 4$, the 
cubic case  when $p\equiv 1 \pmod 3$); see the tables of Schoof \cite{Sc} 
for serious arguments about the order of magitude of the whole class number.
Moreover, we have the $p$-rank
$\epsilon$-conjecture for $p$-class groups \cite{EV} that we 
state for the real abelian fields $k_d$ {\it of constant degree} $d$, 
of discriminant $D=p^{d-1}$, when $p \equiv 1\!\! \pmod d$ increases:

\medskip
\centerline{{\it For all $\epsilon>0$ there exists $C_{p,\epsilon}$ 
such that} ${\rm log}(\order( \Cl_{k_d} / \Cl_{k_d}^p)) \leq {\rm log}(C_{p,\epsilon})
+\epsilon  \cdot {\rm log}(p)$,}

\smallskip
which would give $\Cl_{k_d}=1$ for ${\rm log}(p) > 
\frac{{\rm log}(C_{p,\epsilon})}{1 - \epsilon}$ and any $\epsilon <1$.
But this does not apply for any $p$ with ``small'' $d$ and the 
constant $C_{p,\epsilon}$ is not effective.

\subsubsection{Heuristics about Gauss sums.}
The standard probabilities \eqref{stupid} assume that when 
$\ell \in {\mathscr L}_p$ varies, the sets ${\mathscr E}_\ell(p)$ are 
{\it random and independent}, giving probabilities close to~$0$,
which is not the case when $p$ is irregular at some 
$\chi_0^* = \omega^{p-n_0}$ with $\Cl_{\chi_0^*} \simeq \Z/ p^e \Z$, $e \geq 1$,
and when ${\rm g}_c(\ell)_{\chi_0^*}$ is a global $p$th power because 
${\mathfrak L}_{\chi_0^*}^{p^{e-1}}$ is $p$-principal.

\smallskip
Fix $\ell \in {\mathscr L}_p$ such that ${\mathfrak L}_{\chi_0^*}$ generates
$\Cl_{\chi_0^*} \simeq \Z/p^e\Z$ (thus ${\rm g}_c(\ell)_{\chi_0^*}$ is not 
a global $p$th power); put (Proposition \ref{varpi})
${\rm g}_c(\ell)_{\chi_0^*} = 1+ \beta_0(\ell) \cdot \varpi^{p-n_0}, \ 
\beta_0(\ell) \in \Z_p[\varpi]$,
where $\beta_0(\ell)$ is invertible modulo $\varpi $ if and only if 
${\rm g}_c(\ell)_{\chi_0^*}$ is not $p$-primary. 

\smallskip
Whatever $\ell' \in {\mathscr L}_p$ and ${\mathfrak L}' \mid \ell'$, 
one has, from \S\,\ref{ellell'}\,(ii),
${\rm g}_c(\ell')_{\chi_0^*} \equiv {\rm g}_c(\ell)_{\chi_0^*}^r 
\pmod p$, with $r \in [0, p^e-1]$ ($r=0$ if ${\mathfrak L}'_{\chi_0^*}$ 
is $p$-principal, thus ${\rm g}_c(\ell')_{\chi_0^*} \in K^{\times p}$), 
giving:
\begin{equation}\label{*}
{\rm g}_c(\ell')_{\chi_0^*} =: 1+ \beta_0(\ell')
\cdot \varpi^{p-n_0}, \ \, \beta_0(\ell') \equiv  r \cdot \beta_0(\ell) \!\! 
\pmod{\varpi}.
\end{equation}

Contrary to the classical idea that $\beta_0(\ell) \pmod \varpi $ 
follow standard probabilities $\frac{O(1)}{p}$
(even under the condition ${\rm g}_c(\ell)_{\chi_0^*} \notin K^{\times p}$),
we propose the following heuristic:

\smallskip
\begin{pushright}
{\it For each $\chi \in {\mathscr X}_+$, the mod $p$ values, at 
$\chi^* = \omega\,\chi^{-1}$, of the Gauss sums (more precisely 
of the $\psi^{-c}(c) \cdot {\rm g}_c(\ell) = J_1 \cdots J_{c-1}$), 
are uniformly distributed (or at least with explicit non-trivial 
densities), when $\ell \in {\mathscr L}_p$ varies.}
\end{pushright}

\smallskip
Because of the density theorems on the ideal classes when $\ell$
varies in ${\mathscr L}_p$ and $\chi$ in ${\mathscr X}_+$, we must 
examine two cases concerning the $\chi$-components of $\Cl$ when 
there exists $\chi_0^{} = \omega^{n_0} \in {\mathscr X}_+$ such that 
$\Cl_{\chi_0^*} \simeq \Z/p^e\Z$, $e \geq 1$:

\smallskip
\quad (a) $\chi \ne \chi_0^{}$ and $\Cl_{\chi*} = 1$.
The numerical experiments show that when $\ell \in {\mathscr L}_p$ varies, 
${\rm g}_c(\ell)_{\chi*} = 1+ \beta(\ell) \cdot \varpi^{p-n} $, 
with random $\beta(\ell) \pmod \varpi$ (probabilities $\frac{O(1)}{p}$).

\smallskip
\quad (b) $\chi = \chi_0^{}$ (and $\Cl_{\chi_0^*}\ne 1$).
If ${\rm g}_c(\ell)_{\chi_0^*}$ is $p$-primary for 
some given generator ${\mathfrak L}_{\chi_0^*}$, 
then from \eqref{*} all the ${\rm g}_c(\ell')_{\chi_0^*}$ 
are $p$-primary, whatever the class
of ${\mathfrak L}'_{\chi_0^*}$ ($p^e$ possibilities) because
$\beta_0(\ell') \equiv 0 \pmod \varpi$. So,
$n_0$ is always in ${\mathscr E}_\ell(p)$ and 
${\mathscr E}_0(p) \cap {\mathscr E}_\ell(p) \ne \ev$
for all $\ell \in {\mathscr L}_p$, which corresponds to
$\Cl_{\chi_0^{}} \ne 1$ and the 
non-cyclicity of $\Cl_{\chi_0^*}^{(p)}$ (Theorem \ref{cyclicity}).

\smallskip
Thus, to have analogous densities of $p$-primarity 
on ${\mathscr L}_p$ (as for the $p$-principal case (a)), 
$\beta_0(\ell) \equiv 0 \pmod \varpi$ ({\it under the condition 
${\rm g}_c(\ell)_{\chi_0^*} \notin K^{\times p}$}) must occur at least
$p$ times less, giving a probability in $\frac{O(1)}{p^2}$ 
instead of $\frac{O(1)}{p}$; it is even possible 
that such a circumstance be of probability $0$
depending on more accurate properties of Gauss or 
Jacobi sums; for this, the computation of 
$\beta(\ell)$ should be very interesting (see \cite{T2} where,
for any $\ell \equiv 1 \pmod p$, the coefficients $d_{i,k}$ of 
$J_i := \sum_{k=0}^{p-1} d_{i,k}\,\zeta_p^k$,
with $\sum_{k=0}^{p-1} d_{i,k}=1$, are studied and 
the starting point of future investigations, in relation with 
the other heuristics).

\subsubsection{Use of $p$th power residue symbols and cyclotomic units.}
\label{resymbol}
We refer to \cite[\S\,8.3]{Wa} for the classical $p$-adic interpretation of the
numbers $\order \Cl_{\chi}$, for $\chi \in  {\mathscr X}_+$,
as indices of the form $(E_\chi : F_\chi)$, where $F$ is the 
group of cyclotomic units. 

\smallskip
We need the following $p$th power criterion (from \cite[II.6.3.8]{Gr1}):

\begin{lemma} \label{ppower}
Let $\alpha \in K^\times$ be a pseudo-unit (namely, $\alpha$
is prime to $p$ and $(\alpha)={\mathfrak a}^p$).
Let any set ${\mathscr S}$ of places ${\mathfrak q}$ 
of $K$ such that $\langle \cl({\mathscr S}) \rangle_\Z = \Cl$
(or such that $\langle \cl({\mathscr S}) \rangle_{\Z[G]} = \Cl$ if
$K(\sqrt[p]{\alpha})/\Q$ is Galois).

Then $\alpha\in K^{\times p}$ if and only if $\alpha$ is 
$p$-primary and locally a $p$th power at ${\mathscr S}$ 
(i.e., $\alpha \in K_{\mathfrak q}^{\times p}$ 
for all ${\mathfrak q} \in {\mathscr S}$
where $K_{\mathfrak q}$ is the ${\mathfrak q}$-completion of $K$). 
\end{lemma}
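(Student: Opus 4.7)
\medskip

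\textbf{Plan of proof.} The implication $\alpha \in K^{\times p} \Rightarrow \alpha$ is $p$-primary and locally a $p$th power at every place is immediate: the Kummer extension $K(\sqrt[p]{\alpha})/K$ is then trivial, hence unramified at $\mathfrak{p}$, and $\alpha$ is a global (thus local) $p$th power at every $\mathfrak{q}$. So I focus on the converse.

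\smallskip

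The plan is to show that, under the assumptions, the Kummer extension $L:=K(\sqrt[p]{\alpha})/K$ is unramified everywhere and totally split at every $\mathfrak{q}\in \mathscr{S}$, and then deduce $L=K$ from the generation hypothesis. First, I would check that $L/K$ is unramified outside~$p$: for any prime $\mathfrak{q}\nmid p$ of $K$, ramification in the Kummer extension $K(\sqrt[p]{\alpha})/K$ (where $\mu_p^{}\subset K$) is controlled by $v_{\mathfrak{q}}(\alpha)\bmod p$; since $(\alpha)={\mathfrak a}^p$, one has $v_{\mathfrak{q}}(\alpha)\in p\,\Z$, so $\mathfrak{q}$ is unramified in $L/K$. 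At the prime $\mathfrak{p}$ above $p$, the $p$-primarity of $\alpha$ (Definition~\ref{psu}(ii)) gives unramification by definition. Therefore $L\subseteq H$, the $p$-Hilbert class field of~$K$.

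\smallskip

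Next, I would translate the local $p$th power condition at $\mathscr{S}$ into a splitting statement: for $\mathfrak{q}\in\mathscr{S}$ (necessarily unramified in $L/K$ by the previous step), the condition $\alpha\in K_{\mathfrak q}^{\times p}$ is equivalent to the complete splitting of $\mathfrak{q}$ in the Kummer extension $L/K$, i.e., the Frobenius $\bigl(\frac{L/K}{\mathfrak{q}}\bigr)$ is trivial. Via the Artin reciprocity isomorphism $\mathrm{Gal}(H/K)\simeq \Cl$, the Frobenius of $\mathfrak{q}\in \mathscr{S}$ in $H/K$ corresponds to $\cl(\mathfrak{q})$, and its image in $\mathrm{Gal}(L/K)$ is trivial for every $\mathfrak{q}\in\mathscr{S}$. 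Writing $\mathrm{Gal}(L/K)=\Cl/N$, where $N\subseteq \Cl$ is the subgroup corresponding to $\mathrm{Gal}(H/L)$, the hypothesis $\langle \cl(\mathscr{S})\rangle_{\Z}=\Cl$ then forces $N=\Cl$, hence $L=K$ and $\alpha\in K^{\times p}$.

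\smallskip

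For the variant where $K(\sqrt[p]{\alpha})/\Q$ is Galois, the subgroup $N=\mathrm{Gal}(H/L)$ is stable under the natural $G$-action on $\mathrm{Gal}(H/K)\simeq \Cl$, because $L/\Q$ being Galois means $L$ is $G$-stable inside $H$. Consequently $N$ is a $\Z[G]$-submodule of $\Cl$, and the weaker hypothesis $\langle \cl(\mathscr{S})\rangle_{\Z[G]}=\Cl$ still suffices to conclude $N=\Cl$. The main subtlety I anticipate is precisely this verification that one can relax $\Z$-generation to $\Z[G]$-generation in the Galois case; everything else reduces to the standard dictionary (Kummer theory, conductor-discriminant, Artin reciprocity) and the local criterion for complete splitting in a Kummer extension, which I would quote rather than reprove.
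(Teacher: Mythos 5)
Your proof is correct and follows essentially the same route as the paper: unramifiedness (outside $p$ from $(\alpha)={\mathfrak a}^p$, at ${\mathfrak p}$ from $p$-primarity) places $K(\sqrt[p]{\alpha})$ inside $H$, the local $p$th power conditions translate into trivial Frobenii, and in the Galois case the splitting of all conjugates makes ${\rm Gal}(K(\sqrt[p]{\alpha})/K)$ a quotient of $\Cl/\langle \cl({\mathscr S})\rangle_{\Z[G]}$, which is trivial. Your write-up just makes explicit the steps (valuation criterion, Artin reciprocity, $G$-stability of ${\rm Gal}(H/L)$) that the paper's terse proof leaves implicit.
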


\begin{proof} Consider the non-trivial direction, in the Galois case,
assuming that $\alpha$ is $p$-primary and such that 
$\alpha \in K_{\mathfrak q}^{\times p}$ for all ${\mathfrak q} \in {\mathscr S}$.
So $K(\sqrt[p]{\alpha})/K$ is unramified and ${\mathscr S}$-split; 
thus, due to the Galois condition, all the conjugates of 
${\mathfrak q} \in {\mathscr S}$ split and the Galois group of 
$K(\sqrt[p]{\alpha})/K$ corresponds, by class field theory, to a quotient 
of $\Cl/\langle \cl({\mathscr S}) \rangle_{\Z[G]}$, trivial by assumption. 
\end{proof}

\begin{theorem} Let $\chi_0^{} = \omega^{n_0} \in {\mathscr X}_+$
with $n_0 \in {\mathscr E}_0(p)$ and $\Cl_{\chi_0^*} 
\simeq \Z/p^e\Z$, $e\geq 1$ (i.e., $b_c(\chi_0^*) \sim p^e$). Let
$\eta := \zeta_p^{\frac{1-c}{2}} \frac{1-\zeta_p^c}{1-\zeta_p}$
be a generating real cyclotomic unit, where $c$ is a primitive root 
modulo $p$ (cf. \cite[Propo\-sition 8.11]{Wa}).

\smallskip
\quad (i) There exists an infinite subset 
${\mathscr L}_p(\chi_0^{}) \subseteq  {\mathscr L}_p$ of primes
$\ell$ such that the $G$-module generated by the $p$-class of 
${\mathfrak L} \mid \ell$ is $\Cl_{\chi_0^{}} \oplus \Cl_{\chi_0^*}$. 

\smallskip
\quad (ii)  $\Cl_{\chi_0^{}} \ne 1$ if and only if 
${\rm g}_c(\ell)_{\chi_0^*}$  is locally a $p$th power at ${\mathfrak p}$
but not at ${\mathfrak L}$ ($\ell \in {\mathscr L}_p(\chi_0^{})$).

\smallskip
\quad (iii) $\Cl_{\chi_0^{}} \ne 1$ if and only if 
$\eta_{\chi_0^{}}$ is locally a $p$th power at ${\mathfrak p}$
and at ${\mathfrak L}$ ($\ell \in {\mathscr L}_p(\chi_0^{})$).
\end{theorem}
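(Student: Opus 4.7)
The plan is to prove (i) by Chebotarev, then read off (ii) from the Kummer analysis of $K(\sqrt[p]{{\rm g}_c(\ell)_{\chi_0^*}})/K$, and reduce (iii) to the analogous Kummer analysis of $K(\sqrt[p]{\eta_{\chi_0^{}}})/K$ combined with the Main Theorem on abelian fields relating $\order \Cl_{\chi_0^{}}$ to the index $(E_{\chi_0^{}} : F_{\chi_0^{}})$.

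\smallskip
For (i), Hypothesis~\ref{hypo} together with Corollary~\ref{reflection} makes both $\Cl_{\chi_0^{}}$ and $\Cl_{\chi_0^*}$ cyclic, so their direct sum is a cyclic $G$-submodule of $\Cl$; the elements of ${\rm Gal}(H/K)$ whose projections on the $\chi_0^{}$- and $\chi_0^*$-components both land on generators form a subset of positive density. Chebotarev's density theorem in $H/\Q$ then produces infinitely many primes $\ell$ totally split in $K$ with a prime $\ov{\mathfrak L}$ of $H$ above $\ell$ whose Frobenius is of this type; the underlying $\mathfrak L$ in $K$ satisfies $\langle \cl(\mathfrak L) \rangle_{\Z[G]} = \Cl_{\chi_0^{}} \oplus \Cl_{\chi_0^*}$, defining $\mathscr{L}_p(\chi_0^{})$.

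\smallskip
For (ii), set $\alpha := {\rm g}_c(\ell)_{\chi_0^*}$. From \eqref{eqfond}, $b_c(\chi_0^*) \sim p^e$, and the fact that $\mathfrak L_{\chi_0^*}$ has order $p^e$ in $\Cl_{\chi_0^*}$, the ideal $(\alpha)$ is a $p$-th power of an ideal supported above $\ell$, so $\alpha$ is a pseudo-unit and $K(\sqrt[p]{\alpha})/K$ is a cyclic Kummer extension of character $\chi_0^{}$ which is automatically unramified outside $\mathfrak p$. The condition ``$\alpha$ is locally a $p$-th power at $\mathfrak p$'' is equivalent to $p$-primarity (Proposition~\ref{varpi}), which places $K(\sqrt[p]{\alpha})$ inside the $\chi_0^{}$-component $H_{\chi_0^{}}$ of the $p$-Hilbert class field. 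If $\Cl_{\chi_0^{}} \ne 1$, the reasoning of Lemma~\ref{lem} forces $\alpha$ to be $p$-primary and $K(\sqrt[p]{\alpha})/K$ to be the unique cyclic degree $p$ subextension of $H_{\chi_0^{}}$, where $\mathfrak L$ fails to split because its class generates $\Cl_{\chi_0^{}}$ by~(i); conversely, if $\alpha$ is $p$-primary but not a local $p$-th power at $\mathfrak L$, then $K(\sqrt[p]{\alpha})/K$ is a nontrivial unramified cyclic degree $p$ extension of character $\chi_0^{}$, forcing $\Cl_{\chi_0^{}} \ne 1$.

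\smallskip
Part~(iii) will be the main obstacle because it requires the index formula $(E_{\chi_0^{}} : F_{\chi_0^{}}) = \order \Cl_{\chi_0^{}}$ from the Main Theorem on abelian fields (cf.\ \cite[\S\,8.3]{Wa}), which under Hypothesis~\ref{hypo} yields the chain of equivalences $\Cl_{\chi_0^{}} \ne 1 \Leftrightarrow \eta_{\chi_0^{}} \in E_{\chi_0^{}}^p \Leftrightarrow \eta_{\chi_0^{}} \in K^{\times p}$ (the last step uses that $\eta_{\chi_0^{}}$ is a unit, so any global $p$-th root lies in $E$ and hence in $E_{\chi_0^{}}$). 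Granting this, the forward implication is immediate: $\Cl_{\chi_0^{}} \ne 1$ makes $\eta_{\chi_0^{}}$ a global $p$-th power, hence a local $p$-th power at every place. For the converse, assuming $\eta_{\chi_0^{}}$ is locally a $p$-th power at $\mathfrak p$ and at $\mathfrak L$, I would examine $K(\sqrt[p]{\eta_{\chi_0^{}}})/K$: since $\eta_{\chi_0^{}}$ is a $p$-primary unit, this extension is unramified everywhere, and if nontrivial it coincides with the unique degree $p$ subextension of $H_{\chi_0^*}$ (character $\chi_0^*$); splitting at $\mathfrak L$ would then force the class of $\mathfrak L_{\chi_0^*}$ into $\Cl_{\chi_0^*}^p$, contradicting its generating role from~(i). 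Hence $\eta_{\chi_0^{}} \in K^{\times p}$, and the index formula yields $\Cl_{\chi_0^{}} \ne 1$.
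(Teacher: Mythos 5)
Your proposal is correct and follows essentially the same route as the paper: Chebotarev in $H/\Q$ for (i), the splitting/Frobenius analysis of $K(\sqrt[p]{{\rm g}_c(\ell)_{\chi_0^*}})$ inside $H_{\chi_0^{}}$ for (ii) (which is exactly the content of the paper's residue-symbol argument via Lemma \ref{ppower}), and for (iii) the index formula $\order \Cl_{\chi_0^{}} = (E_{\chi_0^{}} : \langle \eta_{\chi_0^{}}\rangle)$ combined with the same splitting argument at ${\mathfrak L}$ in $H_{\chi_0^*}$. The only nit is in (i): requiring the Frobenius components at $\chi_0^{}$ and $\chi_0^*$ to be generators gives only containment $\langle \cl({\mathfrak L})\rangle_{\Z[G]} \supseteq \Cl_{\chi_0^{}} \oplus \Cl_{\chi_0^*}$, so to get the stated equality you should also prescribe the Frobenius to be trivial on the remaining isotypic components, which Chebotarev allows at once.
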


\begin{proof} (i) In the $\Z_p[G]$-monogenous case,
the ideals ${\mathfrak L}$ are of the form 
$(z) \cdot {\mathfrak A} \cdot {\mathfrak A}^*$, $z \in K^\times$, 
where $\cl({\mathfrak A})$ generates $\Cl_{\chi_0^{}}$ and 
$\cl({\mathfrak A}^*)$ generates $\Cl_{\chi_0^*}$.\,\footnote{If, for instance,
$\Cl_{\chi_0^{}} \simeq \Cl_{\chi_0^*} \simeq  \Z/p\Z$, 
these prime ideals ${\mathfrak L}$ have density $\Big (1-\frac{1}{p} \Big)^2$; 
otherwise, if $\Cl_{\chi_0^{}}=1$ and $\Cl_{\chi_0^*} \simeq  \Z/p\Z$,
the density is $1-\frac{1}{p}$.}

\smallskip
(ii) $\&$ (iii) Define the $p$th power residue symbol 
$\ds \bigg(\frac{\alpha}{{\mathfrak L}} \bigg) := 
\alpha^{\frac{\ell-1}{p}} \!\! \pmod {\mathfrak L}$ for 
${\mathfrak L} \mid \ell \in {\mathscr L}_p$ when $\alpha \in K^\times$
is prime to ${\mathfrak L}$. By abuse of notation, we shall write 
$\ds \bigg(\frac{\alpha}{p} \bigg) = 1$ if $\alpha$ is $p$-primary and
$\ds \bigg(\frac{\alpha}{{\mathfrak L}} \bigg) = 1$ if 
$\alpha \in K_{\mathfrak L}^{\times p}$ is not prime to ${\mathfrak L}$. 
Take $\ell \in {\mathscr L}_p(\chi_0^{})$:

\medskip
\quad (a) Consider $\alpha={\rm g}_c(\ell)_{\chi_0^*}$, where 
$\big ({\rm g}_c(\ell)_{\chi_0^*} \big)
= {\mathfrak L}_{\chi_0^*}^{b_c(\chi_0^*)}$. This gives rise 
to a counterexample to Vandiver's conjecture at $\chi_0^{}$ if and only if 
$\alpha$ is $p$-primary since $\cl({\mathfrak L}_{\chi_0^*})$
is a generator of $\Cl_{\chi_0^*}$; 
it follows that $\ds \bigg(\frac{\alpha}{{\mathfrak L}} \bigg) \ne 1$, 
otherwise, from Lemma \ref{ppower} applied in $H_{\chi_0^{}}$, 
$\alpha={\rm g}_c(\ell)_{\chi_0^*}$ 
should be a global $p$th power (contradiction).

\medskip
\quad (b) Consider $\alpha=\eta_{\chi_0^{}}$. Thus
$b_c(\chi_0^*) \equiv 0 \pmod p$ is equivalent to the $p$-primarity
of $\eta_{\chi_0^{}}$; so a counterexample to Vandiver's conjecture
at $\chi_0^{}$, equivalent to $\eta_{\chi_0^{}} \in E_{\chi_0^{}}^p$, is
equivalent to $\ds \bigg(\frac{\eta_{\chi_0^{}}}{{\mathfrak L}} \bigg) = 1$ since
$\ds \bigg(\frac{\eta_{\chi_0^{}}}{p} \bigg) = 1$.
Whence, with a prime ${\mathfrak L} \mid \ell \in {\mathscr L}_p(\chi_0^{})$:

\smallskip
$\ds \hspace{0.6cm} \Cl_{\chi_0^{}} \ne 1 \Leftrightarrow   
\bigg(\frac{{\rm g}_c(\ell)_{\chi_0^*}}{{\mathfrak L}} \bigg) \ne 1
  \ \&  \  \bigg(\frac{{\rm g}_c(\ell)_{\chi_0^*}}{p} \bigg) = 1
 \Leftrightarrow  \bigg(\frac{\eta_{\chi_0^{}}}{{\mathfrak L}} \bigg) = 
 \bigg(\frac{\eta_{\chi_0^{}}}{p} \bigg) = 1$.
\end{proof}

Let $\chi \in {\mathscr X}_+$
and $\ell \in {\mathscr L}_p(\chi)$ fixed.
If ${\rm Prob} \big(\big(\frac{{\rm g}_c(\ell)_{\chi^*}}
{{\mathfrak L}} \big) \ne 1 \big)$ is close to $1$, 
this suggests a probability around $\frac{O(1)}{p^2}$
for the $p$-primarity of ${\rm g}_c(\ell)_{\chi^*}$
if the two above symbols of $\eta_{\chi}$ are independent
with probabilities $\frac{O(1)}{p}$ for a single $\ell$.

So it is necessary to compute the symbol 
$\bigg(\frac{{\rm g}_c(\ell)_{\chi^*_0}}{{\mathfrak L}} \bigg)$
since ${\rm g}_c(\ell)_{\chi^*_0}$ and ${\mathfrak L}$ are
non-independent data.
For $\chi_0^{}=\omega^{n_0}$, $n_0 \in {\mathscr E}_0(p)$,
the primes $\ell$ of the theorem are not effective, 
but experiments with random $\ell$ seem sufficient for statistics.
Then a first condition for
$\bigg(\frac{{\rm g}_c(\ell)_{\chi_0^*}}{{\mathfrak L}} \bigg)=1$
is that ${\rm g}_c(\ell)_{\chi_0^*}$ be the $p$th power
of an $\ell$-ideal, which is fulfilled since $b_c(\chi_0^*) \equiv 0 \pmod p$.

\smallskip
Then, using the general program computing ${\rm g}_c(\ell)_{\chi_0^*}$
in ${\sf Sn} \in \Z[\zeta_p]$ (in other words {\it not reduced modulo $p$}), 
we divide this integer by the maximal power $\ell^{\, v}$, so that there exists a 
prime ideal ${\mathfrak L} \mid \ell$ which does not divide this 
new integer (still denoted ${\sf Sn}$ and $p$th power of an $\ell$-ideal);
the computation reduces to $R$, prime to ${\mathfrak L}$ and most likely
random, whose symbol $\Big(\frac{R}{{\mathfrak L}}\Big) 
= R^{\frac{\ell-1}{p}} \pmod {{\mathfrak L}}$, computed in ${\sf u}$, 
is immediate and gives the statistics:

\footnotesize
\smallskip
\begin{verbatim}
{p=37;n=32;print("p=",p," n=",n);c=lift(znprimroot(p));P=polcyclo(p);X=Mod(x,P);
for(i=1,100,el=1+2*i*p;if(isprime(el)!=1,next);g=znprimroot(el);M=(el-1)/p;J=1;
for(i=1,c-1,Ji=0;for(k=1,el-2,kk=znlog(1-g^k,g);e=lift(Mod(kk+i*k,p));
Ji=Ji-X^e);J=J*Ji);LJ=List;Jj=1;for(j=1,p-1,Jj=lift(Jj*J);listinsert(LJ,Jj,j));
Sn=1;for(a=1,p-1,an=lift(Mod(a,p)^(n-1));Jan=component(LJ,an);sJan=Mod(0,P);
for(j=0,p-2,aj=lift(Mod(a*j,p));sJan=sJan+X^(aj)*component(Jan,1+j));Sn=Sn*sJan);
Sn=lift(Sn);s=valuation(Sn-1,p);v=valuation(Sn,el);Sn=Sn/el^v;ro=g^M;
for(b=1,p-1,r=lift(ro^b);R=0;for(k=0,p-2,R=R+component(Sn,k+1)*r^k);
if(valuation(R,el)==0,y=R;break));u=lift(Mod(y,el)^M);
print("p=",p," el=",el," v=",v," u=",u);if(s!=0,print("Sn local pth power at P"));
if(Mod(v,p)==0 & u==1,print("Sn local pth power at L"));
if(Mod(v,p)!=0 || u!=1,print("Sn NON local pth power at L"));
if(Mod(v,p)==0 & u==1 & s!=0,print("Sn GLOBAL pth power")))}
p=37  n=32 
el=149     v=259   u=102    Sn NON local pth power at L
el=223     v=259   u=132    Sn NON local pth power at L
el=6883    v=259   u=6850   Sn NON local pth power at L
el=7253    v=259   u=4947   Sn NON local pth power at L
el=32783   v=259   u=1      Sn local pth power at P
el=32783   v=259   u=1      Sn local pth power at L
el=32783   v=259   u=1      Sn GLOBAL pth power
\end{verbatim}

\normalsize
We found $u= \Big(\frac{{\rm g}_c(\ell)_{\chi_0^*}}
{{\mathfrak L}} \Big) =1$ for the following $\ell$
(including the underlined numbers cor\-responding to primes
 $\ell \notin {\mathscr L}_p(\chi_0^{})$ such that 
 ${\rm g}_c(\ell)_{\chi_0^*} \in K^{\times p}$, i.e., ${\mathcal L}$ 
 $p$-principal):

\smallskip
\footnotesize
$\ell \  \in \ $ $\{22571$;$\ul {32783}$;$46103$;$53503$;$57943$;
$\ul {64381}$;$\ul {67489}$;$\ul {68821}$;$79847$;$83177$;$96497$;
$98939$;$104933$;$\ul {108929}$;

$\ \ \ \  117883$;$\ul {132313}$;$146521$;$\ul {146891}$;$151553$;$151849$;$158657$;
$158731$;$\ul {167759}$;
$\ul {172717}$;$197359 $;$\ul {198839}$,$\ul {207497}$$\}$ 

\smallskip
\normalsize
confirming existence and rarity of primes $\ell$ in the interval
$[149;207497]$ such that $u=1$ by accident 
(${\rm g}_c(\ell)_{\chi_0^*} \notin K^{\times p}$, i.e.,
${\mathcal L}$ non-$p$-principal). 

\smallskip
For $n=22 \notin {\mathscr E}_0(37)$, we found $u=1$ 
for the few examples (up to $2 \cdot 10^5$):

\smallskip
\footnotesize
\centerline{$\ell \in \{$$2221$; $2887$; $3923$; $49211$; $51283$; 
$69709$; $147779$; $164503$; $170497$; $179969$;
$192697$; $197803$$\}$}

\smallskip
\normalsize
but ${\rm g}_c(\ell)_{\chi^*}$ 
is not the $p$th power of an $\ell$-ideal, whence it is never in 
$K_{\mathfrak L}^{\times p}$. One finds an exponent of 
$p$-primarity $22$ for $\ell=3331$, then $14$, $16$ for 
$\ell = 51283$, $10$ for $\ell= 147779 $, and $28$ for 
$\ell= 164503$. In the exceptional case $\ell = 3331$, 
${\rm g}_c(\ell)_{\chi^*}$ is $p$-primary. 

This confirms the
expected properties of the symbol 
$\Big(\frac{{\rm g}_c(\ell)_{\chi^*_0}}{{\mathfrak L}} \Big)$.
A similar program computing the two symbols of $\eta_{\chi_0^{}}$ 
gives all expected results.

\subsubsection{Classical heuristics on class groups.}
A first important reason for a very rare occurrence of the non-cyclic 
case for $\Cl_{\chi*}^{(p)}$ may come from classical heuristics on 
$p$-class groups, assuming that they can be applied to ray class 
groups as $\Cl_{\chi*}^{(p)}$ when it is, for instance, of order $p^2$. 

\smallskip
Whatever the (numerous) references concerning this subject and 
independently of some improvements or questions on the relevance
of the formulas giving ${\rm Prob}({\rm rk}_p(C)=r)$
for such a $p$-group $C$, we observe that the quotient of the two
probabilities for $r=2$ and $r=1$ (for instance under the condition 
$\order C=p^2$) is at most $\frac{O(1)}{p}$ giving probabilities
in $\frac{O(1)}{p^2}$ to have $\Cl^{(p)}_{\chi*} \simeq (\Z/p\Z)^2$. 
Since ${\rm rk}_p(\Cl_{\chi})=1$ splits in the two cases of the reflection
theorem, ${\rm rk}_p(\Cl_{\chi} \oplus \Cl_{\chi*})=2$ or 
${\rm rk}_p(\Cl_{\chi*})=2$, the above applies.
As Nguyen Quang Do pointed out, this may come from the relation
${\rm H}^2 (\Cl_{\chi*}, (V/W)_{\chi*}) \simeq \F_p$, 
assuming the uniform randomness of the exact sequences
$1 \to (V/W)_{\chi*} \simeq \F_p \to \Cl^{(p)}_{\chi*} \to 
\Cl_{\chi*} \simeq \F_p \to 1$
(proof of Theorem \ref{cyclicity}), the non-cyclic case corresponding to the 
single cohomology class~$0$.

\subsubsection{Heuristics from $p$-ramification theory.}
Another investigation is about the groups ${\mathcal T}_\chi$,
$\chi \in {\mathscr X}_+$, and the formula $\order {\mathcal T}_\chi = 
\order \Cl_\chi \cdot \order {\mathcal R}_\chi$ with the equivalence \eqref{spiegel}
of reflection, $\Cl_{\chi*} \ne 1$ if and only if ${\mathcal T}_\chi  \ne 1$
(illustrated in \S\,\ref{regul}). 

\smallskip
Indeed, it is interesting to estimate in what proportions the relation 
$\order \Cl_\chi \cdot \order {\mathcal R}_\chi \ne 1$ is due to 
$\Cl_\chi$ or ${\mathcal R}_\chi$.
Of course, it is impossible to experiment with the cyclotomic fields $K$; 
so, since this problem must be considered as general and may result from 
some insights in $p$-ramification theory as done in a number 
of our articles (see \cite{Gr6} and its bibliography), we give some examples 
with quadratic and cyclic cubic fields.

\medskip
(a) {\bf Real quadratic fields and $p \geq 3$ fixed.}
For each of the ${\sf ND}$ real quadratic field of discriminant 
${\sf D} \in {\sf [bD, BD]}$, for which
${\mathcal T} \ne 1$ (counted in ${\sf Nt}$), we compute the 
proportions of cases for which this is due to $\order \Cl$ or 
$\order {\mathcal R}$; we privilegiate the case $\Cl \ne 1$ 
(counted in ${\sf Nh}$) even if the two groups $\Cl$ 
and ${\mathcal R}$ are both non-trivial; this may give a slightly 
larger proportion for $\frac{{\sf Nh}}{{\sf Nt}}$ but a much faster program:

\footnotesize
\smallskip
\begin{verbatim}
{p=3;bD=10^6;BD=10^6+5*10^4;ND=0;Nh=0;Nt=0;for(D=bD,BD,e=valuation(D,2);M=D/2^e;
if(core(M)!=M,next);if((e==1||e>3)||(e==0&Mod(M,4)!=1)||(e==2&Mod(M,4)==1),next);
m=D;if(e!=0,m=D/4);ND=ND+1;P=x^2-m;K=bnfinit(P,1);Kpn=bnrinit(K,p^2);
C5=component(Kpn,5);Hpn0=component(C5,1);Hpn=component(C5,2);
Hpn1=component(Hpn,1);vptor=valuation(Hpn0/Hpn1,p);
if(vptor>=1,Nt=Nt+1;C8=component(K,8);h=component(component(C8,1),1);
vph= valuation(h,p);if(vph>=1,Nh=Nh+1)));print("[",bD,", ",BD,"]");print
("p=",p," ND=",ND," Nt=",Nt," Nh=",Nh," Nh/Nt=",Nh/Nt+0.," 1/p=",1./p)}

[bD, BD]=[1000000, 1050000]
p=3   ND=15204   Nt=7308   Nh=2050   Nh/Nt=0.28051450  1/p=0.33333333
p=5   ND=15204   Nt=3522   Nh=634    Nh/Nt=0.18001135  1/p=0.20000000
p=7   ND=15204   Nt=2464   Nh=331    Nh/Nt=0.13433441  1/p=0.14285714
p=11  ND=15204   Nt=1497   Nh=97     Nh/Nt=0.06479625  1/p=0.09090909

[bD, BD]=[10000000, 10050000]
p=3   ND=15198   Nt=7516   Nh=2161   Nh/Nt=0.28751995  1/p=0.33333333
p=5   ND=15198   Nt=3597   Nh=720    Nh/Nt=0.20016680  1/p=0.20000000
p=7   ND=15198   Nt=2443   Nh=347    Nh/Nt=0.14203847  1/p=0.14285714
p=11  ND=15198   Nt=1512   Nh=122    Nh/Nt=0.08068783  1/p=0.09090909

[bD, BD]=[100000000, 100100000]
p=3   N=30410    Nt=15133  Nh=4456   Nh/Nt=0.29445582  1/p=0.33333333
\end{verbatim}

\normalsize
\smallskip
The proportion ${\sf Nh/Nt}$ becomes close to $\frac{1}{p}$ for intervals with large 
discriminants.

\medskip
(b) {\bf Cyclic cubic fields and $p\equiv 1 \pmod 3$ fixed.}
We obtain analogous results with the same rough calculation (e.g., we 
may have $\Cl_{\chi_1} \ne 1$ and ${\mathcal R}_{\chi_1} \ne 1$ or
${\mathcal R}_{\chi_2} \ne 1$), but this does not affect the statistics 
(${\sf f} \in {\sf [bf, Bf]}$ denotes the conductor):

\footnotesize
\smallskip
\begin{verbatim}
{p=7;bf=10^5;Bf=5*10^5;Nf=0.0;Nh=0;Nt=0;for(f=bf,Bf,e=valuation(f,3);
if(e!=0 & e!=2,next);F=f/3^e;if(Mod(F,3)!=1||core(F)!=F,next);F=factor(F);
D=component(F,1);d=component(matsize(F),1);for(j=1,d-1,l=component(D,j);
if(Mod(l,3)!=1,break));for(b=1,sqrt(4*f/27),if(e==2 & Mod(b,3)==0,next);
A=4*f-27*b^2;if(issquare(A,&a)==1,if(e==0,if(Mod(a,3)==1,a=-a);
P=x^3+x^2+(1-f)/3*x+(f*(a-3)+1)/27);if(e==2,if(Mod(a,9)==3,a=-a);
P=x^3-f/3*x-f*a/27);Nf=Nf+1;K=bnfinit(P,1);Kpn=bnrinit(K,p^2);
C5=component(Kpn,5);Hpn0=component(C5,1);Hpn=component(C5,2);
Hpn1=component(Hpn,1);vptor=valuation(Hpn0/Hpn1,p);
if(vptor>=1,Nt=Nt+1;C8=component(K,8);h=component(component(C8,1),1);
vph=valuation(h,p);if(vph>=1,Nh=Nh+1)))));print("[",bf,", ",Bf,"]");print
("p=",p," Nf=",Nf," Nt=",Nt," Nh=",Nh," Nh/Nt=",Nh/Nt," 1/p=",1./p)}

[bf, Bf]=[50000, 100000]
p=7   Nf=7928   Nt=2302   Nh=344    Nh/Nt=0.14943527   1/p=0.14285714
[bf, Bf]=[100000, 500000]
p=7   Nf=63427  Nt=18533  Nh=2690   Nh/Nt=0.14514649   1/p=0.14285714
[bf, Bf]=[100000, 500000]
p=13  Nf=63427  Nt=9979   Nh=754    Nh/Nt=0.07555867   1/p=0.07692307
[bf, Bf]=[100000, 500000]
p=19  Nf=63427  Nt=6850   Nh=389    Nh/Nt=0.05678832   1/p=0.05263157
[bf, Bf]=[100000, 500000]
p=31  Nf=63427  Nt=4316   Nh=139    Nh/Nt=0.03220574   1/p=0.03225806
\end{verbatim}

\normalsize
\smallskip
The fact that ${\mathcal R}_\chi$ is much often non-trivial than $\Cl_\chi$, 
in a computable proportion, is a positive argument for Vandiver's conjecture. 
We suggest that for totally real fields (like $K_+$), abelian $p$-ramification 
is essentially governed by the normalized $p$-adic regulator and that the 
$p$-class group is in some sense a ``secondary'' invariant.

\subsubsection{Folk heuristic.}
Consider the Gauss sum 
$\tau(\psi) = -\sm_{k=0}^{\ell-2} \zeta_p^k \cdot \xi_\ell^{g^k}$
(where $g$ is a primitive root modulo $\ell$, $\zeta_p := \psi(g)$,
see \eqref{defk}),
and put $k=a\,p+b$, $0 \leq a \leq \frac{\ell - 1}{p}-1$,  $0 \leq b \leq p-1$.
Whence:
\begin{equation}\label{combin}
\tau(\psi) = -\sm_{b=0}^{p-1} \zeta_p^b \cdot 
\big[{\rm Tr}_{\Q(\xi_\ell)/F_\ell} (\xi_\ell)\big]^{\sigma(b)}, 
\end{equation}
where $F_\ell$ is the cyclic subextension of degree $p$ of $\Q(\xi_\ell)$,
where $\sigma(b)$ is the automorphism acting
trivially on $\zeta_p$ and such that $\xi_\ell \mapsto \xi_\ell^{g^b}$,
giving an exact system of representatives for ${\rm Gal}(F_\ell/\Q)$
independent of the choice of $g$.

\smallskip
From Remark \ref{remaell}\,(ii), we know that $F_\ell$ is obtained
as the decomposition over $\Q$ of the extension $K(\sqrt[p]{\alpha})/K$, 
with $\alpha = \tau(\psi)^p \in \Z[\zeta_p]$, and it is immediate to
see that the $p$-class group of $F_\ell$ is trivial because of
Chevalley's formula on invariant classes giving here 
$\order \Cl_{F_\ell}^{{\rm Gal}(F_\ell/\Q)} =1$ since $\ell$ is the 
unique ramified prime in $F_\ell/\Q$. 

\smallskip
\quad (i) The first observation is that the $p$-class group of $F_\ell$ does 
not depend on that of $K$ as $\ell$ varies !
Indeed, this context is neither more nor less than class field theory over $\Q$
giving the existence of a unique cyclic extension $F_\ell$ of 
conductor $\ell \equiv 1 \pmod p$, for which one considers the
set of conjugates of the relative trace of $\xi_\ell$ which moreover
defines a normal basis of $F_\ell$; then the
unique link with the arithmetic of $K$ is the linear
combination \eqref{combin} involving the traces to built $\alpha$, 
but the character of $\langle \alpha \rangle_{\Z[G]} K^{\times p}/K^{\times p}$ 
is $\omega$ which gives, as we know, a ``poor'' information on the arithmetic of $K$. 

\smallskip
Thus, the relationship of $\alpha=\tau(\psi)^p$ (whence of $\tau(\psi)$) 
with class field theory over $K$ (namely, with 
$p$-classes and units of $K$) is tenuous, possibly empty;
which is quite the opposite for the twists ${\rm g}_c(\ell)$ because 
of the relations $\alpha^{c - s_c} = {\rm g}_c(\ell)^p$ and the fact that
the ${\rm g}_c(\ell)_{\chi^*}$ are radicals defining non-trivial (arithmetically)
cyclic extensions of degree $p$ of $K_+$ for any even character $\chi$.

\smallskip
\quad (ii) In another direction, suggested by the work of Lecouturier \cite{Le}
generalizing results of Calegari--Emerton and Iimura, consider the non-Galois 
extension $\wt F_\ell := \Q(\sqrt[p]{\wt \alpha})$, where $\wt \alpha := \ell$; 
of course, $K(\sqrt[p]{\wt \alpha})/K$ is 
a cyclic extension of degree $p$ (undecomposed over a strict subfield
of~$K$), ramified at the $p-1$ primes ${\mathfrak L} \mid \ell$ and at $p$
if and only if $\ell \not\equiv 1 \pmod {p^2}$. 
On the contrary, as shown by many results of \cite{Le},
the $p$-class group of $\wt F_\ell$ strongly depends on 
the arithmetic of $K$ while the radical $\wt \alpha$ does not.

\smallskip
This second observation comes from the fact that, for
$\wt M:=K(\sqrt[p]{\wt \alpha})$:
$$\order \Cl_{\wt M}^{{\rm Gal}(\wt M/K)} =
\order \Cl_K \cdot \frac{p^{p-2+\delta}}
{(E_K : E_K \cap {\rm N}_{\wt M/K} (\wt M^\times))} \leq
\order \Cl_K \cdot p^{\frac{p-1}{2}}, $$

where $\delta = 1$ or $0$
according as $p$ ramifies or not and where $\zeta_p$ is norm 
for $\delta=0$; but the non-abelian Galois 
structure yields various non-trivial $p$-class groups for 
$\wt F_\ell$ as $\ell$ varies, and genera theory implies 
${\rm rk}_p(\Cl_{\wt F_\ell}) \geq 1$ for all $\ell$ (for the
metabelian genera theory, see \cite{J}).
However, for $M = K(\sqrt[p]{\alpha}) = F_\ell \, K$:
$$\order \Cl_{M}^{{\rm Gal}(M/K)} = \order \Cl_K \cdot \frac{p^{p-2}}
{(E_K : E_K \cap {\rm N}_{M/K} (M^\times))}
\leq \order \Cl_K \cdot p^{\frac{p-1}{2}} ,$$ 

and we left to the reader the computation of the (non-trivial) order of the minus part;
but $M/K$ decomposes into $F_\ell/\Q$ and only the 
isotopic component for the unit character is concerned, which gives 
in fact a trivial part of the above Chevalley's formula (contrary to 
the metabelian case $\wt M/\Q$). 
So the ``folk heuristic'' should be:

\smallskip
\begin{pushright}
{\it Because of $F_\ell$ defined by the radical $\alpha = \tau(\psi)^p$,
the $p$-adic properties of the Gauss sums are independent 
of the arithmetic of $K$ as $\ell$ varies (despite 
the apparent complexity of the radical $\alpha = \tau(\psi)^p$), 
while the properties of $\wt F_\ell$ are strongly dependent 
(despite the obvious simplicity of the radical $\wt \alpha= \ell$).}
\end{pushright}

\smallskip
In other words we have probably some ``dualities'' about the arithmetic 
complexity of Kummer theory in the comparison ``radicals versus 
extensions''.

\subsection{Additive $p$-adic statistics}\label{add}
Of course, we are only concerned with the multiplicative $p$-adic
properties of the Gauss sums $\tau(\psi)$ and mainly of the twists
${\rm g}_c(\ell)$, and these are given by their
$\chi^*$-components for $\chi \in  {\mathscr X}_+$. 
Nevertheless, the additive properties
seem to follow more explicit rules, which is an interesting
information about the numerical repartition and the independence 
as $\ell$ varies, and this probably has an impact on the multiplicative 
properties regarding the results of \S\,\ref{classes}.
We shall examine the case of the twists ${\rm g}_c(\ell)$ (more
precisely of $\psi^{-c}(c)\,{\rm g}_c(\ell)$), then the case of the original 
Gauss sums $\tau(\psi)$ from the arithmetic of the fields $F_\ell$.

\subsubsection{$\Z$-rank of the family 
$\big(\psi^{-c}(c)\,{\rm g}_c(\ell) \big)_{\ell \in {\mathscr L}_p}$.} 
Put, for $p$ and $c$ fixed:
\begin{equation}\label{jel}
{\bf J}(\ell) := \psi^{-c}(c)\,{\rm g}_c(\ell) =
\psi^{-c}(c)\, \tau(\psi)^{c-\sigma_c} 
= J_1 \cdots J_{c-1} \ \hbox{(see \eqref{jacobi})}
\end{equation} 

written on the basis $\{1, \zeta_p, \ldots, \zeta_p^{p-2}\}$,
under the form
${\bf J}(\ell) = \sm_{k=0}^{p-2} a_k(\ell)\,\zeta_p^k$,
the integers $a_k(\ell)$ being considered modulo $p$. 
A first information, about the $p$-adic 
repartition of the ${\bf J}(\ell)$ as $\ell$ varies, is to compute 
the $\F_p$-rank of the $\F_p$-matrix $\big(a_k(\ell) \big)_{k,\ell}$. 
The following  program gives {\it systematically}:
$${\rm Rank}_{\F_p} \big[\big(a_k(\ell) \big)_{\ell, k} \big] = p-4, $$

for all the primes $p \geq 7$ tested 
(rank $1$ for $p=3$ and rank $2$ for $p=5$), {\it despite the 
fact that the lines are not canonical} (up to circular permutations 
of their elements since ${\bf J}(\ell)$ is defined up to conjugation). 
We have verified it up to $p \leq 331$, 
an interval which contains $16$ irregular primes.
The program gives $p$, the $\F_p$-rank 
of the matrix (in ${\sf rank}$) and the least 
$\ell_p$ (in ${\sf elp}$) for which the sub-matrix built from 
$\{\ell \in {\mathscr L}_p, \ \ell \leq \ell_p\}$ has rank $p-4$:

\footnotesize
\smallskip
\begin{verbatim}
{forprime(p=7,500,c=lift(znprimroot(p));P=polcyclo(p)+Mod(0,p);M=matrix(0,p-1);
r=0;for(i=1,10^8,el=1+2*i*p;if(isprime(el)!=1,next);g=znprimroot(el);J=1;
for(i=1,c-1,Ji=0;for(k=1,el-2,kk=znlog(1-g^k,g);e=lift(Mod(kk+i*k,p));
Ji=Ji-x^e);J=J*Ji);J=lift(Mod(J,P));V=vector(p-1,j,component(J,j));
A=concat(M,V);rr=matrank(A);if(rr==r,next);r=rr;M=A;
if(r==p-4,print("p=",p," r=",r," elp=",el);break)))}
p  rank  elp       p   rank  elp       p   rank   elp       p   rank   elp
7   3    113       11   7    397       13   9     599       17   13    1259   
(...)
71  67   42743     73   69   48473     79   75    50087     83   79    65239  
151 147  247943    157  153  273181    163  159   294053    167  163   305611
\end{verbatim}

\normalsize
\smallskip
We have ${\bf J}(\ell) \equiv 1 \pmod {\mathfrak p}$, in other words
$\sum_{k=0}^{p-2} a_k(\ell) \equiv 1 \pmod p$, and we can write
${\bf J}(\ell) = 1+\sum_{k=1}^{p-2} a_k(\ell)\,(\zeta_p^k-1)$ 
depending on $p-2$ parameters; then, due to the relations
${\bf J}(\ell)^{1+s_{-1}} \equiv 1 \pmod p$ and 
${\bf J}(\ell)^{e_\omega} \in K^{\times p}$ (because 
$\omega(c - s_c) \equiv 0 \pmod p$), this yields the
three relations of  ``derivation'' (for $p \geq 7$)
$\sm_{k=1}^{p-2} k^\delta \! \cdot a_k(\ell) \equiv 0 \pmod p$, 
$\delta \in \{1,2,4\}$, for any $\ell \in {\mathscr L}_p$.
Whence a $\F_p$-rank at most $p-4$, but we have no proof of 
the equality. 

\smallskip
The order of magnitude of $\ell_p$ seems to be $O(1)\,p^2\,{\rm log}(p^2)$, 
which is in agreement with a ``conductor'' $p^2$ for these Hecke 
Gr\"ossencharacters \cite[Theorem, p.\,489]{We}, but the program slows 
down very much, as $p$ increases, to be more accurate. 

\smallskip
Moreover, the number of consecutive primes $\ell$ needed to reach the 
rank $p-4$ is equal to $p-4$, except probably for finitely many cases,
which confirms the above order.
Give now the end of the above table with an estimation of the $O(1)$:

\footnotesize
\smallskip
\begin{verbatim}
p    elp   O(1)    p    elp    O(1)    p    elp   O(1)    p    elp   O(1)    
211 517373 1.0856  223 628861  1.1693  227 604729 1.0816  229 631583 1.1082
233 642149 1.0849  239 695491  1.1116  241 684923 1.0750  251 784627 1.1269
257 862493 1.1766  263 819509  1.0631  269 928051 1.1461  271 906767 1.1019
277 925181 1.0719  281 1055437 1.1853  283 979747 1.0834  293 988583 1.0136
307 1174583 1.0881 311 1214767 1.0941 313 1203799 1.0692 317 1276243 1.1026
\end{verbatim}

\normalsize
\smallskip
The $\F_p$-rank $r_p(\ell)$ of the $p-1$ conjugates of ${\bf J}(\ell) \pmod p$,
$\ell \in {\mathscr L}_p$, is close to $p-4$ (e.g., for $p=37$, 
$r_{37}(\ell) \in \{33, 32, 31, 30\}$ in similar proportions, and we only have the 
local minimum $(r_{37}(\ell), \ell)=(29,2591)$ for $\ell$ up to $37000$.

\subsubsection{Repartition of the conjugates of the traces 
${\rm Tr}_{\Q(\xi_\ell)/F_\ell} (\xi_\ell)$.}
Let  $Z_{F_\ell}$ be the ring of integers of $F_\ell$ and let
$Z_{F_\ell}/ p\,Z_{F_\ell}$ be the 
residue ring modulo~$p$. These residue rings are 
isomorphic to $\F_{p^p}$ or to $\F_p^p$, but there is no canonical 
map between them as $\ell \in {\mathscr L}_p$ varies. 

Thus, in the expression \eqref{combin} giving
$\tau(\psi) = -\sm_{b=0}^{p-1} \zeta_p^b \cdot 
\big[{\rm Tr}_{\Q(\xi_\ell)/F_\ell} (\xi_\ell)\big]^{\sigma(b)}$,
the images in $Z_{F_\ell}/ p\,Z_{F_\ell}$ of the conjugates of the
relative traces ${\rm Tr} (\xi_\ell) := {\rm Tr}_{\Q(\xi_\ell)/F_\ell} (\xi_\ell)$
may be easily analysed and compared, for $\ell \in {\mathscr L}_p$, 
by means of the image $R_\ell$ in $\F_p[x]$ of the polynomial
$Q_\ell=\prd_{\ov \sigma \in {\rm Gal}(F_\ell/\Q)} 
\big(x-{\rm Tr} (\xi_\ell)^{\ov \sigma} \big) \in \Z[x]$.

\begin{proposition} 
Let $\ell_1 , \ell_2 \in {\mathscr L}_p$ and let
$\tau(\psi_1)$, $\tau(\psi_2)$ be the corresponding 
Gauss sums normalized via
$\psi_1(g_1) = \psi_2(g_2)=\zeta_p$.
Let $F = F_{\ell_1}F_{\ell_2}$.\par

If $R_{\ell_1} \ne R_{\ell_2}$, then for all $\sigma \in {\rm Gal}(FK/\Q)$,
$\tau(\psi_2) \not \equiv \tau(\psi_1)^{\sigma}\! \pmod {{\mathfrak p}^pZ_{FK}}$.
\end{proposition}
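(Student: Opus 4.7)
The plan is to show that $\tau(\psi) \pmod{\mathfrak{p}^p Z_{FK}}$ determines, in a Galois-invariant way, the multiset of reductions modulo $p$ (in $Z_F/p$) of the $p$ conjugates of ${\rm Tr}_{\Q(\xi_\ell)/F_\ell}(\xi_\ell)$, and hence determines $R_\ell$. Any congruence $\tau(\psi_2) \equiv \tau(\psi_1)^\sigma \!\pmod{\mathfrak{p}^p}$ will then force $R_{\ell_1}=R_{\ell_2}$, contradicting the hypothesis.

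Set $\pi := \zeta_p - 1$ and $T_b := {\rm Tr}_{\Q(\xi_\ell)/F_\ell}(\xi_\ell^{g^b}) \in Z_F$, so that \eqref{combin} reads $\tau(\psi) = -\sum_{b=0}^{p-1} \zeta_p^b T_b$. The identity $\zeta_p^b = (1+\pi)^b = \sum_i \binom{b}{i}\pi^i$ then gives the exact expansion $\tau(\psi) = -\sum_{i=0}^{p-1} S_i \pi^i$ in $Z_{FK}$, where $S_i := \sum_{b \geq i} \binom{b}{i} T_b \in Z_F$. Because $F=F_{\ell_1}F_{\ell_2}$ and $K$ have coprime discriminants, they are linearly disjoint and ${\rm Gal}(FK/\Q) = {\rm Gal}(F/\Q) \times {\rm Gal}(K/\Q)$, so I may write $\sigma = \sigma_F \sigma_K$ with $\sigma_K(\zeta_p)=\zeta_p^a$ and $\sigma_F$ restricting to some permutation $\phi$ of the conjugates of ${\rm Tr}(\xi_{\ell_1})$. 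Reindexing the resulting sum by $b' = ab \bmod p$ yields
$$\tau(\psi_1)^\sigma = -\sum_{b'=0}^{p-1} \zeta_p^{b'} U_{b'} = -\sum_{i=0}^{p-1} S_i^{(\sigma)} \pi^i, \qquad S_i^{(\sigma)} := \sum_{b\geq i}\binom{b}{i}U_b,$$
where $U_{b'} := T^{(1)}_{\phi(a^{-1}b' \bmod p)}$ so that $(U_{b'})_{b'}$ is a permutation of $(T_b^{(1)})_b$.

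The heart of the proof is to translate $\tau(\psi_2) \equiv \tau(\psi_1)^\sigma \!\!\pmod{\mathfrak{p}^p Z_{FK}}$ into congruences on the $S_i$'s. Since $\mathfrak{p}$ is unramified in $FK/K$, one has $\mathfrak{p}^p Z_{FK} = p\pi Z_{FK}$. Expanding $\Phi_p(1+\pi)=0$ gives $\pi^{p-1} = -p\, g(\pi)$ with $g(\pi) \in \Z[\pi]$ and $g(0)=1$ (hence a unit in $Z_K$). Rewriting both sides in the canonical $Z_F$-basis $\{1,\pi,\ldots,\pi^{p-2}\}$ of $Z_{FK}$, the element $p\pi y$ (for $y = \sum y_i \pi^i$) has coefficient of $1$ equal to $-p^2 y_{p-2}$ and coefficient of $\pi^i$ ($i\geq 1$) divisible by $p$. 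Consequently the congruence is equivalent to $S_i^{(\sigma)} \equiv S_i^{(2)} \!\pmod{p\,Z_F}$ for $i=1,\ldots,p-2$ together with a mod-$p^2$ coupling at $i=0$ of the form $-(S_0^{(2)} - S_0^{(\sigma)}) + p(S_{p-1}^{(2)} - S_{p-1}^{(\sigma)}) \equiv 0 \pmod{p^2}$. The \emph{a priori} identity $S_0 = \sum_b T_b = {\rm Tr}_{\Q(\xi_\ell)/\Q}(\xi_\ell) = -1$ (independent of $\ell$) forces $S_0^{(\sigma)} = S_0^{(2)} = -1$, and the $i=0$ relation then delivers $S_{p-1}^{(\sigma)} \equiv S_{p-1}^{(2)} \!\pmod p$ as well. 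Thus $S_i^{(\sigma)} \equiv S_i^{(2)} \!\pmod{p\,Z_F}$ for \emph{all} $i \in \{0,\ldots,p-1\}$.

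To conclude, the matrix $M := \big(\binom{b}{i}\big)_{0 \leq i,b \leq p-1}$ is upper unitriangular and hence invertible over $\Z$. Inverting the relation $\vec S = M\vec T$ modulo $p$ gives $U_b \equiv T_b^{(2)} \!\pmod{p\,Z_F}$ for every $b$. Since $\{U_b\} = \{T_b^{(1)}\}$ as multisets, the multisets $\{T_b^{(1)} \bmod p\}$ and $\{T_b^{(2)} \bmod p\}$ in $Z_F/p$ coincide, whence $R_{\ell_1}(x) = \prod_b(x - T_b^{(1)}) \equiv \prod_b(x - T_b^{(2)}) = R_{\ell_2}(x)$ in $\F_p[x]$, contradicting the hypothesis. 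The main obstacle I anticipate is the careful bookkeeping of the basis reduction $\pi^{p-1} \mapsto -p\,g(\pi)$ and the mod-$p^2$ relation at $i=0$, which must be combined with the special value $S_0 = -1$ to pin down the full system; the subsequent inversion of the binomial matrix is purely formal.
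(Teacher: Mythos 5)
Your proof is correct, and it follows the same basic strategy as the paper — expand both Gauss sums over a $Z_F$-basis, read the congruence off coefficientwise, and use ${\rm Tr}_{\Q(\xi_\ell)/\Q}(\xi_\ell)=-1$ to recover the one coefficient the basis argument does not directly control — but the execution differs in the choice of basis. The paper subtracts the constant first: since $\sum_b T_b=-1$ for both $\ell_1,\ell_2$, each sum is rewritten as $1-\sum_{b=1}^{p-1}(\zeta_p^b-1)T_b$, so the difference is a $Z_F$-linear combination of $\{\zeta_p^b-1\}_{b=1}^{p-1}$, which is a $Z_F$-basis of ${\mathfrak p}Z_{FK}$; since ${\mathfrak p}^p Z_{FK}=p\,{\mathfrak p}Z_{FK}$, the assumed congruence immediately forces $U_b\equiv T_b^{(2)}\pmod{p\,Z_F}$ for $b\ne 0$, and the case $b=0$ follows again from the common value $-1$ of the full trace. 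Your route through the power basis $\{1,\pi,\dots,\pi^{p-2}\}$ reaches the same coefficientwise conclusion but at the cost of the reduction $\pi^{p-1}=-p\,g(\pi)$, the mod-$p^2$ coupling at the constant term (which you resolve with $S_0=-1$ — the same use of the full trace), and the inversion of the unitriangular Pascal matrix; all of these steps are carried out correctly (only the forward implication of your ``equivalent to'' is needed, and the overall sign in your $i=0$ relation is immaterial). Both arguments rest on the same implicit fact that $Z_{FK}=Z_F\otimes_\Z Z_K$ because $F$ and $K$ have coprime discriminants, so the $Z_F$-coordinates you and the paper compare are well defined; the paper's basis choice simply collapses your bookkeeping into a single step.
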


\begin{proof}
Suppose there exists $\sigma \in {\rm Gal}(FK/\Q)$ such that 
$\tau(\psi_2) \equiv \tau(\psi_1)^{\sigma} \pmod {{\mathfrak p}^pZ_{FK}}$;
recall that $\tau(\psi_1)^{\sigma} = \zeta_\sigma \,\tau(\psi_1^e)$,
$\zeta_\sigma \in \mu_p$, $e \in (\Z/p\Z)^\times$. Then:

$\tau(\psi_2) = -\sm_{b=0}^{p-1} \zeta_p^b \cdot 
{\rm Tr}(\xi_{\ell_2})^{\sigma_2(b)}\ $ and
$\ \tau(\psi_1)^{\sigma} = -\sm_{b=0}^{p-1} \zeta_p^b 
\cdot {\rm Tr}(\xi_{\ell_1})^{\pi(\sigma_1(b))}$,\par

where $\pi$ is a permutation of the $\sigma_1(b)$.
Using ${\rm Tr}_{\Q(\xi_{\ell_i})/\Q}(\xi_{\ell_i})=-1$, we get:\par
$\tau(\psi_2) = 1 - \sm_{b=1}^{p-1} (\zeta_p^b - 1) \cdot 
{\rm Tr} (\xi_{\ell_2})^{\sigma_2(b)}$, 
$\tau(\psi_1)^{\sigma} = 1 - \sm_{b=1}^{p-1} (\zeta_p^b - 1) \cdot 
{\rm Tr} (\xi_{\ell_1})^{\pi(\sigma_1(b))}$, 
whence:\par
$\tau(\psi_1)^{\sigma} \!- \tau(\psi_2)  =\! \sm_{b=1}^{p-1} 
(\zeta_p^b -1) \! \cdot \! \big ({\rm Tr} (\xi_{\ell_2})^{\sigma_2(b)} \! - 
{\rm Tr} (\xi_{\ell_1})^{\pi(\sigma_1(b))}\big ) 
\!\equiv 0\!\! \pmod{{\mathfrak p}^pZ_{FK}}$.

Since the $\zeta_p^b -1$, $b \in [1, p-1]$, define a $\Z$-basis of 
${\mathfrak p}\,Z_K$, then a $Z_F$-basis of ${\mathfrak p}\,Z_{FK}$,
this relation implies ${\rm Tr} (\xi_{\ell_2})^{\sigma(b)}
\equiv {\rm Tr} (\xi_{\ell_1})^{\pi'(\sigma(b))} \pmod p$
for all $b$, which yields $R_{\ell_1}=R_{\ell_2}$ in $\F_p[x]$ 
(contradiction).
\end{proof}

Since $\tau(\psi_2) \not \equiv \tau(\psi_1)^{\sigma} 
\pmod {{\mathfrak p}^p}$ for all $\sigma$ implies 
${\rm g}_c(\ell_2) \not \equiv {\rm g}_c(\ell_2)^\sigma 
\pmod {{\mathfrak p}^p}$ for all $\sigma$ 
(except for the $\omega$-components 
because $\omega(c - \sigma_c) \equiv 0 \pmod p$), 
we can say that the number of distinct polynomials 
$R_\ell$, $\ell \in {\mathscr L}_p$, gives a partial 
idea of the repartition modulo $p$ of the 
sets ${\mathscr E}_\ell(p)$ as $\ell$ varies.
 As $p$ increases, the number of distinct $R_\ell$ seems to be
$O(p^2 \cdot {\rm log}(p^2))$.

\smallskip
The following program, computing the monic polynomial 
${\sf R} = R_\ell \in \F_p[x]$ returns: ${\sf el} = \ell$, the residue 
degree ${\sf f} = f$ of $p$ in $F_\ell/\Q$, and ${\sf R}$.

\footnotesize
\smallskip
\begin{verbatim}
{p=7;B=5*10^3;el=1;while(el<B,el=el+2*p;if(isprime(el)!=1,next);g=znprimroot(el);
h=g^p;g=lift(g);h=lift(h);P=polcyclo(el);z=Mod(x,P);Q=1;e=1;for(k=1,p,Tr=0;e=e*g;
for(j=1,(el-1)/p,e=e*h;e=lift(Mod(e,el));Tr=Tr+z^e);Q=Q*(T-Tr));
Q=component(lift(Q),1);R=0; for(i=0,p,C=component(Q,i+1);C=lift(Mod(C,p));
R=R+x^i*C);F=znorder(Mod(p,el));f=1;v=valuation(F,p);w=valuation(el-1,p);
if(w==v,f=p);print("el=",el," f=",f," R=",R))}

el=29    f=7     R=x^7 + x^6 + 2*x^5 + 5*x + 1
el=43    f=1     R=x^7 + x^6 + 3*x^5 + 3*x^3 + 6*x^2
el=71    f=7     R=x^7 + x^6 + 5*x^5 + 3*x^4 + 2*x^3 + 6*x^2 + 4
el=4943  f=7     R=x^7 + x^6 + 3*x^5 + x^4 + x^3 + 3*x + 5
el=4957  f=7     R=x^7 + x^6 + 4*x^5 + 2*x^4 + 5*x^3 + 3*x^2 + 2*x + 1
el=4999  f=7     R=x^7 + x^6 + 4*x^3 + 5*x^2 + 2*x + 6
\end{verbatim}

\normalsize
\smallskip
It is hopeless to write wide lists of polynomials $R_\ell$ for 
large $p$, but any experiment suggests a random distribution 
of the (non-independent) coefficients (except that of $x^{p-1}$ 
since ${\rm Tr}_{\Q(\xi_\ell)/\Q}(\xi_\ell) =-1$).
For $p=3$ the six possible polynomials are of the form $R_\ell$.
For $p=5$ (resp. $p=7$) there are $150$ (resp. $17192$) possible polynomials.

\smallskip
\quad (i) For $p=5$, we obtain the following end of the calculations 
(two days of computer; it seems that only $35$ distinct polynomials 
$R_\ell$ are available):

\footnotesize
\smallskip
\begin{verbatim}
el=5591   f=5     R=x^5 + x^4 + 4*x^3 + x^2 + 4*x + 2
el=6211   f=1     R=x^5 + x^4 + x^3 + x^2 + x
el= 6271  f=1     R=x^5 + x^4 + 2*x^3 + 4*x^2 + 3*x + 4
el=1345   f=1     R=x^5 + x^4
\end{verbatim}

\normalsize
\smallskip
\quad (ii) For $p=7$, $\ell$ up to $17977$, we get painfully a little 
more than $250$ distinct $R_\ell$, but the exact number is unknown.

\begin{remark} It is clear that a large number of polynomials $R_\ell$ 
strengthens Vandiver's conjecture since the corresponding 
${\bf J}(\ell) = \psi^{-c}(c)\,{\rm g}_c(\ell)$ cover sufficiently possibilities modulo $p$, 
especially since  we know that the $\F_p$-rank associated to the family of 
$\big({\bf J}(\ell)\big)_{\ell \in {\mathscr L}_p}$ is probably always
$p-4$, but these informations are not ``equivalent''. 
Moreover, an assumption about the order of magnitude of 
${\mathscr N}_p := \order \{R_\ell, \ \ell \in {\mathscr L}_p\}$
{\it is not necessary} to obtain Vandiver's conjecture for $p$;
indeed, a {\it single} suitable $\ell$ may ensure a positive test for 
Vandiver's conjecture as shown by the table given in \S\,\ref{exist}.
\end{remark}

We propose the following heuristic, about the sets
${\mathscr E}_\ell(p)$ of exponents of $p$-primarity
for which the reference \cite{GZ} may be usefull:

\smallskip
\begin{pushright}
{\it The probability of 
${\mathscr E}_\ell(p) = \ev$, for a single $\ell \in {\mathscr L}_p$, 
is $(1+o(1)) \cdot e^{-\frac{1}{2}}$; that of at least a 
counterexample to Vandiver's conjecture is of the form
$O(1)\,\big (1 - e^{-\frac{1}{2}} \big)^{{\mathscr N}_p}$, 
where ${\mathscr N}_p := \order \{R_\ell, \ \ell \in {\mathscr L}_p\}$,
with the polynomial $\,R_\ell=\prod_{\ov \sigma \in {\rm Gal}(F_\ell/\Q)} 
\big(x-{\rm Tr}_{\Q(\xi_\ell)/F_\ell} (\xi_\ell)^{\ov \sigma} \big)$
seen in $\F_p[x]$.}
\end{pushright}

\section{Conclusion}
Under these experiments and heuristics, the existence of sets 
${\mathscr E}_\ell(p)$, disjoint from ${\mathscr E}_0(p)$, or 
probably the existence of primes $\ell \in {\mathscr L}_p$ such 
that ${\mathscr E}_\ell(p) = \ev$, may occur conjecturally for all $p$. 
Possibly, our computations in \S\,\eqref{exist} show the existence of 
general properties of the sets ${\mathscr E}_\ell(p)$ coming from the fact 
that {\it all $\ell \in {\mathscr L}_p$ intervene} (and that these primes are probably 
independent), which is a new argument compared with classical ones.
This is strengthened by the computation of the conjugates of the traces 
${\rm Tr}_{\Q(\xi_\ell)/F_\ell} (\xi_\ell)$, as $\ell \in {\mathscr L}_p$ varies
(coefficients of the Gauss sums), the fields $\Q(\mu_\ell)$ 
being, a priori, independent of the arithmetic of $K$.

\begin{remark} 
There are two constraints, for the Gauss and Jacobi  sums that we have 
considered, but they only concern the auxiliary prime numbers 
$\ell \in {\mathscr L}_p$:

\smallskip
\quad (i) The $p$-classes of ideals 
${\mathfrak L} \mid \ell$, $\ell \in {\mathscr L}_p$, are all 
represented with standard densities.

\smallskip
\quad (ii) The ideal factorization of $\tau(\psi)^p$ is related to 
{\it congruences modulo the conjugates of a prime ideal 
${\mathfrak L} \mid \ell$ and is canonical} (this yields 
Stickelberger's theorem and its consequences 
\cite[\S\,15.1]{Wa}, \cite{C,We}, and \cite{Sch} for the 
annihilation of $\Cl_{\chi_0^*}^{(p)}$ with generalizations 
of the Stickelberger ideal).
A similar context is that of the $\ell$-adic $\Gamma$-function of Morita.

\smallskip
However, since we consider characters $\psi$ of order $p$,
the {\it $p$-adic congruential properties} of Gauss sums 
(or Jacobi sums) do not follow any known law (in our opinion, the
classical literature being mute about this).
\end{remark}

These fundamental $p$-adic properties of Gauss sums
may have crucial consequences in various domains since
Vandiver's conjecture is often required; for instance:

\smallskip
In \cite{DP} about the Galois cohomology of Fermat curves, in
\cite{Shu} for the root numbers of the Jacobian varieties of Fermat 
curves, then in several papers on Galois $p$-ramification 
theory as in \cite{McCS,Sha0,Sha1,Sha2}, or \cite{WE1,WE2} in 
relation with modular forms, then in numerous papers and books 
on the theory of deformations of Galois representations 
as in \cite{Be,Me}, Iwasawa's theory context and cyclotomy, 
as in \cite{Col} on Ihara series, \cite{BP} for $\mu$-invariants 
in Hida families, \cite{KW} for the main conjecture of the Iwasawa theory).

\smallskip
Then it may be legitimate to think that all these numerous 
basic congruential aspects are (logically) governing principles 
of a wide part of algebraic number theory, as follows, beyond 
the case of the $p$th cyclotomic field (not to mention all the 
geome\-trical aspects as the theory of elliptic curves where some
analogies can be found, and all the generalizations of the present 
abelian case over a number field $k \ne \Q$):

\smallskip
\begin{pushright}
{\it Gauss and Jacobi sums $\,\too \,$ Hecke Gr\"ossencharacters $\,\too \,$ 
Stickelberger element $\,\too \,$  
$p$-adic $L$-functions $\,\too \,$
Herbrand--Ribet theorem $\,\too \,$ Main Theorem on abe\-lian fields $\,\too \,$ 
annihilation of the $p$-torsion group ${\mathcal T}$ of real 
abelian fields $\,\too \,$ universal isomorphism ${\mathcal T} \simeq 
{\rm H}^2(G_{S_p},\Z_p)^*$  $\,\too \,$ $p$-rationality of fields 
(${\mathcal T} =1$) $\,\too \,$ cohomological obstructions in Galois theory 
$\, \too \ \cdots$}
\end{pushright}

\smallskip 
Which gives again an example of a {\it basic $p$-adic problem}, 
analogous to those we have analysed about deep conjectures: 
Greenberg's conjectures (on Iwasawa theory over totally real fields 
\cite{Gre1} and on representation theory \cite{Gre3}), $p$-rationalities 
of a number field as $p\to\infty$, generalizations of the conjecture of
Ankeny--Artin--Chowla from the conjectural existence of a $p$-adic 
Brauer--Siegel theorem \cite{Gr6}\,$\ldots$

\smallskip
As shown by the evidences given in \S\,\ref{new}, Vandiver's conjecture 
may be justified, for $p \gg 0$, by the Borel--Cantelli 
heuristic, on exceptional features of Gauss sums; but this point of view 
allows cases of failure of the conjecture, which is not satisfactory 
for the theoretical foundations of the above quoted fundamental subjects.

\smallskip
To be optimistic (but not very rigorous), one can 
say that Vandiver's conjecture is true because it holds 
for sufficiently many prime numbers \cite{BH,HHO} since 
probabilities may be in $\ds\frac{O(1)}{p^{\lambda(p)}}$, 
$\lambda(p) \to \infty$. 
In a more serious claim, we can say that Vandiver's conjecture 
holds for almost all prime numbers; the accurate cardinality of the finite set 
of counterexamples ($\ev$ or not) is (in our opinion) not of algebraic nature 
nor enlightened by class field theory, Galois cohomology or Iwasawa's theory, 
but is perhaps accessible by the way of analytical/geometrical techniques or 
depends on a more general hypothetic ``complexity theory'' in number theory.


\end{document}